\documentclass[english,reqno]{amsart}
\usepackage{kotex}
\usepackage{setspace}
\usepackage{amsmath,amssymb,bm}
\usepackage{xcolor}
\usepackage[left=3cm,right=3cm,top=3cm,bottom=3cm,headsep=1.5cm,footskip=1.5cm]{geometry}
\usepackage{amsthm}
\usepackage{mathrsfs}
\usepackage[noadjust]{cite}
\usepackage{enumitem}
	\setlist[itemize]{leftmargin=*}
	\setlist[enumerate]{leftmargin=*}
\usepackage{tikz}
\usetikzlibrary[patterns]
\usepackage{bbm}
\usepackage{stmaryrd}
\usepackage{amsaddr}
\usepackage[bookmarks=false,breaklinks=false,pdfborder={0 0 1},backref=false,colorlinks=false]{hyperref}

\numberwithin{equation}{section}
\numberwithin{figure}{section}
\theoremstyle{plain}
\newtheorem{thm}{Theorem}[section]
\newtheorem{lem}[thm]{Lemma}
\newtheorem{claim}[thm]{Claim}
\newtheorem{cor}[thm]{Corollary}
\theoremstyle{definition}
\newtheorem{defn}[thm]{Definition}
\theoremstyle{remark}
\newtheorem*{notation}{Notation}
\newtheorem{remark}[thm]{Remark}
\newtheorem*{acknowledgement}{Acknowledgement}
\begin{document}
\title[FEP in Higher Dimensions: Recurrent Structure \& Transient Dynamics]{Facilitated Exclusion Process in Higher Dimensions: Recurrent Structure and Transient Dynamics}
\author{Seonwoo Kim, Sanha Lee and Insuk Seo}
\address{S. Kim. Department of Mathematics, Yonsei University, South Korea.}
\email{seonwookim@yonsei.ac.kr}
\address{S. Lee. Department of Mathematical Sciences, Seoul National University, South Korea.}
\email{sanha7139@snu.ac.kr}
\address{I. Seo. Department of Mathematical Sciences and Research Institute of Mathematics,\\ Seoul National University, South Korea.}
\email{insuk.seo@snu.ac.kr}

\begin{abstract}
In this article, we study the facilitated exclusion process (FEP) on the $d$-dimensional discrete torus of side length $N$, with $d\ge2$. For Bernoulli initial data with a fixed density $\rho\in(0,1)$, we first prove that, with high probability, the particle-number sector selected by the initial configuration has a unique active recurrent class if $\rho>1-2^{-d}$ and multiple recurrent classes if $\rho<1-2^{-d}$. We then show that, for sufficiently small $\rho$, the process reaches an absorbing state (a singleton recurrent class) within a poly-logarithmic time in $N$ with high probability. In contrast, for $\rho\in(1/2,3/4)$ and even $N$, we establish a polynomial lower bound on the time required to reach the recurrent set. These results provide strong evidence for a double phase transition, analogous to absorbing-state phase transitions in the contact process, activated random walks, and stochastic sandpiles. To our knowledge, these are the first rigorous estimates for absorption and transient times for the FEP in dimensions two and higher.
\end{abstract}
\maketitle

\tableofcontents{}

\section{\label{sec1}Introduction}
\subsection{The Facilitated Exclusion Process and Main Questions}
The facilitated exclusion process (FEP) is a conservative particle system in which exclusion is supplemented by a local kinetic constraint. On the discrete torus $\mathbb{T}^{d}_{N}=(\mathbb{Z}/N\mathbb{Z})^{d}$ for $d\ge1$, each site is either vacant or occupied by one particle. A particle at $\bm{x}$ jumps to $\bm{x}+\bm{v}$, where $\bm{v}\in\{\pm\bm{e}_{1},\ldots,\pm\bm{e}_{d}\}$, at rate one provided that $\bm{x}+\bm{v}$ is vacant and $\bm{x}-\bm{v}$ is occupied. The particle at $\bm{x}-\bm{v}$ facilitates the jump; see Figure \ref{fig:jump-rule}. This rule preserves the number of particles but can prevent motion even in configurations containing both particles and vacancies. Consequently, at a fixed particle number there may be \emph{several recurrent communicating classes}. These may be singleton classes consisting of absorbing configurations or active classes that support persistent motion.

\begin{figure}[htbp]
\centering
\begin{tikzpicture}[
    x=0.90cm,
    y=0.90cm,
    >=stealth,
    occupied/.style={
        circle, draw, fill=black!65,
        inner sep=0pt, minimum size=7pt
    },
    vacant/.style={
        circle, draw, fill=white,
        inner sep=0pt, minimum size=7pt
    },
    jump/.style={->, semithick},
    description/.style={anchor=west, font=\small}
]
\foreach \yy in {0,-1.1,-2.2} {
    \draw[gray!50] (0,\yy) -- (2,\yy);
    \draw[jump]
        (1.15,{\yy+0.30}) -- (1.90,{\yy+0.30});
}
\node[occupied] at (0,0) {};
\node[occupied] at (1,0) {};
\node[vacant]   at (2,0) {};
\draw[jump]
    (2.6,0) -- node[above,font=\small]{rate $1$} (4.1,0);

\draw[gray!50] (4.7,0) -- (6.7,0);
\node[occupied] at (4.7,0) {};
\node[vacant]   at (5.7,0) {};
\node[occupied] at (6.7,0) {};

\node[description] at (7.3,0)
    {Allowed jump};
\node[vacant]   at (0,-1.1) {};
\node[occupied] at (1,-1.1) {};
\node[vacant]   at (2,-1.1) {};

\node[description] at (2.6,-1.1)
    {Blocked: no facilitating particle};
\node[occupied] at (0,-2.2) {};
\node[occupied] at (1,-2.2) {};
\node[occupied] at (2,-2.2) {};

\node[description] at (2.6,-2.2)
    {Blocked: occupied target};
\foreach \yy in {-1.1,-2.2} {
    \draw[semithick]
        (1.46,{\yy+0.17}) -- (1.62,{\yy+0.43});
}
\node[below,font=\small] at (0,-2.5)
    {$\bm{x}-\bm{v}$};
\node[below,font=\small] at (1,-2.5)
    {$\bm{x}$};
\node[below,font=\small] at (2,-2.5)
    {$\bm{x}+\bm{v}$};

\end{tikzpicture}
\caption{
The directional facilitation rule along three consecutive sites.
Filled and empty circles denote occupied and vacant sites, respectively.
Short arrows indicate proposed particle jumps from $\bm{x}$ to
$\bm{x}+\bm{v}$; a slash marks a blocked jump.
The long arrow shows the configuration change resulting from the
allowed jump, which occurs at rate $1$.
Only one direction is shown; the same rule applies to every
$\bm{v}\in\{\pm\bm{e}_1,\ldots,\pm\bm{e}_d\}$.
}
\label{fig:jump-rule}
\end{figure}

Two fundamental questions regarding FEP arise from this degeneracy: \emph{how does the recurrent decomposition depend on the particle density, and how long does the process take to reach its recurrent set from a typical initial configuration?} The first question concerns the geometry of configurations and the existence of legal paths between them, while the second concerns the stochastic evolution and the time needed to leave the transient states. These questions have been studied extensively in one dimension. In this paper, we investigate both questions in dimensions $d\geq2$.

\subsection{Known Results and Conjectures}
In one dimension, the recurrent structure is determined by the critical density $1/2$ (cf.\ \cite{BESS20}). Consider the FEP on $\mathbb{T}_{N}$ with $K$ particles, where $1\le K<N$.
\begin{itemize}
\item \emph{Supercritical regime.} If $N/2<K<N$, there is a unique active recurrent class, consisting of configurations with no neighboring vacancies.
\item \emph{Subcritical and critical regimes.} If $K\leq N/2$, every recurrent state is absorbing and has no neighboring particles.
\end{itemize}
The main questions in these two regimes are different. In the supercritical regime, a central objective is to describe the macroscopic evolution after the system reaches its unique active recurrent class. The one-dimensional supercritical hydrodynamic limit is governed by a fast diffusion equation, as proved in \cite{BESS20}. For more general initial profiles, the coexistence of frozen and active regions leads to a Stefan problem \cite{BES21}. A boundary-driven version has also been studied in \cite{DCES26}.

In the subcritical regime, the main question is how long the system takes to reach one of its absorbing states. This question is related to absorbing-state phase transitions, as studied in stochastic sandpiles \cite{ST17,VDMZ00}, activated random walks \cite{RS12,Rol20}, and the contact process \cite{Har74,BG90}. To describe the corresponding possible scenario for the one-dimensional FEP, suppose that the initial configuration has a Bernoulli product distribution with a fixed density $\rho\in(0,1/2)$. Here and below, positive constants may depend on $\rho$ and $d$, but not on $N$. A possible finite-volume signature of an absorbing-state phase transition within this interval would be the existence of a density $\rho_{\star}\in(0,1/2)$ such that the absorption time is at most poly-logarithmic in $N$, of order $(\log N)^{a}$ for some $a>0$, when $\rho<\rho_{\star}$, and at least exponential in $N$, of order $e^{bN}$ for some $b>0$, when $\rho>\rho_{\star}$, with high probability in both cases.

Recently, two of the present authors, together with Blondel and Erignoux \cite{BEKL26}, proved that such an absorbing-state phase transition \emph{does not occur} in the one-dimensional FEP. For Bernoulli initial data at every fixed density $\rho\in(0,1/2)$, the absorption time has order $(\log N)^{3}$ with high probability, i.e., with probability tending to $1$ as $N \to \infty$.

In higher dimensions, a dynamical transition is nevertheless expected. Numerical evidence for an absorbing-state phase transition in a related conservative lattice gas is discussed in \cite{ERSS24}. In that model, any occupied nearest neighbor can facilitate a jump, whereas the present model requires a particle at $\bm{x}-\bm{v}$ to facilitate the jump $\bm{x}\to\bm{x}+\bm{v}$. For the directional rule considered here, the role of absorption also requires some care: even at low particle densities, distinct active recurrent classes may coexist. Figure \ref{fig:active-rows} gives a simple example.

\begin{figure}[htbp]
\centering
\begin{tikzpicture}[scale=0.52, >=stealth]

\foreach \shift/\row/\name in {0/2/{(a) $x_2=2$}, 9.5/4/{(b) $x_2=4$}} 
{
  \begin{scope}[xshift=\shift cm]
  \draw[gray!35, line width=0.3pt, step=1]
        (0,0) grid (7,7);
    \foreach \x in {0,...,7} {
      \foreach \y in {0,...,7} {
        \filldraw[fill=white, draw=gray!60, line width=0.3pt] (\x,\y) circle (0.105);
      }
    }
    \foreach \x in {0,1,2,3,5,6,7} {\filldraw[fill=black, draw=black, line width=0.4pt] (\x,\row) circle (0.14); }
    \filldraw[fill=white, draw=black, line width=0.4pt] (4,\row) circle (0.14);
    \draw[->, thick]
        (3.15,{\row+0.4}) -- (3.95,{\row+0.4});
    \node[below, font=\small] at (3.5,-0.4) {\name};
  \end{scope}
}

\end{tikzpicture}
\caption{Two distinct active recurrent classes on a two-dimensional torus with low density. In each panel, one horizontal row is fully occupied except for a single vacancy, and all other sites are vacant. The vacancy can move around its row, while vertical jumps are impossible. Each choice of row therefore gives a different active recurrent class with $N-1$ particles. Here $N=8$, with periodic boundary conditions.}
\label{fig:active-rows}
\end{figure}

More precisely, for an initial Bernoulli product distribution of density $\rho$, the expected picture is as follows.
\begin{enumerate}[label=(\arabic*)]
\item There is a \emph{structural critical density} $\rho^{\rm{st}}_{\star}\in(0,1)$ such that, for $\rho\in(\rho^{\rm{st}}_{\star},1)$, the particle-number sector selected by the initial configuration has a unique active recurrent class with high probability. On this class, one expects a hydrodynamic limit on the diffusive time scale.
\item For $\rho\in(0,\rho_\star^{\rm{st}})$, the particle-number sector selected by the initial configuration contains at least two recurrent classes with high probability. Let $\Sigma^{{\rm rec}}_{N}$ denote the union of all recurrent classes. One expects a further critical density $\rho_{\star}\in(0,\rho^{\rm{st}}_{\star})$ separating two dynamical regimes:
\begin{itemize}
\item \emph{Subcritical regime.} For $\rho\in(0,\rho_{\star})$, the hitting time of $\Sigma^{{\rm rec}}_{N}$ is poly-logarithmic in $N$ with high probability.
\item \emph{Supercritical regime.} For $\rho\in(\rho_{\star},\rho^{\rm{st}}_{\star})$, this hitting time is at least exponential in the system volume, i.e. at least $e^{cN^{d}}$ for some $c>0$, with high probability.
\end{itemize}
We refer to this conjectured change in the transience-time scale as a \emph{dynamical phase transition}, or an absorbing-state phase transition when the recurrent state reached is absorbing.
\end{enumerate}
For the directional FEP in dimensions $d\ge2$, the preceding picture has remained largely unproved. The present article establishes part of this structure and provides evidence for a dynamical transition strictly below the structural threshold.

\subsection{Our Achievements}
We present three groups of results that establish parts of the preceding picture and support its remaining conjectural features. Throughout this discussion, the dimension $d$ is at least two.

\subsubsection*{Main Result 1: Characterization of the Structural Critical Density}
Our first result, Theorem \ref{thm1}, identifies the structural critical density as
\[
\rho^{\rm{st}}_{\star}=1-2^{-d}.
\]
More precisely, when $N$ is even, we have a sharp threshold in the sense that every sector with $\rho^{\rm{st}}_{\star}N^{d}<K<N^{d}$ contains exactly one recurrent class, which is active, whereas every sector with $1\le K \le \rho_\star^{\rm st} N^d$ contains at least two recurrent classes. When $N$ is odd, we prove the same uniqueness for $N^d - ((N-1)/2)^d < K < N^d$ and the nonuniqueness for $1 \le K \le N^d - ((N+1)/2)^d$, which yield the same critical density $\rho_\star^{\rm st}$ as $N\to\infty$. This classification concerns recurrent classes and allows transient configurations outside them. Corollary \ref{cor:bernoulli-structural} gives the corresponding statement for the random sector selected by Bernoulli initial data.

\subsubsection*{Main Result 2: Poly-Logarithmic Hitting Time at Sufficiently Low Density}
We next study the approach to the recurrent set from Bernoulli product initial data. Define the \emph{transience time} by
\[
\tau_{{\rm tr}}:=\inf\bigl\{t\geq0:\eta^{N}_{t}\in\Sigma^{{\rm rec}}_{N}\bigr\}.
\]
Thus $\tau_{{\rm tr}}$ measures the duration of the transient regime; it is zero if the initial configuration is already recurrent. Write $\nu^{N}_{\rho}$ for the product measure under which sites are independently occupied with probability $\rho$. Theorem \ref{thm2} proves that there exists $\rho_{0}(d)\in(0,2^{-d})$ such that, for every fixed $\rho\in(0,\rho_{0}(d))$ and some constants $c,c'>0$,
\[
c\log N<\tau_{{\rm tr}}<c'(\log N)^{3d+1}
\]
with probability tending to one as $N\to\infty$. The configuration reached at $\tau_{{\rm tr}}$ is also absorbing with high probability. Hence a typical system at sufficiently low density freezes after a poly-logarithmic transient period. The upper bound and the high-probability absorption statement extend to initial distributions stochastically dominated by $\nu^{N}_{\rho}$ for some fixed $\rho\in(0,\rho_{0}(d))$; see Remark \ref{rem:thm2}.

\subsubsection*{Main Result 3: A Non-Poly-Logarithmic Hitting Time Lower Bound}
Theorem \ref{thm3} establishes a different behavior at intermediate densities. For every fixed $\rho\in(1/2,3/4)$,
\[
\tau_{{\rm tr}}>\frac{cN}{\log N}
\]
with probability tending to one as $N\to\infty$ along even integers. In particular, the transient regime lasts longer than $N^{\alpha}$ for every fixed $\alpha\in(0,1)$, with high probability. Since $3/4\leq\rho^{{\rm st}}_{\star}$ for every $d\geq2$, this entire interval lies strictly below the structural threshold. In two dimensions its upper endpoint is exactly $\rho^{{\rm st}}_{\star}=3/4$. Thus long transience occurs throughout an interval of fixed densities where recurrent classes still coexist. At these same densities, the one-dimensional process instead reaches its unique active recurrent class within a poly-logarithmic time \cite{BESS20}.

\subsubsection*{Ideas of the Proofs}
The proofs use three complementary geometric and probabilistic arguments. For the structural transition, an elementary counting argument shows that every configuration above density $1-2^{-d}$ contains a fully occupied elementary box with $2^{d}$ sites (see \eqref{eq:box-def} for its definition). Such boxes provide the facilitation needed for a reachability construction leading to a common recurrent class. Periodic vacancy constraints supply the obstructions to uniqueness below the threshold. This relates the structural critical density to the local geometry of the allowed jumps.

At low density, the main tool is an \emph{island decomposition}. An island is a region such that, along any legal evolution, particles cannot cross its boundary and neighboring sites on opposite sides of the boundary cannot both be occupied. Conditional on the initial configuration, the dynamics on distinct islands can therefore be treated independently. We prove that, for sufficiently small $\rho$, every minimal island has at most $c(\log N)^{d}$ sites with high probability. The key geometric step shows that a large island forces the presence of a large connected set with an unusually high initial particle density, an event excluded by counting connected sets and applying large deviation estimates. On each island, a spatial second-moment argument gives an expected absorption time bounded by the cube of its size, together with an exponential tail bound. Taking the maximum over the islands yields the upper bound $c(\log N)^{3d+1}$.

For the intermediate-density lower bound, we first establish a deterministic criterion for transience: for even $N$, if every coordinate line parallel to $\bm{e}_{1}$ has particle density strictly between $1/2$ and $3/4$, then the configuration is transient. The proof constructs a legal path, slice by slice, into the closed set of configurations in which all sites with even first and second coordinates are vacant. The initial line-density condition places the starting configuration outside this closed set, so the path proves that it cannot be recurrent. We then show that the stochastic evolution satisfies this criterion at time $cN/\log N$ with high probability. A graphical localization estimate controls the range of dependencies, while concentration of the resulting local variables keeps all line densities close to $\rho$. Together, these estimates turn the deterministic criterion into a lower bound on the transience time.

These results establish separated density regimes for rapid absorption and prolonged transience. Determining whether they are separated by a single sharp dynamical critical density remains an open problem. The optimal transience-time scales and the recurrent classes selected by Bernoulli initial data outside the low-density regime also remain to be understood. In the conjectured dynamical supercritical regime, we expect the transience time to be at least exponential in $N^d$, and more specifically conjecture the volume-exponential scale described above.

\section{\label{sec2}Main Results}

\begin{notation}
The following notation is assumed throughout the article.
\begin{itemize}
\item $c,c'>0$ are constants that are independent of $N$ and may vary line by line.

\item A statement holds \emph{with high probability} if its probability tends to $1$ as $N\to\infty$.

\item $a_{N}=O(b_{N})$ if $|a_{N}|\le cb_{N}$ for all $N\ge1$.

\item $a_{N}=\Theta(b_{N})$, or $a_{N}\asymp b_{N}$ if $a_{N}=O(b_{N})$ and $b_{N}=O(a_{N})$.

\item $a_{N}=o(b_{N})$ if $\lim_{N\to\infty}a_{N}/b_{N}=0$.

\item $a_{N}\simeq b_{N}$ if $\lim_{N\to\infty}a_{N}/b_{N}=1$.

\item $\lfloor\gamma\rfloor$ and $\lceil\gamma\rceil$ denote the greatest integer $\le\gamma$ and the least integer $\ge\gamma$, respectively.

\item $\llbracket\gamma,\gamma'\rrbracket:=[\gamma,\gamma']\cap\mathbb{Z}$, $\gamma\wedge\gamma':=\min\{\gamma,\gamma'\}$, and $\gamma\vee\gamma':=\max\{\gamma,\gamma'\}$.

\item $\mathbb{N}:=\{1,2,\dots\}$ and $\mathbb{N}_{0}:=\{0\}\cup\mathbb{N}$.
\end{itemize}
\end{notation}

\subsection{Model}
Consider a $d$-dimensional discrete torus of side length $N$, $\mathbb{T}^{d}_{N}=\{0,1,\dots,N-1\}^{d}$ for $d\ge1$. We represent each element as $\bm{x}=(x_{1},\dots,x_{d})\in\mathbb{T}^{d}_{N}$. We write $\bm{x}\sim\bm{y}$ if $\bm{x},\bm{y}\in\mathbb{T}^{d}_{N}$ are nearest neighbors in $\mathbb{T}^{d}_{N}$. Denote by $\bm{e}_{k}$ the $k$-th positive unit vector and collect all unit vectors as 
\begin{equation}
\mathcal{V}=\{\pm\bm{e}_{1},\dots,\pm\bm{e}_{d}\}.\label{eq:V-def}
\end{equation}
The configuration state space is defined as $\Sigma_{N}:=\{0,1\}^{\mathbb{T}^{d}_{N}}$ where the value $0$ (resp. $1$) indicates that the site is empty (resp. occupied). For $\eta=(\eta(\bm{x}))_{\bm{x}\in\mathbb{T}^{d}_{N}}\in\Sigma_{N}$ and $\bm{y},\bm{z}\in\mathbb{T}^{d}_{N}$, let $\eta^{\bm{y},\bm{z}}$ be obtained by exchanging the occupations at $\bm{y}$ and $\bm{z}$. The \emph{facilitated exclusion process} (FEP) $\{\eta^{N}_{t}\}_{t\ge0}$ is defined via an infinitesimal stochastic generator 
\begin{equation}
\mathcal{L}_{N}f(\eta)=\sum_{\bm{x}\in\mathbb{T}^{d}_{N}}\sum_{\bm{v}\in\mathcal{V}}\eta(\bm{x}-\bm{v})\eta(\bm{x})(1-\eta(\bm{x}+\bm{v}))\left(f(\eta^{\bm{x},\bm{x}+\bm{v}})-f(\eta)\right).\label{eq:FEP-def}
\end{equation}
Note that under the process the number of particles is conserved. In this regard, denote by $\Sigma_{N,K}$ the configuration space with $K\in\llbracket1,N^{d}-1\rrbracket$ particles: 
\[
\Sigma_{N,K}:=\{\eta\in\Sigma_{N}:|\eta|=K\},\qquad\text{where}\quad|\eta|:=\sum_{\bm{x}\in\mathbb{T}^{d}_{N}}\eta(\bm{x}).
\]
Here, we exclude the trivial cases $K=0$ and $K=N^{d}$. Throughout the article, an \emph{irreducible component} means a closed irreducible communicating class, or equivalently a recurrent class. Denote by $\mathbb{P}^{N}_{\mu}$ the law of the FEP trajectories with initial distribution $\mu$. If the process starts from a configuration $\eta\in\Sigma_{N}$, we write $\mathbb{P}^{N}_{\eta}$ instead of $\mathbb{P}^{N}_{\delta_{\eta}}$.

For our probabilistic results, we assume that the process starts from a Bernoulli product measure $\nu_{\rho}=\nu^{N}_{\rho}$ on $\Sigma_{N}$, which is defined as 
\begin{equation}
\nu^{N}_{\rho}:=\bigotimes_{\bm{x}\in\mathbb{T}^{d}_{N}}\nu_{\bm{x}}\qquad\text{with}\quad\nu_{\bm{x}}(1)=1-\nu_{\bm{x}}(0)=\rho.\label{eq:nu-rho-def}
\end{equation}
Here, $\rho\in(0,1)$ is a positive constant which controls the global particle density of the system. Indeed, a standard large deviations estimate implies that for any $\epsilon>0$, 
\[
\nu^{N}_{\rho}\left(\eta:\frac{|\eta|}{N^{d}}\notin(\rho-\epsilon,\rho+\epsilon)\right)\to0\qquad\text{exponentially as}\quad N\to\infty.
\]
Thus, according to $\nu^{N}_{\rho}$ the global density is concentrated near $\rho$.

\subsubsection*{Phase Transition for $d=1$}
It is well known \cite{BESS20} that the one-dimensional FEP undergoes the following phase transition at a critical density value $\rho_{\star}=1/2$.

\begin{itemize}
\item If $K\le N/2$, then the space $\Sigma_{N,K}$ is decomposed into $\Sigma^{{\rm tr}}_{N,K}$ and $\Sigma^{{\rm abs}}_{N,K}$, where 
\begin{align*}
\Sigma^{{\rm tr}}_{N,K} & =\{\eta\in\Sigma_{N,K}:\eta(x)=\eta(x+1)=1\quad\text{for some}\enspace x\in\mathbb{T}_{N}\},\\
\Sigma^{{\rm abs}}_{N,K} & =\{\eta\in\Sigma_{N,K}:\eta(x)\eta(x+1)=0\quad\text{for all}\enspace x\in\mathbb{T}_{N}\},
\end{align*}
such that each state in $\Sigma^{{\rm tr}}_{N,K}$ is transient, whereas each state in $\Sigma^{{\rm abs}}_{N,K}$ is absorbing.

\item If $K>N/2$, then $\Sigma_{N,K}$ is decomposed into $\Sigma^{{\rm tr}}_{N,K}$ and $\Sigma^{{\rm act}}_{N,K}$ where 
\begin{align*}
\Sigma^{{\rm tr}}_{N,K} & =\{\eta\in\Sigma_{N,K}:\eta(x)=\eta(x+1)=0\quad\text{for some}\enspace x\in\mathbb{T}_{N}\},\\
\Sigma^{{\rm act}}_{N,K} & =\{\eta\in\Sigma_{N,K}:\eta(x)+\eta(x+1)\ge1\quad\text{for all}\enspace x\in\mathbb{T}_{N}\},
\end{align*}
such that each state in $\Sigma^{{\rm tr}}_{N,K}$ is transient, whereas $\Sigma^{{\rm act}}_{N,K}$ is the unique active irreducible component. Here, we say that an irreducible component of a Markov chain is \emph{active} if it contains at least two elements.

\end{itemize}
Thus, the system undergoes a structural phase transition at $\rho_{\star}=1/2$. The critical value is determined by a representative configuration $\eta_{\star}\in\Sigma_{N,K}$ where $N$ is even and $K=N/2$, which is defined as 
\begin{equation}
\eta_{\star}(x)=\begin{cases}
1 & \text{if}\quad x\enspace\text{is even},\\
0 & \text{if}\quad x\enspace\text{is odd}.
\end{cases}\label{eq:ex-1D}
\end{equation}
Notice that no pairs of particles are adjacent in $\eta_{\star}$, thus $\eta_{\star}$ is an absorbing state.

The approach to the recurrent set also exhibits critical slowing down near the structural
threshold $1/2$. Starting from an initial Bernoulli $\nu^{N}_{\rho}$ with $\rho\in(0,1/2)$, the FEP system visits an absorbing state in a poly-logarithmic time scale of $\Theta(\log^{3}N)$ \cite{BEKL26}, and then stays frozen. If the initial density $\rho$ is close to criticality, i.e., if the initial distribution is uniform over all configurations whose constant particle density $\rho=\rho_N$ satisfies $1/2-\rho\simeq N^{-a}$ where $a\in(0,1)$ is a fixed constant, then the absorption time lies in $(N^{2\wedge(4a)-\epsilon},N^{2\wedge(4a)+\epsilon})$ with high probability for any fixed $\epsilon>0$ \cite{BEKL26}, thereby exhibiting a polynomial scale. Finally, if the system starts from Bernoulli $\nu^{N}_{\rho}$ with $\rho\in(1/2,1)$, it enters the single ergodic phase $\Sigma^{{\rm act}}_{N,K}$ within a poly-logarithmic time $O(\log^{32}N)$ \cite{BESS20}, and then converges to the stationary state of $\Sigma^{{\rm act}}_{N,K}$ as time diverges by a hydrodynamic mechanism. For further details on this topic, see \cite{BESS20,BES21,DCES26} and the references therein.

\subsubsection*{Phase Transitions for $d\ge2$}
The main objective of this article is to identify the structural critical density in dimensions two or higher and to establish distinct dynamical regimes below it. Our results support the conjecture that the structural and dynamical transitions occur at \emph{different} critical densities. The geometric mechanisms underlying this picture can be illustrated by the following two examples $\eta_{\star1},\eta_{\star2}\in\Sigma_{N}=\{0,1\}^{\mathbb{T}^{d}_{N}}$ for even $N$, which generalize $\eta_{\star}$ in \eqref{eq:ex-1D} in two different ways. Define 
\begin{equation}
\eta_{\star1}(\bm{x})=\begin{cases}
1 & \text{if at least one of}\enspace x_{1},\dots,x_{d}\enspace\text{is even},\\
0 & \text{if}\quad x_{1},\dots,x_{d}\enspace\text{are all odd},
\end{cases}\quad\eta_{\star2}(\bm{x})=\begin{cases}
1 & \text{if}\quad x_{1}+\cdots+x_{d}\enspace\text{is even},\\
0 & \text{if}\quad x_{1}+\cdots+x_{d}\enspace\text{is odd}.
\end{cases}\label{eq:ex-dD}
\end{equation}
See the first (for $\eta_{\star1}$) and the second (for $\eta_{\star2}$) configurations in Figure \ref{fig1} for $d=2$ and $N=12$. It is easy to verify that $\eta_{\star1},\eta_{\star2}$ are absorbing states; however, the reasons why they are absorbing are completely different. The configuration $\eta_{\star2}$ is absorbing since there are no pairs of neighboring particles, whereas $\eta_{\star1}$ is absorbing due to the following reason. If there were an allowed particle jump in $\eta_{\star1}$ as $\bm{x}\to\bm{x}+\bm{v}$ for $\bm{x}\in\mathbb{T}^{d}_{N}$ and $\bm{v}\in\mathcal{V}$, by definition we have $\eta_{\star1}(\bm{x}+\bm{v})=0$, thus all $x_{i}+v_{i}$, $i\in\llbracket1,d\rrbracket$, must be odd. This implies that all $x_{i}-v_{i}$ are also odd, thus $\eta_{\star1}(\bm{x}-\bm{v})=0$, which contradicts the fact that the jump $\bm{x}\to\bm{x}+\bm{v}$ is admissible.

Notice that the density of $\eta_{\star1}$ is $1-2^{-d}$, whereas the density of $\eta_{\star2}$ is $1/2$. As one can guess from the third and fourth configurations in Figure \ref{fig1}, these distinct mechanisms may coexist and thereby produce a highly complicated structure of absorbing states.

Another interesting point is the existence of non-unique active recurrent classes—a phenomenon not observed in one dimension—which can be explained by the last configuration in Figure \ref{fig1}. Here, the configuration is not absorbing since the empty site (marked with $\star$) on the region of gray rows and columns can be filled up by a legal jump of a gray particle. However, this empty site can visit all sites in the gray region but cannot escape it, thus the collection of all such configurations, with the same gray region and red particles with a single empty site on the gray region, becomes an irreducible component. It is clear that this irreducible component is not unique at the same density level, since one can translate the whole component by $+\bm{e}_{1}$ to obtain another irreducible class.

\begin{figure}[htbp]
\centering
\begin{tikzpicture}[scale=0.30,
  graydot/.style={draw=black,fill=black!30!white,line width=0.35pt},
  reddot/.style={draw=black,fill=red!50!white,line width=0.35pt}]
\foreach \dx/\dy/\name in {
  0/0/{(a) $\eta_{\star1}$},14/0/{(b) $\eta_{\star2}$},
  28/0/{(c)},7/-15/{(d)},21/-15/{(e)}} {
  \begin{scope}[shift={(\dx,\dy)}]
    \draw[gray!45,line width=0.25pt] (0,0) grid (12,12);
    \node[below,font=\small] at (6,-0.6) {\name};
  \end{scope}
}
\begin{scope}
\foreach \i in {0,2,4,6,8,10} {\foreach \j in {0,...,11}
  \filldraw[graydot] (\i+0.5,\j+0.5) circle (0.3);}
\foreach \i in {1,3,5,7,9,11} {\foreach \j in {0,2,4,6,8,10}
  \filldraw[graydot] (\i+0.5,\j+0.5) circle (0.3);}
\end{scope}
\begin{scope}[shift={(14,0)}]
\foreach \i in {0,2,4,6,8,10} {\foreach \j in {0,2,4,6,8,10}
  \filldraw[reddot] (\i+0.5,\j+0.5) circle (0.3);}
\foreach \i in {1,3,5,7,9,11} {\foreach \j in {1,3,5,7,9,11}
  \filldraw[reddot] (\i+0.5,\j+0.5) circle (0.3);}
\end{scope}
\begin{scope}[shift={(28,0)}]
\foreach \i in {0,4,10} {\foreach \j in {0,...,11}
  \filldraw[graydot] (\i+0.5,\j+0.5) circle (0.3);}
\foreach \j in {2,9} {\foreach \i in {0,...,11}
  \filldraw[graydot] (\i+0.5,\j+0.5) circle (0.3);}
\foreach \i/\j in {2/0,2/5,2/7,6/0,6/4,7/6,8/5,8/7,8/11}
  \filldraw[reddot] (\i+0.5,\j+0.5) circle (0.3);
\end{scope}
\begin{scope}[shift={(7,-15)}]
\foreach \j in {0,4,10} {\foreach \i in {0,...,11}
  \filldraw[graydot] (\i+0.5,\j+0.5) circle (0.3);}
\foreach \i/\j in {2/2,7/2,10/2,1/6,4/6,6/6,9/6,
  2/7,5/7,11/7,0/8,4/8,7/8,9/8}
  \filldraw[reddot] (\i+0.5,\j+0.5) circle (0.3);
\end{scope}
\begin{scope}[shift={(21,-15)}]
\foreach \i in {0,1,2,4,5,6,7,8,9,11} {\foreach \j in {3,10}
  \filldraw[graydot] (\i+0.5,\j+0.5) circle (0.3);}
\foreach \j in {0,1,2,4,5,6,7,8,9,10,11}
  \filldraw[graydot] (3.5,\j+0.5) circle (0.3);
\foreach \j in {0,...,11}
  \filldraw[graydot] (10.5,\j+0.5) circle (0.3);
\foreach \i/\j in {0/8,1/5,5/7,6/1,7/5,8/0,8/6,8/8}
  \filldraw[reddot] (\i+0.5,\j+0.5) circle (0.3);
\node[font=\small,inner sep=0pt] at (3.5,3.5) {$\star$};
\end{scope}
\end{tikzpicture}
\caption{Configurations $\eta_{\star1}$ and $\eta_{\star2}$,
  two examples of absorbing configurations combining the two mechanisms, and a configuration
  belonging to an active irreducible component, respectively.}
\label{fig1}
\end{figure}

\subsection{Structural Phase Transition for Higher-Dimensional FEPs}
Our first main result states that a sharp structural phase transition occurs at the criticality  
\begin{equation}
\rho^{{\rm st}}_{\star}:=1-\frac{1}{2^{d}}.\label{eq:rho-star-st}
\end{equation}

\begin{thm}
\label{thm1}Suppose that $d\ge2$ and let $K$ and $N$ be positive integers such that $1\le K<N^{d}$.
\begin{enumerate}
\item If $K$ and $N$ satisfy 
\[
K > \begin{cases}
\rho^{{\rm st}}_{\star}N^{d} & \text{if}\quad N\enspace\text{is even},\\
N^{d}-\left(\frac{N-1}{2}\right)^{d} & \text{if}\quad N\enspace\text{is odd},
\end{cases}
\]
then there is exactly one irreducible component in $\Sigma_{N,K}$, and it is active.
\item If $K$ and $N$ satisfy 
\[
K\le\begin{cases}
\rho^{{\rm st}}_{\star}N^{d} & \text{if}\quad N\enspace\text{is even},\\
N^{d}-\left(\frac{N+1}{2}\right)^{d} & \text{if}\quad N\enspace\text{is odd},
\end{cases}
\]
then $\Sigma_{N,K}$ contains at least two irreducible components.
\end{enumerate}
\end{thm}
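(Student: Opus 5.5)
We prove the two parts separately. Part (1) is a counting argument followed by a reachability construction. Part (2) rests on closed sets defined by forced vacancies.

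\emph{Part (2).} Let $O\subset\mathbb{T}^{d}_{N}$ be the set of sites all of whose coordinates are odd when $N$ is even, and all of whose coordinates are even, using representatives in $\llbracket0,N-1\rrbracket$, when $N$ is odd. Then $|O|=(N/2)^{d}$ or $((N+1)/2)^{d}$ respectively. Define $A_{O}:=\{\eta\in\Sigma_{N,K}:\eta\equiv0\text{ on }O\}$.

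I claim $A_{O}$ is closed under the dynamics.
\begin{itemize}
\item A particle never leaves $O$, since sites of $O$ are vacant.
\item Suppose a jump $\bm{x}\to\bm{x}+\bm{v}$ lands in $O$. As in the argument given for $\eta_{\star1}$, we have $\bm{x}-\bm{v}\in O$, so the facilitating site is vacant. This is a contradiction.
\item For odd $N$ there is an extra boundary case, because $0$ and $N-1$ are adjacent and both even. Here one checks that either $\bm{x}-\bm{v}\in O$, or $\bm{x}$ itself lies in $O$ and is therefore vacant. For example, $x_{i}+v_{i}=0$ with $\bm{v}=\bm{e}_{i}$ forces $x_{i}=N-1$, which is even.
\end{itemize}

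By translation invariance of the generator, every translate $A_{O+\bm{s}}$ with $\bm{s}\in\{0,1\}^{d}$ is also closed. Each translate is nonempty exactly when $K\le N^{d}-|O|$, which is the hypothesis of part (2). Any nonempty closed set of a finite chain contains a recurrent class.

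Suppose, for contradiction, that $\Sigma_{N,K}$ had a unique recurrent class $R$. Then $R\subset A_{O+\bm{s}}$ for every $\bm{s}$. Hence every $\eta\in R$ vanishes on $\bigcup_{\bm{s}}(O+\bm{s})$. This union is all of $\mathbb{T}^{d}_{N}$: for odd $N$, the even residues together with their shifts by $1$ cover $\mathbb{Z}/N\mathbb{Z}$. So $\eta\equiv0$, contradicting $K\ge1$. Therefore there are at least two recurrent classes.

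\emph{Part (1), counting step.} Partition the torus (for even $N$) into the $N^{d}/2^{d}$ disjoint elementary boxes $2\bm{z}+\{0,1\}^{d}$. For odd $N$, instead pack $((N-1)/2)^{d}$ disjoint elementary boxes. Under the hypothesis, the number of vacancies is smaller than the number of boxes. By pigeonhole, every $\eta\in\Sigma_{N,K}$ contains a fully occupied elementary box $B$.

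\emph{Part (1), reachability step.} The plan is to show that from every $\eta\in\Sigma_{N,K}$ one can reach a fixed reference configuration $\eta_{\rm ref}$ by legal jumps. Then $\eta_{\rm ref}$ lies in every recurrent class, so there is only one. It is active because $\eta_{\rm ref}$ can be chosen with a vacancy adjacent to a full box, and hence is not absorbing. For $\eta_{\rm ref}$ I would take the vacancies packed in a fixed region far from a fixed full box.

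The basic mechanism is the following. If a vacancy sits at $\bm{y}+\bm{v}$ with $\bm{y}\in B$ and $\bm{y}-\bm{v}\in B$ (possible since $B$ has side $2$), then $\bm{y}$ jumps into it. Iterating such moves, I would establish three things:
\begin{enumerate}
\item a vacancy can be drawn into $B$ and moved among the $2^{d}$ sites of $B$ using the remaining $2^{d}-1$ particles;
\item by refilling from the other side, the full box can be translated by one step, so it acts as a mobile facilitator;
\item using the mobile box, vacancies can be carried one at a time along paths to their target positions.
\end{enumerate}

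\emph{Main obstacle.} I expect the main difficulty to be this transport lemma, specifically moving the full box through regions that contain other vacancies without getting stuck. I would first try a lemma of the following form. If $B$ is full and $\bm{v}$ is a direction, then by legal moves one reaches a configuration in which $B+\bm{v}$ is full and only sites of $B\cup(B+\bm{v})$ have changed. This should follow by sweeping the vacancies of $B+\bm{v}$ into $B$ and then out the back. Because there are fewer than $2^{-d}N^{d}$ vacancies, a path for the box avoiding dense vacancy clusters should exist, and the counting slack is used again here if needed.
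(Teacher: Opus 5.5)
Your Part (2) and the pigeonhole step of Part (1) are correct and essentially match the paper. The paper uses the all-even sublattice $\mathbb{A}^{d}_{N}$ and its translates $\mathbb{A}^{d}_{N}+\bm a$, $\bm a\in\{0,1\}^{d}$; for even $N$ your all-odd set is one of these translates, and the wrap-around check for odd $N$ is the same. To find a full box, the paper averages over all $N^{d}$ boxes when $N$ is even and packs disjoint boxes when $N$ is odd.

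\textbf{The gap.} It lies in the reachability step of Part (1), which carries the whole theorem. That step is only sketched, and its ingredients fail as stated.

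(i) A jump between two sites of an elementary box $B$ is always facilitated from outside $B$. If $\bm x,\bm x+\bm v\in B$, then $\bm x-\bm v\notin B$, since $B$ has side $2$. So a vacancy cannot be moved around inside $B$ ``using the remaining $2^{d}-1$ particles''. This depends entirely on the outer neighbours of $B$.

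(ii) The same defect breaks your translation lemma. Take $d=2$, $B=\{0,1\}^{2}$, $\bm v=\bm e_{1}$, with $(2,0),(2,1)$ vacant. The only jumps inside $B\cup(B+\bm e_{1})$ whose facilitator also lies inside it are $(1,y)\to(2,y)$, and after them none remain. So whether the box can advance depends on sites outside $B\cup(B+\bm v)$. In fact, in $\mathbb{Z}^{d}$ a lone full box cannot advance even one step. A particle leaving an elementary box leaves its facilitator inside, so $k$ particles visit at most $2^{d}k$ sites along any legal path, whereas a box that advances once could advance indefinitely. The hypothesis $K>\rho^{{\rm st}}_{\star}N^{d}$ does not exclude large vacant regions. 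Routing around dense clusters does not help either, because your scheme must bring the box next to every vacancy it transports.

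(iii) Your target is also problematic. After a jump $\bm x\to\bm x+\bm v$, the vacated site is flanked by the occupied sites $\bm x\pm\bm v$, and every elementary box containing $\bm x$ contains one of them. Hence a fully vacant elementary box can never be created. If $2^{d}$ or more vacancies are packed into a solid block, $\eta_{\rm ref}$ cannot be reached from any configuration whose vacancies are isolated on the all-odd sublattice. The target must be chosen inside the recurrent class, and your plan gives no way to do this.

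\textbf{The missing idea.} It is the content of the paper's Lemmas \ref{lem3.3} and \ref{lem3.4}: a potential that forces the surroundings of the box to be occupied.

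Let $\mathbb{E}^{d}_{N}$ be the set of sites not all of whose coordinates are odd. Take $\xi$ reachable from $\eta$ that maximizes the number of particles on $\mathbb{E}^{d}_{N}$. Then every $\mathbb{E}^{d}_{N}$-neighbour of a full box $B$ is occupied in $\xi$. Otherwise, pick a vacant one whose entry point in $B$ is closest to the unique all-odd site $\bm b$ of $B$. The vacancy can then be walked to $\bm b$, because each external facilitator needed is an $\mathbb{E}^{d}_{N}$-neighbour of $B$ that is occupied by this minimality. This raises the count on $\mathbb{E}^{d}_{N}$, contradicting maximality.

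The other new sites of $B+\bm e_{j}$ are $\mathbb{E}^{d}_{N}$-neighbours of $B$, hence already occupied. So translating $B$ by $\bm e_{j}$ requires at most filling the new all-odd corner $\bm b+2\bm e_{j}$. The double push $\bm b+\bm e_{j}\to\bm b+2\bm e_{j}$, then $\bm b\to\bm b+\bm e_{j}$, facilitated by $\bm b$ and $\bm b-\bm e_{j}$, does this. It changes no occupation on $\mathbb{E}^{d}_{N}$, so maximality persists. Walking the box next to a vacant $\bm o\in\mathbb{E}^{d}_{N}$ then gives a contradiction.

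Hence every configuration reaches $\Gamma_{N,K}$, the set of configurations identically $1$ on $\mathbb{E}^{d}_{N}$. This set is irreducible. The same double push moves a particle from an all-odd site $\bm x$ to a vacant all-odd site $\bm x+2\bm v$, which realizes simple exclusion on the half-size torus (an open box for odd $N$). A target set that is reachable from everywhere and internally irreducible is exactly what your plan lacks.
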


The previous result implies that we have the following structural phase transition for the Bernoulli initial data. 

\begin{cor} 
\label{cor:bernoulli-structural}
Fix $d\ge2$ and $\rho\in(0,1)$. Let $\eta_0\sim\nu^{N}_{\rho}$ and set $K_N:=|\eta_0|$.
\begin{enumerate}[label=(\arabic*)]
\item If $\rho>\rho^{{\rm st}}_{\star}$, with high probability, the sector $\Sigma_{N,K_N}$ has exactly one recurrent class, which is active.
\item If $\rho<\rho^{{\rm st}}_{\star}$, with high probability, the sector $\Sigma_{N,K_N}$ has at least two recurrent classes.
\end{enumerate}
\end{cor}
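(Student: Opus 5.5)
The corollary follows from Theorem~\ref{thm1} combined with a law of large numbers for $K_N=|\eta_0|$. Under $\nu^{N}_{\rho}$, $K_N$ is a sum of $N^d$ independent Bernoulli$(\rho)$ variables. By the large deviation estimate recalled after \eqref{eq:nu-rho-def}, for any fixed $\epsilon>0$ we have $\rho-\epsilon<K_N/N^{d}<\rho+\epsilon$ with probability tending to one, in fact exponentially fast. It then suffices to check that, for $N$ large and $\epsilon$ small, every $K$ in the window $((\rho-\epsilon)N^d,(\rho+\epsilon)N^d)$ satisfies the hypotheses of the appropriate part of Theorem~\ref{thm1}, for both parities of $N$.

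For part (1), assume $\rho>\rho^{\rm st}_\star$ and choose $\epsilon=(\rho-\rho^{\rm st}_\star)/2$. If $N$ is even, $K_N>(\rho-\epsilon)N^d>\rho^{\rm st}_\star N^d$ directly. If $N$ is odd, note that
\[
N^{-d}\Bigl(N^{d}-\bigl(\tfrac{N-1}{2}\bigr)^{d}\Bigr)=1-2^{-d}(1-1/N)^{d}\longrightarrow\rho^{\rm st}_\star,
\]
so this threshold is below $(\rho-\epsilon)N^d$ for all $N$ large. We must also ensure $K_N<N^d$. This holds because $\nu^N_\rho(K_N=N^d)=\rho^{N^d}\to0$, and more simply because $K_N<(\rho+\epsilon)N^d<N^d$ once $\epsilon<1-\rho$. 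Shrinking $\epsilon$ if needed handles this. On this event, Theorem~\ref{thm1}(1) gives a unique irreducible (recurrent) class, and it is active.

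For part (2), assume $\rho<\rho^{\rm st}_\star$ and take $\epsilon=\min\{(\rho^{\rm st}_\star-\rho)/2,\rho/2\}$, so that $1\le K_N$ with high probability. Then $K_N<(\rho+\epsilon)N^d$. This is at most $\rho^{\rm st}_\star N^d$ for even $N$, and at most $N^d-((N+1)/2)^d$ for odd large $N$, since that threshold divided by $N^d$ equals $1-2^{-d}(1+1/N)^d\to\rho^{\rm st}_\star$. Theorem~\ref{thm1}(2) then yields at least two irreducible components.

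The only point requiring care is the odd-$N$ thresholds, which are handled by their convergence to $\rho^{\rm st}_\star$ as above. Since the bounds hold separately along even and odd $N$ with uniform probability estimates, the convergence to one holds along all $N$. This step is routine, and I expect no real obstacle.
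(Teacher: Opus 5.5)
Your proposal is correct and matches the paper's proof: concentration of $K_N/N^{d}$ around $\rho$, followed by Theorem \ref{thm1}, using that the normalized odd-$N$ thresholds $1-2^{-d}(1\mp 1/N)^{d}$ converge to $\rho^{\rm st}_\star$. Your explicit choice of $\epsilon$ to ensure $1\le K_N<N^{d}$ spells out a point the paper states only briefly, and it is handled correctly.
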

\begin{proof}
Under $\nu^{N}_{\rho}$, the random variable $K_N$ has the binomial distribution with parameters $N^{d}$ and $\rho$. In particular, $K_N/N^{d}\to\rho$ in probability, and $1\le K_N<N^{d}$ with probability tending to one. For $\rho>\rho^{{\rm st}}_{\star}$, the first conclusion follows directly from Theorem \ref{thm1}-(1), along with the fact that for odd $N$,
\[
1- \left( \frac{N-1}{2N} \right)^d \xrightarrow{N \to \infty} 1- 2^{-d} = \rho_\star^{\rm st}.
\]
For $\rho<\rho^{{\rm st}}_{\star}$, the normalized upper threshold in Theorem \ref{thm1}-(2) equals $\rho^{{\rm st}}_{\star}$ for even $N$, while for odd $N$ it is
\[
1-\left(\frac{N+1}{2N}\right)^{d} \xrightarrow{N \to \infty} 1-2^{-d}=\rho^{{\rm st}}_{\star}.
\]
The same concentration estimate therefore allows us to apply Theorem \ref{thm1}-(2) for both parities.
\end{proof}
The corollary describes the recurrent structure of the random particle-number sector selected by the initial configuration. It does not assert that the initial configuration is recurrent, nor does it determine which recurrent class the dynamics eventually enters.

\subsection{Dynamical Regimes for Higher-Dimensional FEPs}
As our second and third main results, we reveal two distinct dynamical regimes below the structural critical density $\rho^{{\rm st}}_{\star}=1-2^{-d}$. For Bernoulli initial data at sufficiently low densities, the process is absorbed within a poly-logarithmic time with high probability. In contrast, for every $\rho\in(\frac{1}{2},\frac{3}{4})$ and even $N$, the transience time exceeds $cN/\log N$ with high probability.

To see this, let us denote by $\Sigma^{{\rm rec}}_{N}$ the collection of all recurrent states and by $\Sigma^{{\rm tr}}_{N}$ the collection of all transient states, such that $\Sigma_{N}=\Sigma^{{\rm rec}}_{N}\cup\Sigma^{{\rm tr}}_{N}$. Then, further decompose $\Sigma^{{\rm rec}}_{N}$ into 
\[
\Sigma^{{\rm rec}}_{N}=\Sigma^{{\rm abs}}_{N}\cup\Sigma^{{\rm act}}_{N},
\]
where $\Sigma^{{\rm abs}}_{N}$ collects the absorbing states and $\Sigma^{{\rm act}}_{N}$ is the union of active irreducible components. We denote by $\tau_{{\rm tr}}=\tau^{N}_{{\rm tr}}$ the first hitting time of the recurrent set $\Sigma^{{\rm rec}}_{N}$, i.e., 
\[
\tau_{{\rm tr}}=\tau^{N}_{{\rm tr}}:=\inf\left\{ t\ge0:\eta^{N}_{t}\in\Sigma^{{\rm rec}}_{N}\right\} .
\]
We call $\tau_{{\rm tr}}$ the \emph{transience time} of the system. Mind that $\tau_{{\rm tr}}=0$ if the system already starts from a configuration in $\Sigma^{{\rm rec}}_{N}$.

First, the transience time $\tau_{{\rm tr}}$ is poly-logarithmic if the initial density $\rho$ is sufficiently small. Recall from \eqref{eq:nu-rho-def} that $\nu^{N}_{\rho}$ denotes the Bernoulli product measure on $\Sigma_{N}$.

\begin{thm}
\label{thm2}For $d\ge2$, there exists $\rho_{0}=\rho_{0}(d)\in(0,2^{-d})$ such that for all $\rho\in(0,\rho_{0})$, 
\[
\lim_{N\to\infty}\mathbb{P}^{N}_{\nu_{\rho}}\left[c\log N<\tau_{{\rm tr}}<c'(\log N)^{3d+1},\quad\eta^{N}_{\tau_{{\rm tr}}}\in\Sigma^{{\rm abs}}_{N}\right]=1,
\]
for some constants $c=c(\rho,d),c'=c'(\rho,d)>0$. In words, starting from initial density $\rho\in(0,\rho_{0})$, the process gets \emph{absorbed} within a poly-logarithmic time with high probability.
\end{thm}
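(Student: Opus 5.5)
The plan follows the island decomposition sketched in the introduction. It has three parts: a deterministic geometric reduction, a probabilistic bound on island sizes at low density, and an absorption-time estimate on each island. The lower bound $c\log N$ is handled separately by a simple local argument.

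\textbf{Lower bound.} For the bound $\tau_{\rm tr}>c\log N$, I would first find many disjoint local witness patterns. A natural choice is a horizontal pair of adjacent particles with all other sites within distance $3$ vacant. Each such pattern occurs with probability at least $\rho^{2}(1-\rho)^{C}$. Among $\asymp N^{d}$ disjoint windows, with high probability at least $N^{d}/2$ of them contain the pattern. Inside such a window the configuration is not absorbing. The same holds for the whole configuration, since a recurrent state reached at low density is shown below to be absorbing. The point is that an isolated adjacent pair must still make at least one jump before absorption. That jump needs a Poisson clock ring, and the clocks in different windows are independent up to the time the window is influenced from outside. I would make this precise by choosing windows whose surrounding region is also empty. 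Then nothing enters the window before its own clock rings. Each witness survives to time $t$ with probability at least $e^{-2dt}$. With $t=c\log N$ and $c<d/(2\cdot 2d)$, the expected number of surviving witnesses is at least $N^{d}e^{-2dt}\to\infty$, and independence across windows gives that at least one survives with high probability. Hence $\tau_{\rm tr}>c\log N$.

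\textbf{Islands.} I would define an island as a set $I\subset\mathbb{T}^d_N$ that is invariant along every legal evolution from $\eta_0$: particles never cross $\partial I$, and no two neighbours on opposite sides of $\partial I$ are ever both occupied. Under these conditions the generator splits into independent dynamics on the islands. A boundary pair with one side empty for all time cannot facilitate a crossing. To construct minimal islands, I would take connected components of $\{\eta_0=1\}$ and iteratively merge components whose $\ell^\infty$-neighbourhoods of a fixed radius $r$ interact. The radius $r$ is chosen because each jump moves a particle by one site, and an island of $k$ particles can spread by at most $O(k)$. The key geometric lemma I would aim for is the following: if a minimal island has diameter or volume at least $L$, then there is a $*$-connected set $A$ with $|A|\ge L$ containing at least $\delta|A|$ initially occupied sites, for some $\delta=\delta(d)>0$.

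The number of connected sets of size $m$ containing a given site is at most $C_d^{m}$. The Chernoff bound $P(\mathrm{Bin}(m,\rho)\ge\delta m)\le(e\rho/\delta)^{\delta m}$ then gives, after a union bound, that every island has size at most $c(\log N)^{d}$ with high probability, provided $\rho<\rho_0$ is small enough that $C_d(e\rho/\delta)^{\delta}<1$. Here $(\log N)^{d}$ is the volume bound, obtained from a diameter bound of order $\log N$. I expect this geometric lemma to be the main obstacle. One must show that a sparse initial configuration cannot, through legal moves, grow an interacting cluster beyond a constant multiple of its particle count. I would try to prove it by showing that a finite particle set of size $k$ can only ever occupy sites within distance $k$ of its initial hull. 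The reason is that a jump requires a facilitating particle directly behind, so a cluster advances only by consuming its own particles.

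\textbf{Absorption on each island.} On an island of size $M$, I would use a spatial second-moment Lyapunov function, $\Phi(\eta)=\sum_{x}\eta(x)|x-\bar x|^{2}$, or the sum of pairwise squared distances. I would check that every allowed jump increases $\Phi$ in expectation by at least $c>0$ whenever the island is not absorbing: facilitated jumps push particles away from their facilitator. Since $\Phi\le M^{3}$ (volume times squared diameter), optional stopping gives expected absorption time $O(M^{3})$. Repeating over blocks of length $2cM^{3}$ yields the tail bound $P(\tau>sM^{3})\le e^{-cs}$. At low density all reached states are absorbing, because an active class needs a fully occupied line, which does not fit in a small island. With $M\le c(\log N)^{d}$ we get $M^{3}\le c(\log N)^{3d}$. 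Taking $s=C\log N$ and a union bound over at most $N^{d}$ islands gives $\tau_{\rm tr}<c'(\log N)^{3d+1}$, and the state reached at $\tau_{\rm tr}$ is absorbing, all with high probability.
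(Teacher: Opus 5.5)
Your architecture matches the paper's: islands, a lattice-animal/Chernoff bound on island size, a quadratic Lyapunov function on each island with a union bound, and independent local witnesses for the $c\log N$ lower bound. However, the step you call the main obstacle is left unproved, namely that a large minimal island forces a large connected set of positive initial density, and the route you sketch cannot deliver it. A fixed merging radius $r$ does not produce islands, because a cluster of $k$ particles can travel a distance of order $k$. If the radius instead grows with the particle count, your path-independent bound (``everything stays within distance $O(k)$ of the initial hull'') controls only a region of volume of order $k^d$. It also cannot rule out cascades: a merged cluster with $k$ particles may capture one more particle at distance of order $k$, the enlarged cluster another at distance of order $k+1$, and so on. The $*$-connected set made of the clusters and their connecting segments then has of order $n^{2}$ sites but only $n$ particles. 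So no lower bound $\delta(d)>0$ on its density follows, and the Chernoff/lattice-animal estimate has nothing to act on. Two things are missing: a bound linear in \emph{volume} that holds with all interactions present, and a way to handle the fact that the island is a union over many different legal paths.

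The paper gets this bound path by path. Fix a legal path $\omega$ from $\eta$ and a connected component $C$ of the set of island sites occupied at some time along $\omega$. Particles never leave $C$. Moreover, every elementary box $\mathcal{B}_{\bm{x}}$ meeting $C$ still contains a particle of $C$ at the end of $\omega$: if the last such particle left by $\bm{z}\to\bm{z}+\bm{v}$, its facilitator $\bm{z}-\bm{v}$, which also lies in $C$, would remain in the box. Since each site lies in $2^d$ boxes, $|C|\le 2^d\sum_{\bm{z}\in C}\eta(\bm{z})$. Minimality shows that the island is the union of these components over all paths, since never-occupied sites could be split off. Hence the island is a finite union of connected sets of density at least $2^{-d}$. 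Lemma \ref{l2} then extracts a connected sub-union with no triple intersections whose projection onto some axis has length at least $|I|^{1/d}$ (or $N/2$). This sub-union has at least $\frac12|I|^{1/d}$ sites and density at least $2^{-d-1}$, which the lattice-animal bound excludes once $|I|>(2c_1\log N)^d$.

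Two smaller repairs are also needed.
\begin{itemize}
\item A single facilitated jump can decrease $\Phi$, for example when the right end of a run lying to the left of the reference point moves right. The drift bound must therefore come from pairing the two outward end-jumps of each maximal occupied run of length $\ell\ge2$, which together add $2\ell$. This bound, applied on the island embedded in $\mathbb{Z}^d$, also shows directly that every recurrent state of a small island is absorbing. That replaces your unproved claim that active classes need a full line.
\item In the lower bound, outside particles can enter your window, but this does not matter. The pair's two particles move only via the $4d$ clocks at their own sites, so the pair survives on an event of probability $e^{-4dt}$ that is independent across windows. Only a constant fraction of windows contains the pattern, not $N^d/2$. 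An intact pair rules out absorption because its line cannot be full inside an island with fewer than $N$ sites.
\end{itemize}
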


\begin{remark}[Stochastically dominated initial distributions]\label{rem:thm2} Careful reading of our argument reveals that the upper bound and absorption conclusion of Theorem \ref{thm2} also hold for any sequence of initial distributions $\mu_N$ stochastically dominated by $\nu^{N}_{\rho}$ for some fixed $\rho\in(0,\rho_0(d))$. Here stochastic domination refers to the coordinatewise order on configurations. More precisely, for a constant $C=C(\rho,d)>0$,
\[
\lim_{N\to\infty}\mathbb{P}^{N}_{\mu_N}\left[
\tau_{{\rm tr}}<C(\log N)^{3d+1},\quad
\eta^{N}_{\tau_{{\rm tr}}}\in\Sigma^{{\rm abs}}_{N}
\right]=1.
\]
Indeed, the event that there exists a large connected set with the initial density bound used in Lemma \ref{l3} is increasing in the initial configuration. Its probability under $\mu_N$ is therefore no larger than under $\nu^{N}_{\rho}$. A minimal island containing no particles is a singleton. The deterministic island decomposition and the uniform absorption estimates in Section \ref{sec4} therefore give the conclusion without a positive lower bound on the initial density. This argument uses domination of the initial distributions, rather than a monotone coupling of the evolutions. The logarithmic lower bound does not extend under stochastic domination alone: the point mass at the empty configuration has $\tau_{{\rm tr}}=0$.
\end{remark}

Next, if the initial density $\rho$ lies in between $1/2$ and $3/4\le\rho^{{\rm st}}_{\star}=1-2^{-d}$, the transience time is at least polynomial, under the restriction that $N$ is even.

\begin{thm}
\label{thm3}Suppose that $d\ge2$. For all $\rho\in(1/2,3/4)$, 
\[
\lim_{\substack{N\to\infty\\
N\ \text{is even}
}
}\mathbb{P}^{N}_{\nu_{\rho}}\left[\tau_{{\rm tr}}>\frac{cN}{\log N}\right]=1,
\]
for some constant $c=c(\rho,d)>0$. In particular, for every $\alpha\in(0,1)$, the transience time exceeds $N^{\alpha}$ with high probability as $N \to \infty$ through even values.
\end{thm}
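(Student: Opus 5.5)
We follow the strategy sketched in the introduction. The argument has a deterministic part and a probabilistic part.

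\textbf{Deterministic transience criterion.} Fix even $N$. Let $\mathcal{C}$ be the set of configurations in which every site $\bm{x}$ with $x_1,x_2$ both even is vacant. We first check that $\mathcal{C}$ is closed under the dynamics. Suppose a jump $\bm{x}\to\bm{x}+\bm{v}$ lands on such a site. Then $\bm{x}-\bm{v}$ also has both first coordinates even, because $\bm{v}$ changes one coordinate by $\pm2$ relative to $\bm{x}+\bm{v}$ and $N$ is even. So $\bm{x}-\bm{v}$ is vacant and cannot facilitate the jump.

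The criterion we prove is the following: if every line parallel to $\bm{e}_1$ has particle density in $(1/2,3/4)$, then $\eta$ is transient. We argue as follows.
\begin{itemize}
\item The upper bound $3/4$ on line densities gives, in each line, enough vacancies.
\item The lower bound $1/2$ guarantees two adjacent particles in every line, hence facilitation along $\pm\bm{e}_1$.
\item Moving slice by slice in $x_2$, we construct a legal path from $\eta$ into $\mathcal{C}$, rearranging particles so that the sites with $x_1,x_2$ even become vacant.
\item The constraint only asks that one quarter of the sites in each $(x_1,x_2)$-plane be vacant. So we expect that roughly, on the even-$x_2$ lines, particles are pushed onto odd $x_1$, with the surplus transferred vertically to odd-$x_2$ lines using facilitated vertical jumps.
\item A line density above $1/2$ in an even-$x_2$ line means $\eta\notin\mathcal{C}$, since fewer than half of that line's sites are allowed to be occupied in $\mathcal{C}$.
\end{itemize}
If $\eta$ were recurrent, its class would have to contain the configuration reached in $\mathcal{C}$. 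Since $\mathcal{C}$ is closed, it would then be contained in $\mathcal{C}$, contradicting $\eta\notin\mathcal{C}$. The combinatorial construction of this path is the main obstacle. I would first do it for $d=2$ and then treat the extra coordinates as spectators.

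\textbf{Probabilistic part.} At $t=cN/\log N$ we show that all $N^{d-1}$ line densities stay in $(1/2,3/4)$ with high probability.
\begin{itemize}
\item \emph{Localization.} Use the graphical construction with Poisson clocks. The value $\eta_t(\bm{x})$ depends only on initial data and clocks within distance $L\asymp N/\log N$, except on an event of probability at most $e^{-cL}$. This holds because a chain of clock rings of length $L$ by time $t\le L/(Ce)$ has probability at most $(et/L)^L$.
\item \emph{Mean.} By translation invariance, $\mathbb{E}\eta_t(\bm{x})=\rho$.
\item \emph{Concentration.} Replace each $\eta_t(\bm{x})$ on a given line by a localized version depending only on a box of radius $L$. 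These variables are independent when their boxes are at distance greater than $2L$. Split the line into $O(L)$ residue classes of sites spaced $3L$ apart; each class has about $N/L\asymp\log N$ independent terms. Hoeffding gives a deviation probability of $e^{-c\,\delta^{2}N/L}$ per class.
\item \emph{Tuning.} The constant $c$ in $t=cN/\log N$ must be small enough that this bound beats the union bound over the $N^{d}$ lines and residue classes, i.e.\ $c\delta^{2}N/L\ge (d+1)\log N$. This is exactly why the time scale is $N/\log N$.
\end{itemize}

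\textbf{Conclusion.} With high probability, $\eta_s$ satisfies the criterion for every $s\le t$. For times other than $t$, we union bound over a mesh of times spaced $N^{-d-2}$ apart and control the few jumps occurring between mesh points. The criterion then implies that $\eta_s\notin\Sigma^{\rm rec}_N$ for all $s\le t$, so $\tau_{\rm tr}>t$.
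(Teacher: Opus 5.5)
Your probabilistic half is essentially the paper's argument: graphical localization at radius $r\asymp N/\log N$, the identity $\mathbb{E}\,\eta_t(\bm x)=\rho$ from translation invariance and particle conservation, and concentration along each $\bm e_1$-line by splitting the centers into classes whose localization boxes are disjoint. The paper combines the $4r+1$ classes by H\"older's inequality rather than a union bound, which is an inessential difference. The genuine gap is the deterministic criterion, which is the heart of the theorem. You describe it only heuristically and flag it yourself as the main obstacle. The legal path into $\mathcal C$ is never constructed, and the ingredients it needs are not the ones your bullets name.

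Concretely, the paper's construction rests on four points.
\begin{itemize}
\item[(i)] ``Two adjacent particles in every line'' is not enough. What is needed is the one-dimensional fact that, for $N/2<k<N$, every $k$-particle line configuration reaches the class $\mathcal E_{N,k}$ of configurations with no two adjacent vacancies, and that this class is irreducible. Then horizontal jumps alone can place a vacancy or a particle at any prescribed column.
\item[(ii)] Each donor row $2i$ is paired with a receiver row $2i+1$. Arrange the donor so that all $N/2$ odd columns and exactly $b=k_{\rm d}-N/2$ even columns are occupied, then freeze it. Next move $h=N-k_{\rm r}$ donor particles, including all $b$ even-column ones, vertically into the receiver. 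The bound $3/4$ enters only through the pairwise capacity condition $b\le h$, i.e.\ $k_{\rm d}+k_{\rm r}\le 3N/2$. The lower bound is used for $h<k_{\rm d}$, via $k_{\rm d}+k_{\rm r}>N$.
\item[(iii)] Each vertical jump $(x,2i)\to(x,2i+1)$ needs a facilitator at $(x,2i-1)$. This is secured by processing pairs in the order $i=0,1,\dots,N/2-1$, so that row $2i-1$ is already a completely filled receiver. The base case $i=0$ is handled by horizontally rearranging the still-unprocessed row $N-1$.
\item[(iv)] The donor must not be rearranged after its initial arrangement, since its count may drop to $N/2$ or below, where the one-dimensional dynamics is no longer ergodic. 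The receiver, by contrast, keeps a count in $(N/2,N)$ and no adjacent vacancies, so it can be rearranged before each transfer to open a vacancy at the required column.
\end{itemize}
Without these, ``push even-row particles onto odd columns and transfer the surplus vertically'' is not yet a proof that the required moves are legal. A small correction: exactly half, not fewer than half, of an even row's sites are allowed in $\mathcal C$. It is the strict inequality $k/N>1/2$ that forces $\eta\notin\mathcal C$.

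Your time-mesh conclusion can be made to work, but it is unnecessary. Since $\Sigma^{{\rm rec}}_N$ is a union of closed classes, $\{\tau_{\rm tr}\le t\}=\{\eta_t\in\Sigma^{{\rm rec}}_N\}$ almost surely for deterministic $t$. So it suffices to verify the criterion at the single time $t=cN/\log N$. This avoids the polynomially large union bound over mesh times and the control of jumps between them.
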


\begin{remark}
We conjecture that, for every $d \ge 2$ and $\rho \in (1/2 , \rho_\star^{\rm st} )$, there exists a constant $c=c(d,\rho)>0$ such that
\begin{equation}\label{eq:conjecture}
\lim_{N\to\infty} \mathbb{P}^{N}_{\nu_{\rho}}\left[\tau_{{\rm tr}} > e^{cN^d} \right]=1.
\end{equation}
Together with Theorem \ref{thm2}, this would establish a stronger separation between the two
dynamical regimes: poly-logarithmic transience times at sufficiently low densities and transience times at
least exponential in the volume $N^d$ throughout $(1/2, \rho_\star^{\rm st} )$. In this connection, we discuss three potential improvements of Theorem \ref{thm3}.
\begin{itemize}
\item In the intermediate regime $\rho\in(1/2,\rho_\star^{\rm st})$, we expect the process started from $\nu_\rho$ to typically reach a recurrent configuration containing on the order of $N$ fully occupied $(d-1)$-dimensional submanifolds, each of size $N^{d-1}$. Informal simulations for $d = 2$ and $N = 100$ suggest this geometric picture in two dimensions. We expect that forming such a global structure requires exponentially long times, which motivates \eqref{eq:conjecture}. Our proof, however, relies on a local (de)localization coupling (cf. Lemma \ref{lem:plb-localization}) that gives a meaningful bound only up to polynomial time scales. Establishing the conjectured exponential lower bound would therefore require a better understanding of the global transition mechanism from $\nu_\rho$ to such recurrent configurations.
\item For $d\ge3$, we establish the polynomial lower bound only on the smaller interval $(1/2,3/4)\subset(1/2,\rho_\star^{\rm st})$. This restriction arises because the transient path constructed in Lemma \ref{lem:plb-transient} relies on ergodicity properties of the dynamics on one-dimensional submanifolds. A better understanding of the recurrent/transient structure of the $(d-1)$-dimensional FEP could allow us to extend the polynomial lower bound throughout the intermediate regime $(1/2,\rho_\star^{\rm st})$.
\item The evenness assumption in Theorem \ref{thm3} arises from the present construction: within each
two-dimensional slice, the proof pairs alternating even and odd rows as donors and receivers. This
pairing requires $N$ to be even. We expect an analogous criterion, and hence the conclusion of Theorem \ref{thm3}, to hold for odd $N$ as well. Establishing such a criterion poses a separate combinatorial problem, which we do not pursue here.
\end{itemize}

\end{remark}

\subsubsection*{Organization of the Paper}
The remainder of the paper is devoted to the proofs of the three main results. Section \ref{sec3} proves Theorem \ref{thm1} by constructing recurrent classes below the structural threshold and establishing uniqueness above it. Section \ref{sec4} develops the island decomposition and proves the low-density absorption estimates of Theorem \ref{thm2}. Section \ref{sec5} proves Theorem \ref{thm3} by combining a deterministic transience criterion with graphical localization and concentration of line densities.

\section{\label{sec3}Structural Phase Transition and Proof of Theorem \ref{thm1}}

\subsection{\label{sec3.1}Above the Structural Threshold: A Unique Recurrent Class}
In this subsection, we prove part (1) of Theorem \ref{thm1}.

For $\bm{x}\in\mathbb{T}^{d}_{N}$, let us call 
\begin{equation}
\mathcal{B}_{\bm{x}}:=\prod^{d}_{i=1}\{x_{i},x_{i}+1\}\label{eq:box-def}
\end{equation}
the elementary box anchored at $\bm{x}$. Note that $|\mathcal{B}_{\bm{x}}|=2^{d}$. We start with a lemma.

\begin{lem}
\label{lem3.2}Recall \eqref{eq:rho-star-st}. If $K>\rho^{{\rm st}}_{\star}N^{d}$, then for any $\eta\in\Sigma_{N,K}$ there exists $\bm{x}\in\mathbb{T}^{d}_{N}$ such that $\eta=1$ identically on $\mathcal{B}_{\bm{x}}$.
\end{lem}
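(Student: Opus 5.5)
The plan is a double-counting (pigeonhole) argument on the incidences between vacancies and elementary boxes. Fix $\eta\in\Sigma_{N,K}$ and suppose, for contradiction, that no elementary box is fully occupied. That is, for every anchor $\bm{x}\in\mathbb{T}^{d}_{N}$ there is some $\bm{y}\in\mathcal{B}_{\bm{x}}$ with $\eta(\bm{y})=0$. Consider the incidence count
\[
I:=\sum_{\bm{x}\in\mathbb{T}^{d}_{N}}\sum_{\bm{y}\in\mathcal{B}_{\bm{x}}}(1-\eta(\bm{y})).
\]
Under the contradiction hypothesis each of the $N^{d}$ inner sums is at least $1$, so $I\ge N^{d}$.

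Next I will compute $I$ exactly by swapping the order of summation. A site $\bm{y}$ belongs to $\mathcal{B}_{\bm{x}}$ if and only if $x_{i}\in\{y_{i},y_{i}-1\}$ for every $i$. For $N\ge2$ these are $2^{d}$ distinct anchors, and this is the only place the torus structure needs care. The degenerate case $N=1$ is vacuous, since then $1\le K<N^{d}=1$ is impossible. Hence every vacancy is counted exactly $2^{d}$ times, so
\[
I=2^{d}\bigl(N^{d}-K\bigr).
\]
For the same reason, each box $\mathcal{B}_{\bm{x}}$ consists of $2^{d}$ distinct sites, which the paper already notes.

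Combining the two facts gives $N^{d}\le 2^{d}(N^{d}-K)$, that is, $K\le(1-2^{-d})N^{d}=\rho^{\rm st}_{\star}N^{d}$. This contradicts the assumption $K>\rho^{\rm st}_{\star}N^{d}$, so some box $\mathcal{B}_{\bm{x}}$ is fully occupied. The argument does not use the parity of $N$. The only step requiring any attention is verifying that, for $N\ge2$, the map $\bm{x}\mapsto\mathcal{B}_{\bm{x}}$ covers each site exactly $2^{d}$ times on the torus, including $N=2$, where $y_{i}-1=y_{i}+1$ but the two anchor values $y_{i}$ and $y_{i}-1$ are still distinct. I therefore expect no genuine obstacle.
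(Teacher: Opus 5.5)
Your proof is correct and is essentially the paper's double-counting argument. The paper bounds the number of particles in each box by $2^{d}-1$, while you bound the number of vacancies below by $1$; these are complementary forms of the same inequality. Your remark that each site lies in exactly $2^{d}$ boxes for every $N\ge2$, including $N=2$, is a fair check that the paper leaves implicit.
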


\begin{proof}
Fix $\eta\in\Sigma_{N,K}$ and suppose the contrary that $\eta$ is not identically $1$ on any $\mathcal{B}_{\bm{x}}$, $\bm{x}\in\mathbb{T}^{d}_{N}$. This implies that 
\[
2^{d}-1\ge\sum_{\bm{y}\in\mathcal{B}_{\bm{x}}}\eta(\bm{y})\qquad\text{for every}\quad\bm{x}\in\mathbb{T}^{d}_{N}.
\]
 Summing over all $N^d$ elementary boxes and using the fact that each site belongs to exactly $2^d$ such boxes, we obtain the following inequality.
\[
N^{d}(2^{d}-1)=\sum_{\bm{x}\in\mathbb{T}^{d}_{N}}(2^{d}-1)\ge\sum_{\bm{x}\in\mathbb{T}^{d}_{N}}\sum_{\bm{y}\in\mathcal{B}_{\bm{x}}}\eta(\bm{y})=\sum_{\bm{y}\in\mathbb{T}^{d}_{N}}2^{d}\eta(\bm{y})=2^{d}K,
\]
thus we conclude that $K\le\rho^{{\rm st}}_{\star}N^{d}$, which contradicts the assumption.
\end{proof}
First, assume that $N$ is even. Define
\[
\mathbb{O}^{d}_{N}  :=\left\{ \bm{x}\in\mathbb{T}^{d}_{N}:x_{1},x_{2},\dots,x_{d}\enspace\text{are all odd}\right\},
\qquad \mathbb{E}^{d}_{N}  := \mathbb T_N^d \setminus \mathbb O_N^d .
\]
Let us write $\eta\rightsquigarrow\xi$ if $\xi$ is reachable from $\eta$ by a finite number of legal jumps.

\begin{lem}
\label{lem3.3}For any $\eta\in\Sigma_{N,K}$ where $N$ is even and $K>\rho^{{\rm st}}_{\star}N^{d}$, there exists $\xi\in\Sigma_{N,K}$ with $\eta\rightsquigarrow\xi$ such that $\xi=1$ identically on $\mathbb{E}^{d}_{N}$.
\end{lem}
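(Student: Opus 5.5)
The plan is to run a monotone algorithm on the potential $\Phi(\eta):=\sum_{\bm{y}\in\mathbb{E}^{d}_{N}}\eta(\bm{y})$, the number of occupied sites of $\mathbb{E}^{d}_{N}$. Since $|\mathbb{E}^{d}_{N}|=(1-2^{-d})N^{d}<K$, the target $\xi$ is any configuration with $\Phi(\xi)=|\mathbb{E}^{d}_{N}|$, and such configurations exist in $\Sigma_{N,K}$. It therefore suffices to prove the following claim. \emph{Whenever $\Phi(\eta)<|\mathbb{E}^{d}_{N}|$, there is $\eta'$ with $\eta\rightsquigarrow\eta'$ and $\Phi(\eta')>\Phi(\eta)$.} Iterating the claim terminates after at most $|\mathbb{E}^{d}_{N}|$ rounds. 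Note that $\rightsquigarrow$ preserves $K$, so Lemma \ref{lem3.2} applies to every intermediate configuration: at each round we always have a fully occupied elementary box $\mathcal{B}_{\bm{x}}$. Since $N$ is even, every such box contains exactly one site of $\mathbb{O}^{d}_{N}$ and $2^{d}-1$ sites of $\mathbb{E}^{d}_{N}$.

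To prove the claim, I would track vacancies rather than particles. A legal jump $\bm{z}\to\bm{z}+\bm{v}$ moves a vacancy from $\bm{w}=\bm{z}+\bm{v}$ to $\bm{w}-\bm{v}$, and this is allowed exactly when $\bm{w}-\bm{v}$ and $\bm{w}-2\bm{v}$ are both occupied. Hence a vacancy can slide along a coordinate line through a run of at least two consecutive particles. I would take a vacancy at some $\bm{e}\in\mathbb{E}^{d}_{N}$ and try to move it into $\mathbb{O}^{d}_{N}$. Each step of the vacancy either keeps $\Phi$ unchanged (an $\mathbb{E}\to\mathbb{E}$ or $\mathbb{O}\to\mathbb{O}$ swap) or changes it by $\pm1$. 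The goal is a path whose net effect is a single $\mathbb{E}\to\mathbb{O}$ vacancy transfer, with no intermediate $\mathbb{O}\to\mathbb{E}$ transfers, or with such transfers compensated later.

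The intended mechanism uses the full box as a mobile facilitator. First I would show that a full box can be translated by one lattice step in a chosen direction $\bm{u}$ at the cost of a controlled rearrangement. Concretely: if the face of $\mathcal{B}_{\bm{x}}$ opposite to $\bm{u}$ has a vacancy adjacent to it in direction $-\bm{u}$, then the pairs of occupied sites in the box facilitate pulling that vacancy through the box along $\bm{u}$. This is because along direction $\bm{u}$, each line through the box contains two consecutive occupied sites. A full box placed next to a vacancy at $\bm{e}\in\mathbb{E}^{d}_{N}$ lets the vacancy enter the box and then be routed, using only within-box moves, to the unique $\mathbb{O}$-site of the box. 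That site lies at distance at most $d$ inside the box, and the line segments inside the box provide the needed facilitating pairs. If the vacancy enters at an $\mathbb{E}$-site of the box, this route yields one net $\mathbb{E}\to\mathbb{O}$ transfer, so $\Phi$ increases by one.

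The main obstacle is bringing the vacancy and a full box together without decreasing $\Phi$ along the way, since a full box may be far from any $\mathbb{E}$-vacancy. My first attempt would be an extremal choice: pick a pair (full box $\mathcal{B}_{\bm{x}}$, vacancy at $\bm{e}\in\mathbb{E}^{d}_{N}$) minimizing the $\ell^{1}$-distance. I would then argue, by a counting argument in the spirit of Lemma \ref{lem3.2} applied locally, that the sites between them are occupied densely enough for the vacancy to slide toward the box one coordinate at a time. If an intermediate step would force an $\mathbb{O}\to\mathbb{E}$ transfer, I would instead show that the new configuration has a strictly closer (box, $\mathbb{E}$-vacancy) pair with $\Phi$ unchanged, and conclude by induction on this distance. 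I expect this local geometry, in particular handling vacancies that sit diagonally to the box and cannot be reached by a single straight slide, to require the most care.
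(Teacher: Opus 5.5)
Your framework is the same as the paper's: a monotone increase of $\Phi$, with a full box supplied by Lemma \ref{lem3.2} at every round. However, there is a genuine gap in two places.

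\textbf{The within-box routing claim is false as stated.} A box has side length $2$. When the vacancy moves from a box site $\bm{u}$ to an adjacent box site $\bm{u}'$ (the particle at $\bm{u}'$ jumps to $\bm{u}$), the required facilitator $2\bm{u}'-\bm{u}$ lies \emph{outside} the box, beyond $\bm{u}'$. Only the entry move from the outside site $\bm{y}$ to $\bm{u}$ is facilitated from inside, by $2\bm{u}-\bm{y}$. The same objection applies to your box-translation mechanism: after its first step, pulling a vacancy through the box along $\bm{u}$ needs an occupied site beyond the far face, and nothing guarantees one.

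The routing step can be repaired by an extremal choice. Among vacant $\bm{y}\in\mathbb{E}^{d}_{N}$ adjacent to the box, with box neighbor $\bm{u}$, choose one minimizing the number of coordinates in which $\bm{u}$ differs from the box's unique $\mathbb{O}^{d}_{N}$-site $\bm{b}$. Then route along a coordinate-monotone path to $\bm{b}$. Each outside facilitator needed is an $\mathbb{E}^{d}_{N}$-neighbor of the box attached to a box site strictly closer to $\bm{b}$, hence occupied by minimality.

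\textbf{The step you call the main obstacle is not proved.} Your proposed route, sliding the vacancy toward the box using a local version of Lemma \ref{lem3.2}, does not work as described. Lemma \ref{lem3.2} is a global averaging statement and gives no local density. Moreover, an $\mathbb{E}\to\mathbb{E}$ slide of the vacancy needs two occupied sites ahead of it, and one of them may be a vacant $\mathbb{O}^{d}_{N}$-site.

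The missing idea is to move the box, not the vacancy. By the repaired routing step, either $\Phi$ can be increased at once, or every $\mathbb{E}^{d}_{N}$-neighbor of the full box is occupied. In the latter case the box can be shifted by any $\bm{v}\in\mathcal{V}$ while preserving every occupation on $\mathbb{E}^{d}_{N}$:
\begin{itemize}
\item If $\bm{b}$ is not on the back face, nothing needs to be done, since the new face consists of already occupied $\mathbb{E}$-neighbors.
\item Otherwise the only possibly vacant new site is $\bm{b}+2\bm{v}\in\mathbb{O}^{d}_{N}$. If it is vacant, let the particle at $\bm{b}+\bm{v}$ jump to $\bm{b}+2\bm{v}$, facilitated by $\bm{b}$. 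Then let the particle at $\bm{b}$ jump to $\bm{b}+\bm{v}$, facilitated by the occupied $\mathbb{E}$-neighbor $\bm{b}-\bm{v}$.
\end{itemize}
These moves leave $\Phi$ and all $\mathbb{E}$-occupations unchanged, so a chosen vacancy $\bm{o}\in\mathbb{E}^{d}_{N}$ stays vacant. Iterating along a chain of boxes that ends next to $\bm{o}$ therefore forces the first alternative.

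This is the mechanism of the paper's proof, phrased there as a contradiction with a $\Phi$-maximizing reachable configuration. It also removes your concern about diagonally placed vacancies, since the box can be moved in every coordinate direction.
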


\begin{proof}
Fix $K>\rho^{{\rm st}}_{\star}N^{d}$ and $\eta\in\Sigma_{N,K}$. Suppose the contrary, such that starting from $\eta$ we cannot fully occupy the set $\mathbb{E}^{d}_{N}$. Let $\xi\in\Sigma_{N,K}$ be a configuration with $\eta\rightsquigarrow\xi$ which has the maximum number of particles on $\mathbb{E}^{d}_{N}$. By the assumption, $\xi$ has at least one empty site in $\mathbb{E}^{d}_{N}$, which we will denote by $\bm{o}\in\mathbb{E}^{d}_{N}$.

It now suffices to find another $\zeta\in\Sigma_{N,K}$ with $\xi\rightsquigarrow\zeta$ such that $\zeta$ has one more particle on $\mathbb{E}^{d}_{N}$ than $\xi$, which contradicts the maximality assumption and thus concludes the proof. The idea is to apply Lemma \ref{lem3.2} to move a fully occupied elementary box toward the target site $\bm{o}$ until it reaches a site adjacent to $\bm{o}$. A final finite sequence of legal jumps then moves a particle to $\bm{o}$, which produces a new configuration with one more particle on $\mathbb{E}^{d}_{N}$.

More precisely, we start from $\xi_{0}=\xi$ and denote by $\mathcal{B}$ a fully occupied elementary box in $\mathbb{T}^{d}_{N}$, which exists by Lemma \ref{lem3.2} applied to $\xi$. There exists a sequence of elementary boxes $\mathcal{B}=\mathcal{B}_{0},\mathcal{B}_{1},\dots,\mathcal{B}_{m}$ such that each pair $(\mathcal{B}_{i},\mathcal{B}_{i+1})$ shares exactly $2^{d-1}$ common sites and $\mathcal{B}_{m}$ is adjacent to $\bm{o}$. Consider each step $i\in\llbracket0,m-1\rrbracket$ with a configuration $\xi_{i}$ and a fully occupied box $\mathcal{B}_{i}$ of $\xi_{i}$. There are $d2^{d}$ neighboring sites of $\mathcal{B}_{i}$, and exactly $d$ of them belong to $\mathbb{O}^{d}_{N}$ while the others belong to $\mathbb{E}^{d}_{N}$. Let us denote them by $\mathcal{O}_{i}$ and $\mathcal{E}_{i}$, respectively.

First, suppose that a site in $\mathcal{E}_{i}$ is empty. Then, let $\bm b$ be the only element of $\mathcal B_i \cap \mathbb O_N^d$, and define
\[
\ell(\bm u) := | \{ j \in \llbracket 1,d \rrbracket : u_j \ne b_j \} | \qquad \text{for} \quad \bm u \in \mathcal B_i.
\]
Then, take a pair of empty $\bm y \in \mathcal E_i$ and its neighbor $\bm u \in \mathcal B_i$ such that $\ell := \ell (\bm u)$ is minimal. By definition, there exists a nearest-neighbor path of sites $\bm u = \bm u_0 , \bm u_1 , \dots , \bm u_\ell = \bm b$ in $\mathcal B_i$ such that $\ell(\bm u_j) = \ell(\bm u) - j$. Then, we may move the empty position, initially at $\bm y$, along the path $\bm y \to \bm u \to \bm u_1 \to \cdots \to \bm u_\ell = \bm b$, which is legitimate since each facilitator position $2\bm u_j - \bm u_{j-1}$, $j \ge 1$, is occupied by the minimality assumption of $\bm u$, whereas the first facilitator position $2 \bm u - \bm y$ is inside $\mathcal B_i$ so it is occupied. This contradicts the maximality of $\xi$ since we have moved a particle at $\bm b \in \mathbb O_N^d$ to $\bm y$. Thus, all sites in $\mathcal{E}_{i}$ must be occupied.

Next, consider the set $\mathcal{B}_{i+1}\setminus\mathcal{B}_{i}$ of size $2^{d-1}$. If a site in $(\mathcal{B}_{i+1}\setminus\mathcal{B}_{i})\cap\mathcal{O}_{i}$ is empty, then the three consecutive sites immediately behind it are all occupied: the first belongs to $(\mathcal{B}_{i}\cap\mathcal{B}_{i+1})\cap\mathbb{E}^{d}_{N}$, the second belongs to $(\mathcal{B}_{i}\setminus\mathcal{B}_{i+1})\cap\mathbb{O}^{d}_{N}$, and the third belongs to $\mathcal{E}_{i}$. This means that we may push the particles two times in a row to eventually move the particle at $(\mathcal{B}_{i}\setminus\mathcal{B}_{i+1})\cap\mathbb{O}^{d}_{N}$ to $(\mathcal{B}_{i+1}\setminus\mathcal{B}_{i})\cap\mathcal{O}_{i}$, maintaining the total number of particles on $\mathbb{E}^{d}_{N}$. This procedure can be carried out to all empty sites in $(\mathcal{B}_{i+1}\setminus\mathcal{B}_{i})\cap\mathcal{O}_{i}$, thereby making them all occupied. After all these procedures, we obtain a new configuration $\xi_{i+1}$ with $\xi_{i}\rightsquigarrow\xi_{i+1}$ which has the same number of particles on $\mathbb{E}^{d}_{N}$, has $\mathcal{B}_{i+1}$ fully occupied, and leaves the occupation of every site in $\mathbb E_N^d$ unchanged.

Repeating this algorithm for all $i\in\llbracket0,m-1\rrbracket$, we obtain a terminal configuration $\xi_{m}$ and a fully occupied box $\mathcal{B}_{m}$ of $\xi_{m}$ which is adjacent to $\bm{o}$.
The configuration $\xi_m$ still maximizes the number of particles on $\mathbb E_N^d$. By the preceding maximality argument, every site in $\mathbb E_N^d$ adjacent to the fully occupied box $\mathcal B_m$, including $\bm o$, must be occupied. However, the construction leaves the occupation of every site in $\mathbb E_N^d$ unchanged, so $\bm o$ remains empty. This contradiction completes the proof.
\end{proof}
Next, define 
\[
\Gamma_{N,K}:=\left\{ \xi\in\Sigma_{N,K}:\xi=1\quad\text{identically on}\enspace\mathbb{E}^{d}_{N}\right\} .
\]

\begin{lem}
\label{lem3.4}For any $\xi,\xi'\in\Gamma_{N,K}$ where $N$ is even, we have $\xi\rightsquigarrow\xi'$.
\end{lem}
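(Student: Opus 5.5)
The plan is to reduce the dynamics restricted to $\Gamma_{N,K}$ to a symmetric exclusion process on the sublattice $\mathbb{O}^{d}_{N}$, which we view as a discrete torus of side $N/2$ with nearest-neighbour steps $\pm 2\bm{e}_j$. Every $\xi\in\Gamma_{N,K}$ is completely determined by its restriction to $\mathbb{O}^{d}_{N}$. On $\mathbb{O}^{d}_{N}$ there are exactly $K-|\mathbb{E}^{d}_{N}|$ particles, and this number is the same for every element of $\Gamma_{N,K}$. So $\xi$ and $\xi'$ are two placements of the same number of particles on the vertex set of a connected graph.

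\emph{Key step: an elementary transfer.} Let $\xi\in\Gamma_{N,K}$ and $\bm{b}\in\mathbb{O}^{d}_{N}$ with $\xi(\bm{b})=1$. Let $\bm{v}=\pm\bm{e}_j$ be such that $\xi(\bm{b}+2\bm{v})=0$; note $\bm{b}+2\bm{v}\in\mathbb{O}^{d}_{N}$ because $N$ is even. The sites $\bm{b}\pm\bm{v}$ have $j$-th coordinate even, so they lie in $\mathbb{E}^{d}_{N}$ and are occupied. We perform two legal jumps.
\begin{enumerate}
\item First, $\bm{b}+\bm{v}\to\bm{b}+2\bm{v}$. The target is empty and the facilitator $\bm{b}$ is occupied.
\item Second, $\bm{b}\to\bm{b}+\bm{v}$. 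The target was just vacated and the facilitator $\bm{b}-\bm{v}\in\mathbb{E}^{d}_{N}$ is occupied.
\end{enumerate}
The outcome is the configuration $\xi^{\bm{b},\bm{b}+2\bm{v}}$, which again lies in $\Gamma_{N,K}$. Hence any particle on $\mathbb{O}^{d}_{N}$ may be swapped with an empty neighbouring site of the sublattice torus. Every such swap is reachable in both directions, since the reverse move is the same construction applied from $\bm{b}+2\bm{v}$.

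\emph{Conclusion.} It remains to observe that, on a connected graph, nearest-neighbour particle–hole swaps connect any two configurations with the same number of particles. This is the standard irreducibility of exclusion processes. Concretely, I would argue by induction on the number of sites where $\xi$ and $\xi'$ differ. Take a particle of $\xi$ at a site $\bm{p}$ not occupied in $\xi'$, and a hole of $\xi$ at a site $\bm{h}$ occupied in $\xi'$. Along a sublattice path from $\bm{p}$ to $\bm{h}$, move the particle forward by swaps. Whenever the next site is occupied, instead advance the nearest particle ahead of it into the first hole along the path, as in a relay. The net effect is to transfer one particle from $\bm{p}$ to $\bm{h}$ while leaving all other sites unchanged, which reduces the discrepancy by two. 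Degenerate cases are harmless: if $N=2$, then $\mathbb{O}^{d}_{N}$ is a single site, so $\Gamma_{N,K}$ is a singleton.

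I expect no serious obstacle. The only point needing care is the legality check of the two-jump transfer, namely that both facilitators $\bm{b}$ and $\bm{b}-\bm{v}$ are occupied at the right moments. This relies on $\mathbb{E}^{d}_{N}$ being fully occupied and on the evenness of $N$, which guarantees that $\bm{b}\pm\bm{v}\in\mathbb{E}^{d}_{N}$ and $\bm{b}+2\bm{v}\in\mathbb{O}^{d}_{N}$ globally on the torus.
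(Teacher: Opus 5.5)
Your proposal is correct and follows the paper's proof essentially verbatim. It uses the same two-jump transfer $\bm b+\bm v\to\bm b+2\bm v$, then $\bm b\to\bm b+\bm v$; the facilitators $\bm b$ and $\bm b-\bm v$ are occupied because $\mathbb{E}^{d}_{N}$ is full and $N$ is even. It then identifies $\mathbb{O}^{d}_{N}$ with $\mathbb{T}^{d}_{N/2}$ and invokes irreducibility of simple exclusion, where your relay argument and $N=2$ remark simply spell out what the paper calls clear.
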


\begin{proof}
First, let us take two configurations $\xi,\xi'\in\Gamma_{N,K}$ such that there exist $\bm{x}\in\mathbb{O}^{d}_{N}$ and $\bm{v}\in\mathcal{V}$ such that $\xi(\bm{x})=\xi'(\bm{x}+2\bm{v})=1$, $\xi(\bm{x}+2\bm{v})=\xi'(\bm{x})=0$, and they agree on all other sites. Then, since $\bm{x},\bm{x}+2\bm{v}\in\mathbb{O}^{d}_{N}$ and $\bm{x}+\bm{v},\bm{x}-\bm{v}\in\mathbb{E}^{d}_{N}$, we have 
\[
\xi(\bm{x}-\bm{v})=\xi(\bm{x})=\xi(\bm{x}+\bm{v})=1\qquad\text{and}\quad\xi(\bm{x}+2\bm{v})=0.
\]
Thus by two jumps in a row, we can move the particle at $\bm{x}$ to $\bm{x}+2\bm{v}$, thereby obtaining the configuration $\xi'$. Thus, $\xi\rightsquigarrow\xi'$.

Now, for the configurations in $\Gamma_{N,K}$, the $N^{d}-K$ empty sites are elements of $\mathbb{O}^{d}_{N}$. Let us identify $\mathbb{O}^{d}_{N}$ with the half-size periodic torus $\mathbb{T}^{d}_{N/2}$ by declaring that 
\[
\bm{x}\sim'\bm{y}\qquad\text{if and only if}\qquad\frac{\bm{x}-\bm 1}{2}\sim\frac{\bm{y}-\bm 1}{2}\qquad\text{in}\quad\mathbb{T}^{d}_{N/2}.
\]
The claim in the previous paragraph indicates that any particle in $\mathbb{O}^{d}_{N}$ can move to its neighbor (with respect to $\sim'$) in $\mathbb{O}^{d}_{N}$ by legal jumps. Thus, these allowed transitions between configurations in $\Gamma_{N,K}$ can be identified with exclusion-rule nearest-neighbor jumps on the corresponding space $\mathbb{T}^{d}_{N/2}$. Since it is clear that the set $\Sigma_{N/2,N^d-K}$ is irreducible with respect to the simple exclusion process, we have proved that for any given $\xi,\xi'\in\Gamma_{N,K}$ there exists a series of legal jumps from $\xi$ to $\xi'$, thus $\xi\rightsquigarrow\xi'$.
\end{proof}

\begin{proof}
[Proof of Theorem \ref{thm1}-(1) when $N$ is even] Denote by $\mathfrak{J}_{N,K}$ the irreducible class that contains all elements in $\Gamma_{N,K}$, which exists by Lemma \ref{lem3.4}. Now, take any recurrent element $\eta\in\Sigma_{N,K}$. By Lemma \ref{lem3.3}, there exists a series of legal jumps from $\eta$ to the irreducible class $\mathfrak{J}_{N,K}$, which automatically verifies that $\eta\in\mathfrak{J}_{N,K}$. Thus, $\mathfrak{J}_{N,K}$ is exactly the unique irreducible component of $\Sigma_{N,K}$. In addition, $|\mathfrak{J}_{N,K}|\ge|\Gamma_{N,K}|\ge2$, so $\mathfrak{J}_{N,K}$ is clearly active, which completes the proof if $N$ is even.
\end{proof}

Next, let us assume that $N$ is odd. The overall strategy is the same, and the suboptimal conditions in Theorem \ref{thm1} come from the boundary behavior and parity disagreements.
We continue to use the same sets $\mathbb O_N^d$, $\mathbb E_N^d$, and $\Gamma_{N,K}$ as defined above.

\begin{lem}
\label{lem3.2-odd}Suppose that $N$ is odd. If $K> N^d - ((N-1)/2)^d$, then for any $\eta\in\Sigma_{N,K}$ there exists $\bm{x}\in\{0,1,\dots,N-2\}^d$ such that $\eta=1$ identically on $\mathcal{B}_{\bm{x}}$.
\end{lem}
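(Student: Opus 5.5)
The plan is to adapt the averaging argument of Lemma \ref{lem3.2}. Now, however, we use only a disjoint family of boxes that avoid wrapping around the torus. For odd $N$, write $N-1=2M$ and consider the anchors
\[
\bm{x}\in\{0,2,4,\dots,N-3\}^{d},
\]
that is, $x_i=2k_i$ with $k_i\in\llbracket0,M-1\rrbracket$. Each such anchor lies in $\{0,1,\dots,N-2\}^d$. The corresponding elementary boxes $\mathcal{B}_{\bm x}=\prod_i\{2k_i,2k_i+1\}$ are pairwise disjoint, do not use the coordinate value $N-1$ in any direction, and do not wrap around. Together they tile the subcube $\{0,\dots,N-2\}^d$. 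There are $M^d=((N-1)/2)^d$ of them.

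I will argue by contradiction. Suppose no box $\mathcal{B}_{\bm x}$ with $\bm x\in\{0,\dots,N-2\}^d$ is fully occupied. In particular, each of the $M^d$ disjoint boxes above contains at least one empty site. Since the boxes are disjoint, the number of empty sites of $\eta$ is at least $M^d$. Hence
\[
K\le N^d-\Bigl(\frac{N-1}{2}\Bigr)^d,
\]
which contradicts the hypothesis. Therefore some box in this disjoint family, and hence some $\bm x\in\{0,\dots,N-2\}^d$, satisfies $\eta\equiv1$ on $\mathcal{B}_{\bm x}$.

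The only point requiring care is the counting of anchors. The values $2k_i$ and $2k_i+1$ must stay in $\{0,\dots,N-2\}$, which needs $2k_i+1\le N-2$, i.e.\ $k_i\le (N-3)/2=M-1$. This gives exactly $M$ choices per coordinate, so the threshold matches the statement. There is no real obstacle beyond this. The restriction $\bm x\in\{0,\dots,N-2\}^d$ is stated presumably so that the box does not straddle the seam between coordinates $N-1$ and $0$, where the parity pattern of $\mathbb{O}^d_N$ breaks for odd $N$. This will matter in the odd analogue of Lemma \ref{lem3.3}.
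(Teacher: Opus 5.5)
Your proof is correct and is essentially the paper's argument. The paper uses the same $((N-1)/2)^d$ disjoint non-wrapping boxes anchored at $2\bm y$, $\bm y\in\{0,\dots,(N-3)/2\}^d$. It then concludes by pigeonhole, since the hypothesis leaves fewer than $((N-1)/2)^d$ empty sites.
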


\begin{proof}
Let $m=(N-1)/2$. Notice that the $m^d$ elementary boxes anchored at $2\bm y \in \mathbb T_N^d$, $\bm y \in \{0,1,\dots,(N-3)/2\}^d$, are mutually disjoint. Since there are fewer than $m^d$ empty sites, at least one of them should be fully occupied as desired.
\end{proof}

\begin{lem}
\label{lem3.3-odd}For any $\eta\in\Sigma_{N,K}$ where $N$ is odd and $K> N^d - ((N-1)/2)^d$, there exists $\xi\in\Sigma_{N,K}$ with $\eta\rightsquigarrow\xi$ such that $\xi=1$ identically on $\mathbb{E}^{d}_{N}$.
\end{lem}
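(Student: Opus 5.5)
The plan is to follow the proof of Lemma \ref{lem3.3} almost verbatim. We argue by contradiction using a configuration $\xi$ with $\eta\rightsquigarrow\xi$ that maximizes the number of particles on $\mathbb{E}^{d}_{N}$, together with an empty site $\bm{o}\in\mathbb{E}^{d}_{N}$. The only change is that we work exclusively with \emph{non-wrapping} elementary boxes, i.e.\ boxes $\mathcal{B}_{\bm{x}}$ with $\bm{x}\in\{0,1,\dots,N-2\}^d$. Lemma \ref{lem3.2-odd} provides a fully occupied non-wrapping box $\mathcal{B}_0$ in $\xi$.

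The reason for this restriction is the seam of the odd torus. There $N-1$ and $0$ are both even, so a box crossing the seam may contain no site of $\mathbb{O}^{d}_{N}$. A non-wrapping box, by contrast, contains exactly one site $\bm{b}\in\mathbb{O}^{d}_{N}$, since $\{x_j,x_j+1\}\subset\{0,\dots,N-1\}$ contains exactly one odd integer. Moreover, its neighbors in $\mathbb{O}^{d}_{N}$ are obtained from $\bm{b}$ by a single outward step from an even coordinate. Such a step stays inside $\{0,\dots,N-1\}$ or, at the seam, lands on an even coordinate. Hence $|\mathcal{O}_i|\le d$, and every other neighbor lies in $\mathcal{E}_i$.

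We then rerun the two steps of the even proof for a chain $\mathcal{B}_0,\dots,\mathcal{B}_m$ of non-wrapping boxes, where consecutive boxes share a face of $2^{d-1}$ sites. Such a chain exists because the anchor set $\{0,\dots,N-2\}^d$ is connected. We choose $\mathcal{B}_m$ adjacent to $\bm{o}$ but not containing it; this is possible for $N\ge3$, by taking a box containing a neighbor of $\bm{o}$ and avoiding the coordinate of $\bm{o}$ in one direction.

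\emph{Step 1.} Every site of $\mathcal{E}_i$ is occupied. Otherwise, take an empty $\bm{y}\in\mathcal{E}_i$ adjacent to $\bm{u}\in\mathcal{B}_i$ with $\ell(\bm{u})$ minimal, and move the vacancy along a monotone path from $\bm{u}$ to $\bm{b}$. The first facilitator lies inside the box. Each later facilitator $2\bm{u}_j-\bm{u}_{j-1}$ is either in the box or is a neighbor of the box reached by stepping away from $\bm{b}$ from a site of smaller $\ell$. The latter is either in $\mathcal{E}_i$ and occupied by minimality, or, if it is an $\mathbb{O}$-site, it equals $\bm{b}$'s outward neighbor, which the path never uses. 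The net effect moves a particle from $\bm{b}\in\mathbb{O}^d_N$ to $\bm{y}\in\mathbb{E}^d_N$, contradicting maximality.

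\emph{Step 2.} Empty sites of $(\mathcal{B}_{i+1}\setminus\mathcal{B}_i)\cap\mathcal{O}_i$ are filled by a double push from the $\mathbb{O}$-site of $\mathcal{B}_i$. The three sites behind such a site are a shared $\mathbb{E}$-site, the site $\bm{b}$, and a site obtained from the odd coordinate $b_j\in\llbracket1,N-2\rrbracket$ by a $\pm1$ step. That last site has an even coordinate, so it lies in $\mathcal{E}_i$ and is occupied by Step 1. The $\mathbb{E}$-sites of $\mathcal{B}_{i+1}\setminus\mathcal{B}_i$ are in $\mathcal{E}_i$, hence already occupied, so $\mathcal{B}_{i+1}$ becomes fully occupied. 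The occupation of $\mathbb{E}^d_N$ is unchanged, so maximality is preserved.

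At the end, $\xi_m$ is still maximal, $\bm{o}\in\mathcal{E}_m$ is empty, and Step 1 applied to $\mathcal{B}_m$ yields the contradiction.

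The main obstacle I expect is the bookkeeping at the seam. One must check that the facilitator and "behind" sites used in Steps 1 and 2 never fall into the wrap-around region in a way that changes their parity class or leaves $\mathcal{B}_i\cup\mathcal{E}_i$. I would handle this by verifying coordinatewise that every step is taken from an odd coordinate in $\llbracket1,N-2\rrbracket$, which never wraps, or from an even coordinate of a non-wrapping box, where wrapping only produces further even sites. Both cases can only enlarge $\mathcal{E}_i$.
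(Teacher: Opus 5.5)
Your proposal is correct and follows the paper's proof. Like the paper, you restrict to non-wrapping boxes supplied by Lemma \ref{lem3.2-odd}, rerun the maximality/vacancy-transport step and the double push along a non-wrapping chain, and use that odd coordinates lie in $\llbracket 1,N-2\rrbracket$, so no facilitator crosses the seam. One small correction to Step 1: if $\bm u_{j-1}$ and $\bm u_j$ differ in the $k$-th coordinate, the facilitator $2\bm u_j-\bm u_{j-1}$ has $k$-th coordinate $b_k\pm1$, which is even and does not wrap, so it is always an outward $\mathbb{E}$-neighbor of $\bm u_j$ lying in $\mathcal{E}_i$ and occupied by minimality; the ``inside the box'' and ``$\mathbb{O}$-site'' alternatives you list never arise.
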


\begin{proof}
As done in the proof of Lemma \ref{lem3.3}, suppose the contrary and take $\xi \in \Sigma_{N,K}$ with $\eta \rightsquigarrow \xi$ such that $\xi$ has the maximum number of particles on $\mathbb E_N^d$, which has at least one empty site $\bm o \in \mathbb E_N^d$. Then, we employ the same idea and construct a sequence of boxes $\mathcal B_0,\mathcal B_1, \dots, \mathcal B_m$ from an initial full box to a box adjacent to $\bm o$. We may take this sequence to avoid crossing the periodic boundary. Then by the same logic, at each step $i$, any site in $\mathcal E_i$ must be already occupied, since otherwise it violates the maximality of $\xi$. Moreover, the idea of occupying all sites in $(\mathcal B_{i+1} \setminus \mathcal B_i) \cap \mathcal O_i$ works as well; here we use the fact that $\mathbb O_N^d$ is away from the boundary of $\mathbb T_N^d$; thus, the third facilitator does not go across the boundary. Thus, we may push this box to the terminal box and complete the proof exactly as in Lemma \ref{lem3.3}.
\end{proof}

\begin{lem}
\label{lem3.4-odd}For any $\xi,\xi'\in\Gamma_{N,K}$ where $N$ is odd, we have $\xi\rightsquigarrow\xi'$.
\end{lem}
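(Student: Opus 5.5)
The plan is to follow the proof of Lemma \ref{lem3.4} verbatim, with the half-size torus replaced by a half-size box. The only new point is that for odd $N$ the set $\mathbb{O}^{d}_{N}$ does not inherit a periodic structure.

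First I would describe $\mathbb{O}^{d}_{N}$ explicitly. For odd $N$ the odd residues in $\{0,1,\dots,N-1\}$ are $1,3,\dots,N-2$, so
\[
\mathbb{O}^{d}_{N}=\{1,3,\dots,N-2\}^{d}
\]
is a grid with $m:=(N-1)/2$ points per coordinate. Take $\bm{x}\in\mathbb{O}^{d}_{N}$ and $\bm{v}\in\mathcal{V}$ such that $\bm{x}+2\bm{v}\in\mathbb{O}^{d}_{N}$; this is a non-wrapping step inside the grid. Then $\bm{x}\pm\bm{v}$ each have exactly one even coordinate. Note that $0$ and $N-1$ are even, so $\bm{x}-\bm{v}$ is in $\mathbb{E}^{d}_{N}$ even when $\bm{x}$ sits at the edge of the grid. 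The first paragraph of the proof of Lemma \ref{lem3.4} therefore applies unchanged. If $\xi\in\Gamma_{N,K}$ has a particle at $\bm{x}$ and a vacancy at $\bm{x}+2\bm{v}$, then $\xi(\bm{x}-\bm{v})=\xi(\bm{x})=\xi(\bm{x}+\bm{v})=1$. Two consecutive legal jumps then carry the particle from $\bm{x}$ to $\bm{x}+2\bm{v}$, and the resulting configuration is again in $\Gamma_{N,K}$.

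Second, I would identify $\mathbb{O}^{d}_{N}$ with the box $\{0,1,\dots,m-1\}^{d}$ through the map $\bm{x}\mapsto(\bm{x}-\bm{1})/2$, now carrying the non-periodic nearest-neighbour structure. Steps that would wrap around are simply not used: for instance $N-2+2\equiv0$ is even, so such a step leaves $\mathbb{O}^{d}_{N}$ anyway. Every $\xi\in\Gamma_{N,K}$ has all of its $N^{d}-K$ vacancies in $\mathbb{O}^{d}_{N}$, so $\Gamma_{N,K}$ corresponds bijectively to the configurations of the box with a fixed number of particles. By the first step, each nearest-neighbour exclusion move on the box is realised by legal FEP jumps that stay in $\Gamma_{N,K}$. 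The box is a finite connected graph, and the simple exclusion process on such a graph is irreducible at each fixed particle number: any configuration can be rearranged into any other by moving particles along paths, since adjacent transpositions generate all arrangements. This gives $\xi\rightsquigarrow\xi'$ for all $\xi,\xi'\in\Gamma_{N,K}$.

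I do not expect a serious obstacle here. The one point that needs care is the edge check in the first step, namely that the facilitator $\bm{x}-\bm{v}$ and the intermediate site $\bm{x}+\bm{v}$ really lie in $\mathbb{E}^{d}_{N}$ when $\bm{x}$ is on the edge of the grid. This holds because the neighbouring coordinate values, $0$ and $N-1$ on the outside, are even. It is the same observation already used in the proof of Lemma \ref{lem3.3-odd}.
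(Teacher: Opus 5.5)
Your proposal is correct and follows the paper's own argument. Like the paper, you identify $\mathbb{O}^{d}_{N}$ with the box $\llbracket 0,(N-3)/2\rrbracket^{d}$ with open boundary conditions, realize each nearest-neighbour exclusion move by the two-jump move from Lemma \ref{lem3.4}, and invoke irreducibility of the exclusion process on that box. Your explicit edge check, that the facilitator $\bm{x}-\bm{v}$ and the intermediate site $\bm{x}+\bm{v}$ lie in $\mathbb{E}^{d}_{N}$ because $0$ and $N-1$ are even, is a detail the paper leaves implicit.
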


\begin{proof}
The same idea as in the proof of Lemma \ref{lem3.4} applies here as well; the only difference is that to prevent any movement across the boundary, we identify $\mathbb O_N^d$ with the lattice box $\llbracket 0,(N-3)/2 \rrbracket^d$ with \emph{open} rather than periodic boundary conditions. Since the simple exclusion process on this box is also irreducible, the same argument applies.
\end{proof}

\begin{proof}[Proof of Theorem \ref{thm1}-(1) when $N$ is odd]
As done in the even $N$ case, the irreducible class $\mathfrak J_{N,K}$ that contains $\Gamma_{N,K}$ is exactly the unique irreducible component of $\Sigma_{N,K}$ that is also active. This finishes the proof of part (1) of Theorem \ref{thm1}.
\end{proof}

\subsection{\label{sec3.2}At or Below the Structural Threshold: Multiple Recurrent Classes}

In Section \ref{sec3.2}, we prove part (2) of Theorem \ref{thm1}.
Let us assume that 
\begin{equation}
K\le\begin{cases}
\rho^{{\rm st}}_{\star}N^{d} & \text{if}\quad N\enspace\text{is even},\\
N^{d}-\left(\frac{N+1}{2}\right)^{d} & \text{if}\quad N\enspace\text{is odd}.
\end{cases}\label{eq:K-bound}
\end{equation}
Recall that we identify $\mathbb T_N^d$ with $\{0,1,\dots,N-1\}^{d}$. Let us define 
\[
\mathbb{A}^{d}_{N}:=\left\{ \bm{x}\in\mathbb{T}^{d}_{N}:x_{1},\dots,x_{d}\enspace\text{are all even}\right\} ,
\]
and let $\mathbb{B}^{d}_{N}:=\mathbb{T}^{d}_{N}\setminus\mathbb{A}^{d}_{N}$. It is clear that $|\mathbb{A}^{d}_{N}|=\lfloor\frac{N+1}{2}\rfloor^{d}$, thus 
\begin{equation}
|\mathbb{B}^{d}_{N}|=N^{d}-\left\lfloor \frac{N+1}{2}\right\rfloor ^{d}=\begin{cases}
\rho^{{\rm st}}_{\star}N^{d} & \text{if}\quad N\enspace\text{is even},\\
N^{d}-\left(\frac{N+1}{2}\right)^{d} & \text{if}\quad N\enspace\text{is odd}.
\end{cases}\label{eq:ONd-size}
\end{equation}
Let us write 
\[
\Omega_{N,K}:=\left\{ \eta\in\Sigma_{N,K}:\eta(\bm{x})=0\quad\text{for all}\enspace\bm{x}\in\mathbb{A}^{d}_{N}\right\} .
\]
In words, $\Omega_{N,K}$ collects the configurations supported on $\mathbb{B}^{d}_{N}$. The set $\Omega_{N,K}$ is nonempty by \eqref{eq:ONd-size} and the assumption \eqref{eq:K-bound}.

\begin{claim}
\label{claim:closed}The set $\Omega_{N,K}$ is closed for the Markov chain $\eta^{N}_{t}$.
\end{claim}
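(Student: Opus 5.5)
The claim reduces to a one-step statement. Fix $\eta\in\Omega_{N,K}$ and a legal jump $\bm{x}\to\bm{x}+\bm{v}$, meaning $\eta(\bm{x}-\bm{v})=\eta(\bm{x})=1$ and $\eta(\bm{x}+\bm{v})=0$. It suffices to show that $\bm{x}+\bm{v}\notin\mathbb{A}^{d}_{N}$. Once this holds, the jump only vacates $\bm{x}$ and fills $\bm{x}+\bm{v}\in\mathbb{B}^{d}_{N}$. The resulting configuration therefore still vanishes on $\mathbb{A}^{d}_{N}$, and it still has $K$ particles. Closedness then follows by induction on the number of jumps.

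Suppose for contradiction that $\bm{x}+\bm{v}\in\mathbb{A}^{d}_{N}$, and write $\bm{v}=\pm\bm{e}_{j}$. The sites $\bm{x}$, $\bm{x}+\bm{v}$ and $\bm{x}-\bm{v}$ agree in every coordinate except the $j$-th. So $\bm{x}$ and $\bm{x}-\bm{v}$ have all coordinates other than the $j$-th even. The argument now splits on parity of $N$.

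\emph{$N$ even.} Reduction mod $N$ preserves parity. Hence $(\bm{x}-\bm{v})_{j}\equiv(\bm{x}+\bm{v})_{j}-2v_{j}$ is even, so $\bm{x}-\bm{v}\in\mathbb{A}^{d}_{N}$. This forces $\eta(\bm{x}-\bm{v})=0$, which contradicts facilitation. This is exactly the mechanism that makes $\eta_{\star1}$ absorbing.

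\emph{$N$ odd.} This is the only delicate step, because wrapping around the torus flips parity. Let $a=(\bm{x}+\bm{v})_{j}\in\{0,2,\dots,N-1\}$. If the representatives $a$ and $a-2v_{j}$ both lie in $\{0,\dots,N-1\}$, then $\bm{x}-\bm{v}\in\mathbb{A}^{d}_{N}$ as before, and we reach the same contradiction. Otherwise wrapping occurs, and a case check shows there are only two possibilities.
\begin{itemize}
\item $a=0$ and $\bm{v}=+\bm{e}_{j}$. Then $x_{j}=N-1$, which is even, so $\bm{x}\in\mathbb{A}^{d}_{N}$.
\item $a=N-1$ and $\bm{v}=-\bm{e}_{j}$. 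Then $x_{j}=0$, so again $\bm{x}\in\mathbb{A}^{d}_{N}$.
\end{itemize}
In both cases $\eta(\bm{x})=0$, so there is no particle at $\bm{x}$ to jump, which is a contradiction.

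The remaining cases involve no wrapping of $\bm{x}-\bm{v}$. They are $a=0$ with $\bm{v}=-\bm{e}_{j}$, giving $x_{j}=1$ and $(\bm{x}-\bm{v})_{j}=2$, and $a=N-1$ with $\bm{v}=+\bm{e}_{j}$, giving $(\bm{x}-\bm{v})_{j}=N-3$. Both fall under the first case, since $\bm{x}-\bm{v}\in\mathbb{A}^{d}_{N}$. This finishes the contradiction for both parities and proves the claim.
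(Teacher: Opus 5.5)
Your proof is correct and follows the paper's argument. For even $N$, a jump into $\mathbb{A}^{d}_{N}$ would force the facilitator $\bm{x}-\bm{v}$ into $\mathbb{A}^{d}_{N}$. For odd $N$, the only wrap-around cases, $a=0$ with $\bm{v}=+\bm{e}_{j}$ and $a=N-1$ with $\bm{v}=-\bm{e}_{j}$, are excluded because they would put the jumping site $\bm{x}$ in $\mathbb{A}^{d}_{N}$, exactly as in the paper; your final paragraph on the non-wrapping cases is redundant, since those cases are already covered.
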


\begin{proof}
Suppose the contrary, so that there exists a configuration $\eta\in\Omega_{N,K}$ and a legal jump from $\eta$ that enters $\Omega^{c}_{N,K}$. This means that there exist $\bm{x}\in\mathbb{T}^{d}_{N}$ and $\bm{v}\in\mathcal{V}$, with $\eta(\bm{x}-\bm{v})=\eta(\bm{x})=1$ and $\eta(\bm{x}+\bm{v})=0$, such that $\bm{x}-\bm{v},\bm{x}\in\mathbb{B}^{d}_{N}$ and $\bm{x}+\bm{v}\in\mathbb{A}^{d}_{N}$. Let us write $\bm{y}=\bm{x}+\bm{v}\in\mathbb{T}^{d}_{N}$, so that 
\[
\bm{y}-2\bm{v},\bm{y}-\bm{v}\in\mathbb{B}^{d}_{N}\qquad\text{and}\qquad\bm{y}\in\mathbb{A}^{d}_{N}.
\]
If $N$ is even, then $\bm{y}\in\mathbb{A}^{d}_{N}$ automatically implies that $\bm{y}-2\bm{v}\in\mathbb{A}^{d}_{N}$, which gives a contradiction. If $N$ is odd, then without loss of generality assume that $\bm{v}\in\{\pm\bm{e}_{1}\}$. Then, the two facts $\bm{y}\in\mathbb{A}^{d}_{N}$ and $\bm{y}-\bm{v}\in\mathbb{B}^{d}_{N}$ imply that the two cases $y_{1}=0$ \& $\bm{v}=+\bm{e}_{1}$ and $y_{1}=N-1$ \& $\bm{v}=-\bm{e}_{1}$ are forbidden, i.e., the jump $\bm{y}\to\bm{y}-\bm{v}$ cannot pass through the boundary of the torus. This gives $\bm{y}-2\bm{v}\in\mathbb{A}^{d}_{N}$, which yields a contradiction. Thus, our claim holds.
\end{proof}

\begin{proof}
[Proof of Theorem \ref{thm1}-(2)]
By Claim \ref{claim:closed}, the set $\Omega_{N,K}$ is nonempty and closed, thus it contains an irreducible component $\mathfrak{I}_{N,K}$. Finally, define for each $\bm a \in \{0,1\}^d$ as
\[
\Omega_{N,K}^{\bm a}:=\left\{ \eta\in\Sigma_{N,K}:\eta(\bm{x})=0\quad\text{for all}\enspace\bm{x}\in\mathbb{A}^{d}_{N}+\bm a\right\} ,
\]
where $A+\bm a:=\{\bm{x}+\bm a:\bm{x}\in A\}$. Since the torus is translation invariant, the same logic yields that $\Omega_{N,K}^{\bm a}$ is also nonempty and closed for all $\bm a \in \{0,1\}^d$, thus contains an irreducible component $\mathfrak{I}_{N,K}^{\bm a}$. If, on the contrary, $\Sigma_{N,K}$ has a unique irreducible component, then we should have $\mathfrak I_{N,K}^{\bm a} = \mathfrak I_{N,K}$ for all $\bm a \in \{0,1\}^d$. In particular, we should have
\[
\mathfrak I_{N,K} \subset \bigcap_{\bm a \in \{0,1\}^d} \Omega_{N,K}^{\bm a}.
\]
However, the set on the right-hand side is clearly empty, since the sets $\{ \mathbb A_N^d + \bm a : \bm a \in \{0,1\}^d \}$ cover the whole periodic lattice $\mathbb T_N^d$. This contradicts the assumption and thus concludes the proof.
\end{proof}

\section{\label{sec4}Island Decomposition and Proof of Theorem \ref{thm2}}
In this section, we prove Theorem \ref{thm2}. Throughout, $d \ge 2$ is fixed and $N$ is sufficiently large.

\subsection{Island Decomposition}
Recall that we write $\eta \rightsquigarrow \xi$ if a finite sequence of legal jumps leads from $\eta$ to $\xi$, allowing a sequence of length zero. Let $\mathcal R (\eta) := \{ \xi : \eta \rightsquigarrow \xi \}$. Write $\eta\rightarrowtail\xi$ if $\xi$ is obtained from $\eta$ by a single legal jump, or in other words, if $\xi=\eta^{\bm{x},\bm{x}+\bm{v}}$ for some $\bm{x}\in\mathbb{T}^{d}_{N}$ and $\bm{v}\in\mathcal{V}$ (cf. \eqref{eq:V-def}) where $\eta(\bm x - \bm v) = \eta(\bm x) =1$ and $\eta ( \bm x + \bm v) = 0$.

\begin{defn}[Islands]\label{def:island}
A nonempty set $I \subset \mathbb{T}^{d}_{N}$ is an \emph{island} of $\eta \in \Sigma_N$ if for every $\xi \in \mathcal R(\eta)$ and every nearest-neighbor pair $\bm x \in I$, $\bm y \in I^c$, the following hold: $\xi(\bm x) \xi(\bm y) = 0$, and no legal jump from $\xi$ crosses the edge $\{\bm x , \bm y\}$ in either direction.
An island $I$ is \emph{minimal} if no proper subset of $I$ is an island. Denote by $\mathscr{I}(\eta)$ the collection of all minimal islands.
\end{defn}

This definition is equivalent to the almost-sure dynamical definition: every finite legal path has positive probability of being followed by the jump chain, and the state space is finite. The entire torus is an island, and the complement of a proper island is an island.

For completeness, form a graph on $\mathbb T_N^d$ by retaining a nearest-neighbor edge whenever its endpoints can be simultaneously occupied in a reachable configuration, or whenever a legal jump across it is possible from a reachable configuration. The islands are precisely the nonempty unions of components of this graph. Consequently, the minimal islands partition $\mathbb T_N^d$ and are connected in the nearest-neighbor graph. In particular, the islands are determined by the initial configuration and legal reachability, independently of a realization of the Poisson clocks.

An island initially containing no particles stays empty, because no jump can cross its boundary. Each of its singletons is then itself an island. Thus an initially empty minimal island is a singleton.

A partition is called $\gamma$-nice if each of its parts has cardinality at most $\gamma$.

\begin{thm}
\label{thm:key}There exists a constant $\rho_{0}=\rho_{0}(d)\in(0,2^{-d})$ such that, for all $\rho\in(0,\rho_{0})$, there is a constant $c_0 = c_0 (\rho,d) > 0$ satisfying
\[
\lim_{N\to\infty}\nu^{N}_{\rho}\left(\eta\in\Sigma_{N}:\mathscr{I}(\eta)\quad\text{is}\enspace c_{0}(\log N)^{d}\text{-nice}\right)=1.
\]
\end{thm}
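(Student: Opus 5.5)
The proof has two parts. The first is a deterministic geometric lemma: any large minimal island forces a large connected set carrying an unusually high initial density of particles. The second is a union bound over lattice animals, combined with a large deviation estimate.

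\emph{Deterministic step.} I will build islands from the initial configuration $\eta$ by a closure (``growth'') procedure. Call a finite set $S$ \emph{self-contained} if three conditions hold: every particle of $\eta$ lies inside $S$ or outside its $2$-neighbourhood; $S$ has at least as many empty sites as particles, with room to spare; and $S$ is closed under the rule ``enlarge $S$ by any site that could be reached.'' Concretely, I expect to prove the following.

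If $C$ is a connected component, in the $\ell^\infty$-adjacency graph of range $R=R(d)$, of the set of occupied sites of $\eta$, and if
\[
|\{\bm x\in \mathbb{T}^d_N:\operatorname{dist}(\bm x,C)\le 1\}|\ \ge\ 2^d|C|\cdot(\text{const}),
\]
then the particles of $C$ cannot escape a fixed neighbourhood of $C$.

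The heuristic is that a particle can only move while facilitated by a neighbour behind it, so a cluster must carry its own facilitators along. The low-density absorbing mechanism (no two adjacent particles) means a sparse cluster spreads out and freezes before leaving the neighbourhood. To make this rigorous I would use a potential-type argument. Each legal jump requires two adjacent particles, and moving a cluster's support a distance $L$ costs at least order $L$ pairs of adjacent particles per newly visited region. Hence a cluster of $k$ particles stays within the $O(k)$-neighbourhood of its initial support.

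\emph{Reduction to a density event.} From this I would deduce the key geometric lemma (the one referred to later as Lemma~\ref{l3}). If some minimal island has more than $c_0(\log N)^d$ sites, then there is a $*$-connected set $A\subset\mathbb{T}^d_N$ with $|A|\ge c(\log N)^{d}$ and $\sum_{\bm x\in A}\eta(\bm x)\ge \delta |A|$, where $\delta=\delta(d)>0$ is a fixed density threshold. The idea is to enlarge the island to the union $A$ of the boxes of side $O(1)$ meeting the trajectories of particles that can interact with it. If $A$ had density below $\delta$, a Peierls-type decomposition would split it into sparse, separated clusters. By the previous step those clusters would form smaller islands, contradicting minimality. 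The power $(\log N)^d$ rather than $\log N$ comes from the fact that a cluster of $k$ particles can fill a volume of order $k^d$. So I would aim for an island of volume at most $C k^d$ when it originates from $k$ initial particles in a dense connected set, and then need $k\le c\log N$.

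\emph{Probabilistic step.} The number of $*$-connected sets of size $m$ containing a given site is at most $\lambda_d^m$, with $\lambda_d=e\cdot 3^d$. Under $\nu_\rho$, a Chernoff bound gives
\[
\nu_\rho\Bigl(\textstyle\sum_{A}\eta\ge\delta m\Bigr)\le \binom{m}{\delta m}\rho^{\delta m}\le (e\rho/\delta)^{\delta m}.
\]
Choose $\rho_0$ small enough that $\lambda_d(e\rho/\delta)^{\delta}<e^{-1}$ (certainly $\rho_0<2^{-d}$). A union bound over $N^d$ anchor sites and $m\ge c\log N$ then gives a probability of at most $N^d e^{-c\log N}\to0$ once $c$ is large. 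Translating $k\le c\log N$ into an island volume of at most $c_0(\log N)^d$ finishes the proof.

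\emph{Main obstacle.} The hard part is the deterministic confinement statement: bounding how far a cluster of $k$ particles can spread under the directional facilitation rule, and showing that clusters which are separated in the density sense never interact over any reachable trajectory. Interaction is possible because clusters grow and can merge, so I would handle this with an iterative multiscale merging argument. At each scale, clusters whose confinement regions overlap are merged, and I would show that the merged set still satisfies the high-density condition with a constant degraded only geometrically.
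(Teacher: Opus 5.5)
Your probabilistic step is fine. It is essentially the paper's Lemma~\ref{l3}, a lattice-animal count combined with a Chernoff bound; using $*$-connected animals only changes constants. The gap is the deterministic reduction, which carries the whole content of the theorem and is only sketched.

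Whether a set is an island depends on every reachable configuration. Clusters that are separated in $\eta$ can move, meet, and then facilitate each other. Your ``Peierls-type decomposition into separated clusters that would form smaller islands'' therefore presupposes exactly the non-interaction that must be proved. The multiscale merging you propose for this is not specified. Moreover, a density constant that degrades by a fixed factor at each scale tends to $0$ after unboundedly many scales, which would ruin the Chernoff estimate.

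The announced output of the reduction is also inconsistent with your own heuristic and with the bound you then apply. If $k$ particles can spread over a volume of order $k^d$, the enlarged set $A$ may have density of order $k^{1-d}$. So there is no reason to expect a dense $A$ with $|A|\ge c(\log N)^d$. Meanwhile, your union bound only uses sizes $m\ge c\log N$.

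The missing idea is a simple invariant along any single legal path $\omega=(\omega_0,\dots,\omega_\ell)$ started from $\eta$. Once a particle enters an elementary box $\mathcal B_{\bm x}$, the box never becomes empty again: if its last particle jumped $\bm z\to\bm z+\bm v$, the facilitator $\bm z-\bm v$ would lie in the same box. Now fix a minimal island $I$ and a connected component $C$ of the set of sites of $I$ occupied at some time along $\omega$. Then no particle leaves $C$, and every box meeting $C$ still contains a particle of $C$ at time $\ell$. Counting anchors gives $|C|\le 2^d\sum_{\bm x\in C}\eta(\bm x)$.

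Merging of clusters is built in, because $C$ is defined only after the whole path, so no multiscale scheme is needed. Minimality of $I$ shows that these components, taken over all finite paths, cover $I$ (Lemma~\ref{l1}). Since they can overlap heavily, their union need not be dense. The paper therefore extracts a chain of them with no triple intersections whose union has a coordinate projection of length at least $\frac12|I|^{1/d}$ (Lemma~\ref{l2}). This yields a connected set of size at least $\frac12|I|^{1/d}$ and density at least $2^{-d-1}$.

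This projection step, not a $k^d$ volume count, is where the exponent $d$ in $(\log N)^d$ comes from. A statement of this form is what your reduction would have to deliver.
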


\begin{defn}
[Dense and thick sets]\label{def:dense-thick}Given $\eta\in\Sigma_{N}$, a nonempty connected set $C\subset\mathbb{T}^{d}_{N}$ is called \emph{dense} with respect to $\eta$ if the particle density of $\eta$ in $C$ is at least $2^{-d}$, i.e., if  
\[
\sum_{\bm{x}\in C}\eta(\bm{x})\geq\frac{|C|}{2^{d}}.
\]
A connected set $C\subset\mathbb{T}^{d}_{N}$ is \emph{thick} with respect to $\eta$ if $C$ is a finite union of dense sets.

\end{defn}

\begin{lem}
\label{l1}For $\eta\in\Sigma_{N}$, every minimal island of $\eta$ with at least one particle is thick.
\end{lem}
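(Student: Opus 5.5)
The plan is to read off thickness from the connectivity structure of minimal islands described after Definition \ref{def:island}. Fix $\eta$ and a minimal island $I\in\mathscr I(\eta)$ with $\sum_{\bm x\in I}\eta(\bm x)\ge1$.

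If $|I|=1$, then $I=\{\bm x\}$ with $\eta(\bm x)=1$. Its density is $1\ge2^{-d}$, so $I$ is dense and hence thick.

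If $|I|\ge2$, recall that $I$ is a connected component of the graph whose retained edges are those whose endpoints are simultaneously occupied in some $\xi\in\mathcal R(\eta)$, or across which a legal jump occurs from some reachable configuration. In either case both endpoints of a retained edge are occupied at some reachable time. Hence every site of $I$ is \emph{ever occupied}, and the retained edges form a connected graph spanning $I$. The task is therefore to cover the set of ever-occupied sites of $I$ by finitely many $\eta$-dense connected sets whose union is connected.

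I would do this by induction along a legal path, maintaining an invariant. Enumerate a finite sequence $\eta=\xi_0\rightarrowtail\xi_1\rightarrowtail\cdots\rightarrowtail\xi_n$ that visits every retained edge of $I$; this is possible since $\mathcal R(\eta)$ is finite and reachability is transitive. The invariant, for each $k$, concerns a family $\mathcal D_k$ of connected sets with the following properties.
\begin{itemize}
\item Each $D\in\mathcal D_k$ contains at least $|D|/2^d$ particles of the \emph{initial} configuration $\eta$.
\item The union $U_k=\bigcup\mathcal D_k$ contains every site occupied in $\xi_0,\dots,\xi_k$ and every retained edge used so far.
\item $U_k$ is connected along those edges.
\end{itemize}
For the base case take $\mathcal D_0=\{\{\bm x\}:\eta(\bm x)=1\}$, restricted to $I$ and merged along edges joining two initially occupied neighbours. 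Two adjacent singletons of density $1$ give a dense pair, so the merging preserves density.

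The inductive step handles a jump $\bm x\to\bm x+\bm v$ with $\bm x-\bm v,\bm x$ occupied and $\bm x+\bm v\notin U_k$. Here I would replace the element $D\ni\bm x$ by $D\cup\{\bm x+\bm v\}$ and show that density survives. The mechanism I expect to use is the one behind Lemma \ref{lem3.2}: facilitation requires two occupied sites $\bm x-\bm v,\bm x$ in a line, so the moving particle cannot leave behind a configuration in which every elementary box meeting the newly explored region is sparse. Concretely, I would aim to show that the explored region admits a partition into pieces, each of which can be assigned a distinct initial particle, with each initial particle responsible for at most $2^d$ sites (one elementary box's worth). This gives the $2^{-d}$ density of each dense piece.

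The main obstacle is exactly this counting step. Adding a site to a dense set can destroy density, so the naive invariant may fail. In that case the fallback is to split the family rather than enlarge a single set: open a new dense set around the pair $\{\bm x,\bm x+\bm v\}$ charged to the facilitating particle at $\bm x-\bm v$, and keep connectivity through the shared site $\bm x$. Only density of each piece, not of the union, is required by the definition of thickness, and that flexibility is what I expect to make the induction close.
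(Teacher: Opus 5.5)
\textbf{There is a genuine gap: the density bound is never proved, and the proposed fallback does not work.} The case $|I|=1$ is correct. So is your observation that every site of $I$ is occupied at some reachable time; the paper gets this directly from minimality. But the density bound is the whole content of the lemma.

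Density in Definition \ref{def:dense-thick} counts particles of the \emph{initial} configuration $\eta$ lying \emph{inside} the set. A newly explored site $\bm x+\bm v$ is initially vacant. So may be $\bm x$ and $\bm x-\bm v$, because the moving particle and its facilitator may both have travelled from elsewhere. A small set around $\{\bm x,\bm x+\bm v\}$ can therefore have $\eta$-density $0$, and ``charging it to the facilitator'' means nothing for the definition. Any dense set containing $\bm x+\bm v$ must contain enough initial particles. This forces you to work with whole explored components, not with an arbitrary family that you enlarge or split.

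The missing idea is as follows. Fix one legal path $\omega=(\omega_0,\dots,\omega_\ell)$ from $\eta$. Let $M(\omega)$ be the set of sites of $I$ occupied at some step, and let $C$ be a connected component of $M(\omega)$.
\begin{enumerate}
\item The number of particles in $C$ is constant along $\omega$, hence equals $\sum_{\bm z\in C}\eta(\bm z)$. Indeed, the two endpoints of a jump are occupied at consecutive steps and adjacent, so they lie in the same component, and no particle crosses $\partial I$.
\item Once an elementary box $\mathcal B_{\bm x}$ meeting $C$ has been visited by a particle of $C$, it keeps one forever. If such a particle leaves by $\bm z\to\bm z+\bm v$, the facilitator $\bm z-\bm v$ still lies in $\mathcal B_{\bm x}$, since the box has two sites in each direction. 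It also lies in $C$: it is occupied and adjacent to $\bm z$, hence in $I$ by the island property, and it is in the same component as $\bm z$.
\end{enumerate}
Hence at the final step every box anchored in $A_C=\{\bm x:\mathcal B_{\bm x}\cap C\neq\emptyset\}\supset C$ contains a particle of $C$. Since each site lies in exactly $2^d$ boxes,
\[
|C|\le|A_C|\le\sum_{\bm x\in A_C}|\mathcal B_{\bm x}\cap P^C_\ell|\le 2^d\,|P^C_\ell|=2^d\sum_{\bm z\in C}\eta(\bm z),
\]
where $P^C_\ell$ is the set of sites of $C$ occupied in $\omega_\ell$. The charging is done at the \emph{final} time through boxes and then transferred to initial particles by conservation. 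It is not done jump by jump. Your allusion to the mechanism of Lemma \ref{lem3.2} points in this direction, but without the box-retention invariant and component-wise conservation there is no proof of density.

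\textbf{The single-path enumeration is also false.} You claim one legal path visits every retained edge of $I$. Reachability is transitive but not symmetric, and the dynamics is irreversible. For instance, let $\eta$ have exactly two particles, at $\bm a$ and $\bm a+\bm e_1$. The only legal jumps are $\bm a+\bm e_1\to\bm a+2\bm e_1$ and $\bm a\to\bm a-\bm e_1$, and each leads to an absorbing configuration. Both edges are retained edges of the minimal island $\{\bm a-\bm e_1,\bm a,\bm a+\bm e_1,\bm a+2\bm e_1\}$, yet no single path uses both. Relatedly, your invariant that $U_k$ is connected already fails at $k=0$ whenever the initial particles of $I$ are not adjacent to one another.

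The repair is the one the paper uses. Run the component argument above separately for each path. Every site of $I$ is occupied along some path, so the dense components coming from finitely many paths cover $I$. Connectivity of $I$ need not be supplied by the pieces; it holds because $I$ is a minimal island.
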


\begin{proof}
Fix such a minimal island $I$ and let $P=\{\bm{x}\in I:\eta(\bm{x})=1\}\ne\emptyset$. Let $\Xi(\eta)$ be the collection of all finite legal paths $\omega = (\omega_0, \dots ,\omega_\ell)$ with $\omega_0 = \eta$, including $\ell = 0$. For such a path put
\[
P_j (\omega) = \left\{ \bm x \in I : \omega_j (\bm x ) = 1 \right\}, \qquad M(\omega) = \bigcup_{j=0}^\ell P_j (\omega), \qquad M = \bigcup_{\omega \in \Xi(\eta)} M(\omega).
\]
We first show that
\begin{equation}
M=I.\label{eq:claim1}
\end{equation}
Clearly $\emptyset \ne P \subset M \subset I$. Every site of $I \setminus M$ is vacant in every reachable configuration; a legal jump into such a site would contradict the definition of $M$. Hence no simultaneous occupation or legal jump can occur across a boundary edge between $M$ and $I \setminus M$. Boundary edges between $M$ and $I^c$ are covered by the island property of $I$. Thus $M$ is an island, and minimality gives $M=I$.

Fix a finite path $\omega$ and a connected component $C$ of $M(\omega)$. For $j \in \llbracket 0,\ell \rrbracket$, define
\[
P_j^C = P_j(\omega) \cap C.
\]
Both endpoints of each particle jump inside $I$ belong to the same connected component of $M(\omega)$. No particle crosses the boundary of $I$. It follows that
\[
|P_j^C| = |P \cap C| = \sum_{\bm z \in C} \eta(\bm z), \qquad j \in \llbracket 0,\ell \rrbracket.
\]

We claim that any elementary box intersecting $C$ contains a particle from $C$ at the end of the path:
\[
\mathcal B_{\bm x} \cap C \ne \emptyset \qquad \Longrightarrow \qquad \mathcal B_{\bm x} \cap P_\ell^C \ne \emptyset.
\]
Indeed, such a box is visited by a particle from $C$ at some step. If its last particle from $C$ were subsequently to leave, say by $\bm z \to \bm z + \bm v$, then the facilitating site $\bm z - \bm v$ would still lie in $\mathcal B_{\bm x}$. The facilitator belongs to $I$, since it is occupied and adjacent to the occupied site $\bm z \in I$. It also belongs to $M(\omega)$ and is adjacent to $\bm z \in C$, so it belongs to $C$. This particle remains in the box after the jump, a contradiction. This proves the claim.

Let $A_C = \{ \bm x \in \mathbb T_N^d : \mathcal B_{\bm x} \cap C \ne \emptyset \}$. Every $\bm z \in C$ is an anchor in $A_C$, and every site belongs to exactly $2^d$ boxes indexed by their anchors. Therefore
\[
|C| \le |A_C| \le \sum_{\bm x \in A_C} |\mathcal B_{\bm x} \cap P_\ell^C | \le \sum_{\bm z \in P_\ell^C} \sum_{\bm x \in \mathbb T_N^d} {\bf 1}_{\{ \bm z \in \mathcal B_{\bm x} \}} = 2^d |P_\ell^C| = 2^d \sum_{\bm z \in C} \eta(\bm z).
\]
Thus $C$ is dense. By \eqref{eq:claim1}, $I$ is a union of these dense connected components, taken over all finite legal paths. Since $I$ is finite, a finite subcollection covers it. Hence $I$ is thick.
\end{proof}

\begin{lem}
\label{l2}Suppose that $k\ge1$ and that $E_{1},E_{2},\dots,E_{k}\subset\mathbb{T}^{d}_{N}$ are nonempty connected sets such that $E:=\bigcup^{k}_{i=1}E_{i}$ is also connected. Then, there exists a subset $S\subset\{1,2,\dots,k\}$ satisfying the following three conditions:

\begin{enumerate}
\item The union $\bigcup_{i\in S}E_{i}$ is a connected set.

\item For any distinct $i_{1},i_{2},i_{3}\in S$, we have $E_{i_{1}}\cap E_{i_{2}}\cap E_{i_{3}}=\emptyset$.

\item We have $|\bigcup_{i\in S}E_{i}|\ge\frac{1}{2}|E|^{1/d}$.

\end{enumerate}

\end{lem}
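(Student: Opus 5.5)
The plan is to take $S$ to be a shortest chain of sets joining two far-apart points of $E$. The no-triple-intersection property then comes from minimality of the chain, and the size bound comes from a diameter estimate. I do not expect to need a separate case for a single large $E_i$.

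\emph{Step 1 (an overlap graph).} Build a graph $G$ on the index set $\{1,\dots,k\}$. Declare $i\ne j$ adjacent if $E_i\cap E_j\ne\emptyset$ or if some site of $E_i$ is a nearest neighbour of some site of $E_j$. Since each $E_i$ is connected and $E$ is connected, $G$ is connected. Indeed, if the indices split into two nonempty classes with no edges between them, the two corresponding unions would be disjoint and would have no nearest-neighbour edge between them, which would disconnect $E$. By the same reasoning, for any path $i_1,\dots,i_m$ in $G$ the union $\bigcup_{r}E_{i_r}$ is connected: consecutive sets either overlap or touch, and each set is itself connected. This gives condition (1) for any path.

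\emph{Step 2 (choice of endpoints and a geodesic).} Let $\mathrm{dist}$ denote the graph distance on $\mathbb{T}^d_N$, and take $\bm{x},\bm{y}\in E$ maximizing $D:=\max_{\bm{z}\in E}\|\bm{z}-\bm{x}\|_\infty$, the $\ell^\infty$ distance on the torus, over $\bm{y}$ with $\bm{x}$ fixed. Choose indices $a\ni\bm{x}$ and $b\ni\bm{y}$ with $\bm{x}\in E_a$, $\bm{y}\in E_b$, and let $S=\{i_1=a,\dots,i_m=b\}$ be the vertices of a shortest path from $a$ to $b$ in $G$. Because the path is a geodesic, non-consecutive vertices $i_r,i_s$ with $|r-s|\ge2$ are not adjacent in $G$; in particular $E_{i_r}\cap E_{i_s}=\emptyset$. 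Among any three distinct indices of $S$, at least two are non-consecutive on the path, so their sets are disjoint and the triple intersection is empty. This gives condition (2).

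\emph{Step 3 (size bound).} The set $U:=\bigcup_{i\in S}E_i$ is connected and contains $\bm{x}$ and $\bm{y}$. A nearest-neighbour path changes the $\ell^\infty$ distance to $\bm{x}$ by at most one per step. Hence a path inside $U$ from $\bm{x}$ to $\bm{y}$ visits at least $D+1$ distinct sites, so $|U|\ge D+1$. On the other hand, $E$ lies in the $\ell^\infty$ ball of radius $D$ around $\bm{x}$ on the torus, which has at most $(2D+1)^d$ sites. Thus $2D+1\ge|E|^{1/d}$, and
\[
|U|\ge D+1\ge\tfrac12\bigl(|E|^{1/d}+1\bigr)\ge\tfrac12|E|^{1/d},
\]
which is condition (3).

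The main point needing care is Step 3 on the torus, where $\ell^\infty$ distances wrap around. The argument only uses two facts: the ball count $(2D+1)^d$, which is an upper bound even when the ball wraps, and the $1$-Lipschitz property of the distance to $\bm{x}$ along nearest-neighbour steps. Both hold verbatim on $\mathbb{T}^d_N$. The other point to check is the case $a=b$, which gives $S=\{a\}$; it is covered, since conditions (1) and (2) are then trivial and Step 3 still applies.
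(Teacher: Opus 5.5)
Your proof is correct. Steps 1 and 2 are essentially the paper's construction. The paper also takes a minimal-length sequence of indices whose consecutive sets have connected union, which is your adjacency in $G$. That sequence runs from a set containing $\mathfrak{l}$ to one containing $\mathfrak{r}$, and minimality forces non-consecutive sets to be disjoint.

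Where you differ is in the choice of endpoints and in the proof of condition (3).

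\textbf{The paper's route.} It projects $E$ onto the coordinate axes, with a case split:
\begin{itemize}
\item If every projection $J_p$ is a proper arc, then $|E|\le\prod_p|J_p|$, so some $|J_1|\ge|E|^{1/d}$. The chain joining points of $E$ lying over the two ends of $J_1$ projects onto all of $J_1$, which gives the stronger bound $|U|\ge|E|^{1/d}$.
\item If some projection is all of $\mathbb{T}_N$, a separate case picks points with first coordinates $0$ and $\lfloor (N-1)/2\rfloor$. It then uses that any arc of $\mathbb{T}_N$ containing both has at least $N/2\ge\frac12|E|^{1/d}$ sites.
\end{itemize}

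\textbf{Your route.} You use the torus $\ell^\infty$ eccentricity $D$ of a point $\bm{x}\in E$. The ball bound $|E|\le(2D+1)^d$, combined with the $1$-Lipschitz property of the distance to $\bm{x}$ along nearest-neighbour steps, gives $|U|\ge D+1\ge\frac12(|E|^{1/d}+1)$ in one stroke.

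\textbf{Comparison.} Your argument is uniform in the geometry and avoids the wrap-around case distinction entirely, at the cost of the factor $\frac12$ in every case. The paper's argument gets $|E|^{1/d}$ when $E$ does not wrap around the torus. That extra strength is not used in the proof of Theorem \ref{thm:key}, so your version serves the application equally well.
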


\begin{proof}
Denote by $J_{p}$ the projection of $E$ onto the $p$-th axis: $J_{p}:=\Phi_{p}(E)$ where $\Phi_{p}:\mathbb{T}^{d}_{N}\to\mathbb{T}_{N}$ is defined as $\Phi_{p}(\bm{x}):=x_{p}$. Since $E$ is connected, its projection $J_{p}$ is also connected. We divide into two cases.

\begin{itemize}
\item \textbf{(Case 1)} First, suppose that $J_{p}\subsetneq\mathbb{T}_{N}$ for all $p\in\llbracket1,d\rrbracket$. In this case, clearly $E\subset\prod^{d}_{p=1}J_{p}$ thus 
\[
|E|\le\prod^{d}_{p=1}|J_{p}|.
\]
Without loss of generality, we may assume that $|J_{1}|\ge|E|^{1/d}$. Let $\mathfrak{r}\in E$ be any point in $E$ such that $\Phi_{1}(\mathfrak{r})$ is the rightmost point in $J_{1}$ and let $\mathfrak{l}\in E$ be any point such that $\Phi_{1}(\mathfrak{l})$ is the leftmost point in $J_{1}$ (such that $J_{1}$ equals the interval from $\Phi_{1}(\mathfrak{l})$ to $\Phi_{1}(\mathfrak{r})$ in the increasing order). Define $\mathcal{G}$ as the collection of all possible index sequences $(j_{0},j_{1},\dots j_{m})$ which satisfy: $\mathfrak{l}\in E_{j_{0}}$, $\mathfrak{r}\in E_{j_{m}}$, and $E_{j_{q}}\cup E_{j_{q+1}}$ is connected for all $q\in\llbracket0,m-1\rrbracket$. First, we prove that $\mathcal{G}$ is nonempty. Since $E$ is a connected set, there exists a path $\mathfrak{l}=\bm{x}_{0},\dots,\bm{x}_{t}=\mathfrak{r}$ inside $E$, i.e., $\bm{x}_{n}\in E$ and $\bm{x}_{n}\sim\bm{x}_{n+1}$ for all $n$. Taking any $j_{n}\in\llbracket1,k\rrbracket$ such that $\bm{x}_{n}\in E_{j_{n}}$ for each $n$ implies that $(j_{0},j_{1},\dots,j_{t})\in\mathcal{G}$, thus $\mathcal{G}$ is nonempty. Choose an element $(s_{0},s_{1},\dots,s_{m})\in\mathcal{G}$ with the minimal length $m$. We now show that the index set $S:=\{s_{0},s_{1},\dots,s_{m}\}$ satisfies the required conditions. First, by construction, $\bigcup^{m}_{i=0}E_{s_{i}}$ is connected. Second, by the minimality of $m$, it must hold that $E_{s_{q}}\cap E_{s_{r}}=\emptyset$ for any $0\le q<r\le m$ with $r-q>1$; otherwise, we would have $E_{s_{q}}\cap E_{s_{r}}\ne\emptyset$ thus $E_{s_{q}}\cup E_{s_{r}}$ is connected, and we could find a shorter sequence $(s_{0},\dots,s_{q},s_{r},\dots,s_{m})\in\mathcal{G}$, contradicting the minimality. This ensures that no three distinct indices $s,s',s''\in S$ satisfy $E_{s}\cap E_{s'}\cap E_{s''}\ne\emptyset$, proving the second condition. Lastly, since $\Phi_{1}(\bigcup_{i\in S}E_{i})$ is a connected subset of $J_{1}$ which contains both $\Phi_1 (\mathfrak{l})$ and $\Phi_1 (\mathfrak{r})$ we have $\Phi_{1}(\bigcup_{i\in S}E_{i})=J_{1}$, thus we obtain 
\[
\frac{1}{2}|E|^{1/d}\le|E|^{1/d}\leq|J_{1}|\le\left|\bigcup_{i\in S}E_{i}\right|.
\]

\item \textbf{(Case 2)} Next, if there exists $p\in\llbracket1,d\rrbracket$ such that $J_{p}=\mathbb{T}_{N}$, then without loss of generality suppose that $J_{1}=\mathbb{T}_{N}$. Clearly, $|J_{1}|=N=|\mathbb{T}^{d}_{N}|^{1/d}\ge|E|^{1/d}$. Then, we may set $\mathfrak l,\mathfrak r \in E$ such that $\Phi_1(\mathfrak{l})=0$, $\Phi_1(\mathfrak{r})=\lfloor\frac{N-1}{2}\rfloor$, and apply the exact same idea as above to find a set of indices $S$ which satisfies the desired three conditions. The only difference exists at the last step, where a connected subset of $\mathbb{T}_{N}$ which contains both $0$ and $\lfloor\frac{N-1}{2}\rfloor$ now has size at least $\frac{N}{2}$, thus 
\[
\frac{1}{2}|E|^{1/d}\leq\frac{N}{2}\le\left|\bigcup_{i\in S}E_{i}\right|,
\]
as desired.
\end{itemize}
\end{proof}

Set
\begin{equation}
\gamma := 2^{-d-1}, \qquad f(d):= \gamma \exp\left(-\frac{1+\log2+\log(2d-1)}{\gamma}\right).\label{eq:fd-def}
\end{equation}
For each $\eta\in\Sigma_{N}$ and nonempty $F\subset\mathbb{T}^{d}_{N}$, denote by ${\bf d}_{F}(\eta)$ the local density of $\eta$ in $F$: 
\[
{\bf d}_{F}(\eta):=\frac{\sum_{\bm{x}\in F}\eta(\bm{x})}{|F|}.
\]

\begin{lem}
\label{l3}If $0<\rho<f(d)$, there exists $c_{1}=c_{1}(\rho,d)>0$ such that 
\begin{equation}
\lim_{N\to\infty}\nu^{N}_{\rho}\left(\eta\in\Sigma_{N}:\exists\text{ connected}\enspace F\subset\mathbb{T}^{d}_{N}\quad\text{such that}\enspace|F|\ge c_{1}\log N,\enspace{\bf d}_{F}(\eta)\ge\gamma\right)=0.\label{eq:l3}
\end{equation}
\end{lem}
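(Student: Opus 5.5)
The plan is a union bound over connected sets (lattice animals) combined with a Chernoff bound for the binomial count of particles in a fixed set. Fix a size $n\ge c_1\log N$. First I count the connected sets $F\subset\mathbb{T}^d_N$ with $|F|=n$. Each such set contains some site; choosing it in at most $N^d$ ways, the number of connected sets of size $n$ containing a given site in a graph of maximal degree $2d$ is at most $(e(2d-1))^n$. This is the standard lattice-animal bound: a spanning tree can be explored from the root, and the count of subtrees of size $n$ containing a vertex in a graph of degree $\Delta$ is at most $(e(\Delta-1))^{n}$. So the number of such sets is at most $N^d\,(e(2d-1))^n$. I would also note that $n\le N^d$ trivially, so only finitely many sizes contribute.

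Next, for a fixed $F$ with $|F|=n$, under $\nu^N_\rho$ the count $\sum_{\bm x\in F}\eta(\bm x)$ is $\mathrm{Bin}(n,\rho)$. Since $\rho<f(d)<\gamma$, the crude bound
\[
\nu^N_\rho\bigl({\bf d}_F\ge\gamma\bigr)\le\binom{n}{\lceil\gamma n\rceil}\rho^{\lceil\gamma n\rceil}\le\Bigl(\frac{e\rho}{\gamma}\Bigr)^{\gamma n}
\]
applies, obtained by a union over the choices of $\lceil\gamma n\rceil$ occupied sites together with $\binom{n}{k}\le(en/k)^k$. Because $e\rho/\gamma<1$, the rounding of $\gamma n$ only helps.

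Combining the two estimates, the probability in \eqref{eq:l3} is at most
\[
\sum_{n\ge c_1\log N} N^d\exp\Bigl(n\bigl[1+\log(2d-1)+\gamma\log(e\rho/\gamma)\bigr]\Bigr).
\]
The bracket equals $\gamma\bigl(\log(\rho/\gamma)+(1+\gamma+\log(2d-1))/\gamma\bigr)$. The hypothesis $\rho<f(d)$ means $\log(\rho/\gamma)<-(1+\log2+\log(2d-1))/\gamma$, and since $\gamma\le 1/8<\log 2$, the bracket is strictly negative, say $-\kappa$ with $\kappa=\kappa(\rho,d)>0$. The sum is then at most $C N^d e^{-\kappa c_1\log N}$, and choosing $c_1>d/\kappa$ makes it tend to $0$.

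The only step needing care is the animal count with the exact constant $e(2d-1)$, so that it matches the definition \eqref{eq:fd-def} of $f(d)$. The slack $\log 2$ in $f(d)$ provides room: one could even use the cruder bound $(2e(2d-1))^n$, for instance from encoding a depth-first traversal, or absorb the rounding of $\gamma n$, and the bracket would still be negative.
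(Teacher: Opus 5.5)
Your proof is correct and is essentially the paper's argument: a union bound over connected sets with the count $N^d(e(2d-1))^{n}$, combined with a binomial tail bound. The only difference is that you use the elementary estimate $\binom{n}{k}\rho^{k}\le(e\rho/\gamma)^{\gamma n}$ in place of the exact rate $h_\rho(\gamma)$, and this still leaves the per-site exponent below $\gamma-\log 2<0$.

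One correction to your closing remark: the cruder count $(2e(2d-1))^{n}$ would use up the entire $\log 2$ slack in $f(d)$. For $\rho$ close to $f(d)$ the exponent would then only be bounded by $\gamma>0$, so the union bound no longer closes, and the sharp constant $e(2d-1)$ is genuinely needed.
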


\begin{proof}
By a standard connected-set counting bound (cf. \cite[Problem 45]{Bol06}), the number of connected vertex sets of $\mathbb{T}^{d}_{N}$ with $u>1$ vertices is bounded by $N^{d}((2d-1)e)^{u-1}$. By applying the union bound over all such subsets and the large deviation principle for Bernoulli random variables, we obtain
\begin{align*}
\nu^{N}_{\rho} & \left(\eta\in\Sigma_{N}:\exists\text{ connected}\enspace F\subset\mathbb{T}^{d}_{N}\quad\text{such that}\enspace|F|=u,\enspace{\bf d}_{F}(\eta)\ge\gamma\right)\\
 & \le N^{d}((2d-1)e)^{u-1}e^{-uh_{\rho}(\gamma)}\le N^{d}b(\rho,d)^{-u},
\end{align*}
where $h_{\rho}(\gamma)=\gamma\log\frac{\gamma}{\rho}+(1-\gamma)\log\frac{1-\gamma}{1-\rho}$ is the large deviations rate function and $b(\rho,d):=\frac{e^{h_{\rho}(\gamma)}}{(2d-1)e}$. Since $0<\rho<f(d)<\gamma$ (cf. \eqref{eq:fd-def}), we have: 
\begin{align*}
 & \log b(\rho,d)\ge h_{f(d)}(\gamma)-1-\log(2d-1)\\
 & >\gamma\log\frac{\gamma}{f(d)}+(1-\gamma)\log(1-\gamma)-1-\log(2d-1)=\log2+(1-\gamma)\log(1-\gamma)>0.
\end{align*}
In the second inequality, we used the fact that $\log\frac{\gamma}{f(d)}=2^{d+1}(1+\log2+\log(2d-1))$. Therefore, taking the sum over all possible sizes $u\ge c_{1}\log N$, we may bound the probability in the left-hand side of \eqref{eq:l3} above with 
\[
\sum_{u\ge c_{1}\log N}N^{d}b(\rho,d)^{-u}\le N^{d}\frac{b(\rho,d)^{1-c_{1}\log N}}{b(\rho,d)-1}\le\frac{b(\rho,d)}{b(\rho,d)-1}\frac{1}{N^{c_{1}\log b(\rho,d)-d}}.
\]
So, by taking $c_{1}=c_{1}(\rho,d)=\frac{2d}{\log b(\rho,d)}>\frac{d}{\log b(\rho,d)}$, the right-hand side tends to $0$ as $N\to\infty$.
\end{proof}

\begin{proof}
[Proof of Theorem \ref{thm:key}] Define $\rho_{0}:=f(d)$, take $\rho\in(0,\rho_{0})$, and let $c_{0}(\rho,d):=(2c_{1})^{d}$, where $f(d)$ and $c_{1}$ are the constants from Lemma \ref{l3}, and take $\eta\sim\nu^{N}_{\rho}$. If $\mathscr{I}(\eta)$ is not $c_{0}(\log N)^{d}$-nice, then there exists $I\in\mathscr{I}(\eta)$ such that $|I|>c_{0}(\log N)^{d}$.
Since an initially empty minimal island is a singleton and $|I|>c_0(\log N)^d>1$ for sufficiently large $N$, the island $I$ contains at least one particle. Hence, by Lemma \ref{l1}, $I$ is thick. By the definition of thickness and Lemma \ref{l2}, there exists a connected set $\widetilde{I}\subset I$ such that $\widetilde{I}=H_{1}\cup\cdots\cup H_{r}$ where $H_{1},H_{2},\dots,H_{r}$ are dense, 
\[
H_{i_{1}}\cap H_{i_{2}}\cap H_{i_{3}}=\emptyset\qquad\text{for any distinct}\quad i_{1},i_{2},i_{3},
\]
and $|\widetilde{I}|\geq\frac{1}{2}|I|^{1/d}>c_{1}\log N$. Since each $H_{i}$ is dense and any particle in $\widetilde{I}$ belongs to at most two among $H_{1},\dots,H_{r}$, we obtain that 
\[
{\bf d}_{\widetilde{I}}(\eta)=\frac{\sum_{\bm{x}\in\widetilde{I}}\eta(\bm{x})}{|\widetilde{I}|}\ge\frac{\sum^{r}_{i=1}\sum_{\bm{x}\in H_{i}}\eta(\bm{x})}{2|\widetilde{I}|}=\frac{\sum^{r}_{i=1}|H_{i}|{\bf d}_{H_{i}}(\eta)}{2|\widetilde{I}|}\ge \gamma.
\]
Thus, Lemma \ref{l3} completes the proof.
\end{proof}

The event given in Lemma \ref{l3} is increasing in the initial occupation variables. Thus, we also have the absorption and upper-bound extension under stochastic domination
stated in Remark \ref{rem:thm2}.

\subsection{Absorbing Property at Low Density}
By Theorem \ref{thm:key}, with high probability, we may fix an initial configuration $\eta\sim\nu^{N}_{\rho}$ such that $\mathscr{I}(\eta)$ is $c_{0}(\log N)^{d}$-nice. Under $\mathbb P_\eta^N$, the processes restricted to the islands evolve independently. Write 
\[
\mathscr{I}(\eta)=\{I_{1},\dots,I_{m}\},
\]
such that $|I_{i}|\le c_{0}(\log N)^{d}$ for $i\in\llbracket1,m\rrbracket$, where $m=|\mathscr{I}(\eta)|$. Then, by definition, we may interpret the global FEP system $\eta^{N}_{t}$ as $m$ \emph{independent} FEP systems  
\begin{equation}
(\eta^{N,1}_{t},\dots,\eta^{N,m}_{t}),\label{eq:low-density-dec}
\end{equation}
where each process $\eta^{N,i}_{t}$ is defined on the subset $I_{i}$. Indeed, the $m$ processes are independent since by Definition \ref{def:island}, any two particles in different islands cannot facilitate each other and particles cannot jump between different islands. In this sense, we have 
\begin{equation}
\tau_{{\rm tr}}=\tau^{1}_{{\rm tr}}\vee\cdots\vee\tau^{m}_{{\rm tr}},\label{eq:t-rec-dec}
\end{equation}
where $\tau^{i}_{{\rm tr}}$ is the transience time of the $i$-th system $\eta^{N,i}_{t}$ on $I_{i}$. Let us write $K_{i}:=\sum_{\bm{x}\in I_{i}}\eta(\bm{x})$, the number of particles on $I_{i}$.

Now, fix $i\in\llbracket1,m\rrbracket$. Since $|I_{i}|\le c_{0}(\log N)^{d}<N$, if we project $I_i$ onto each axis there exists a missing coordinate. This means that we can embed $I_i$ into the lattice $\mathbb{Z}^{d}$, and consider $\eta^{N,i}_{t}$ as a particle system on $\{0,1\}^{\mathbb{Z}^{d}}$. Notice that from any particle configuration on $I_{i}$ reachable from $\eta$, one may simply push all particles in the $+\bm{e}_{k}$, $k\in\llbracket1,d\rrbracket$ directions as much as possible, and thereby obtain an absorbing state within finite steps. This implies that all recurrent states are absorbing states, thus at time $\tau^{i}_{{\rm tr}}$ the process $\eta^{N,i}_{t}$ arrives at an absorbing state. Applying this fact to each $i\in\llbracket1,m\rrbracket$, we obtain that 
\begin{equation}
\lim_{N\to\infty}\mathbb{P}^{N}_{\nu_{\rho}}\left[\eta^{N}_{\tau_{{\rm tr}}}\in\Sigma^{{\rm abs}}_{N}\right]=1.\label{eq:absorbing}
\end{equation}

\subsection{Upper Bound of Theorem \ref{thm2}}
In this subsection, we prove the upper bound part of Theorem \ref{thm2}. We continue to interpret as in \eqref{eq:low-density-dec} and consider each system $\eta^{N,i}_{t}$ on $\{0,1\}^{\mathbb{Z}^{d}}$ with $K_{i}$ particles.

\begin{lem}
\label{l4}For each $i\in\llbracket1,m\rrbracket$, 
\[
\mathbb{E}^{N}_{\eta}\left[\tau^{i}_{{\rm tr}}\right]\le\frac{|I_{i}|^{3}}{4},
\]
and, for every $t\ge0$,  
\begin{equation}
\mathbb{P}^{N}_{\eta}\left(\tau^{i}_{{\rm tr}}\ge t\right)\le2\exp\left\{ -\frac{(2\log2)t}{|I_{i}|^{3}}\right\} .\label{eq:island-tail-revised}
\end{equation}
\end{lem}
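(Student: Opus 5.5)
The plan is to use a spatial second-moment Lyapunov function on the island, show that it has drift bounded below before absorption, and get the tail bound by iterating Markov's inequality. Fix $i$ and write $n:=|I_i|$. As noted before the lemma, embed $I_i$ in $\mathbb{Z}^d$, so that the process $\eta^{N,i}_t$ is an FEP on a finite subset of $\mathbb{Z}^d$ with no periodic wrap-around. Its recurrent states are absorbing, so $\tau^i_{\rm tr}$ is the absorption time. Let $J_k$ be the projection of $I_i$ onto the $k$-th axis. Translate coordinates so that each $J_k$ is an interval starting at $0$. Define
\[
\Phi(\xi):=\sum_{\bm{x}\in I_i}\xi(\bm{x})\,|\bm{x}|^2=\sum_{\bm{x}\in I_i}\xi(\bm{x})\sum_{k=1}^d x_k^2 .
\]

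\emph{Step 1: drift.} I will show that $\mathcal{L}\Phi(\xi)\ge 4$ for every reachable non-absorbing $\xi$. Fix a line parallel to $\bm{e}_k$ and a maximal block of consecutive occupied sites on it, with first coordinates from $l$ to $r$, where $r>l$. By maximality the sites at $r+1$ and $l-1$ are empty. Hence the right end jumps to $r+1$ at rate $1$, facilitated by $r-1$, and the left end jumps to $l-1$ at rate $1$, facilitated by $l+1$. These jumps stay inside $I_i$ by the island property. They change $\Phi$ by $2r+1$ and $-2l+1$ respectively, so together they contribute $2(r-l)+2\ge 4$.

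Every legal jump is of exactly this form: a jump $\bm{x}\to\bm{x}+\bm{v}$ requires $\bm{x}-\bm{v},\bm{x}$ occupied and $\bm{x}+\bm{v}$ empty, so $\bm{x}$ is the end of a maximal block of length at least $2$ in direction $\bm{v}$. Summing over all blocks and all directions therefore gives $\mathcal{L}\Phi=\sum_{\text{blocks}}(2(r-l)+2)$, which is $\ge 4$ unless $\xi$ is absorbing.

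\emph{Step 2: range bound.} Dynkin's formula (with optional stopping, since $\tau^i_{\rm tr}$ has finite expectation on a finite state space) gives
\[
4\,\mathbb{E}_\xi[\tau^i_{\rm tr}]\le \mathbb{E}_\xi[\Phi(\eta_{\tau})]-\Phi(\xi)\le \max\Phi-\min\Phi,
\]
uniformly in the reachable starting state $\xi$. Since $I_i$ is connected, $\sum_k(|J_k|-1)\le n-1$. With the chosen origin, each particle contributes at most $\sum_k(|J_k|-1)^2\le(n-1)^2$, and there are $K_i\le n$ particles. So the range is at most $n(n-1)^2\le n^3$, which yields $\mathbb{E}_\xi[\tau^i_{\rm tr}]\le n^3/4$.

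This counting is the step I expect to be the main obstacle. The crude bound must be kept at $n^3$ and not $d\,n^3$. If the connectivity estimate on the projections turns out to be too tight, I would instead translate separately in each coordinate so that $\min_k x_k=0$ on $I_i$, and use the elementary inequality $\sum_k a_k^2\le(\sum_k a_k)^2$ to absorb the factor $d$.

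\emph{Step 3: tail.} Because the bound $\mathbb{E}_\xi[\tau^i_{\rm tr}]\le n^3/4$ is uniform over all reachable states $\xi$, Markov's inequality gives $\mathbb{P}_\xi(\tau^i_{\rm tr}\ge n^3/2)\le 1/2$. Applying the Markov property at the times $jn^3/2$ yields $\mathbb{P}_\eta(\tau^i_{\rm tr}\ge jn^3/2)\le 2^{-j}$. For general $t$, take $j=\lfloor 2t/n^3\rfloor$ to get $2^{-j}\le 2\cdot 2^{-2t/n^3}=2\exp\{-(2\log 2)t/n^3\}$, which is \eqref{eq:island-tail-revised}.
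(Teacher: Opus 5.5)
Your proposal is correct and follows the paper's proof: the same spatial second-moment Lyapunov function, the same drift of $2\ell\ge4$ from the two outward end-jumps of each maximal occupied run, Dynkin's formula with range bound $n(n-1)^2\le n^3$, and the same Markov-property iteration at multiples of $B=n^3/2$. The one cosmetic difference is the center. You use the corner of the bounding box and bound the range via $\sum_k(|J_k|-1)\le n-1$, whereas the paper centers at a point $\bm a\in I_i$ and uses $\|\bm x-\bm a\|_2\le n-1$ along a path in $I_i$; both give the same constant.
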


\begin{proof}
Fix $\bm{a}=(a_{1},\dots,a_{d})\in I_{i}$ and define the spatial second moment  
\[
Q(\xi):=\sum_{\bm{x}\in I_{i}}\xi(\bm{x})\|\bm{x}-\bm{a}\|^{2}_{2}\qquad\text{for}\quad\xi\in\{0,1\}^{\mathbb{Z}^{d}}.
\]
Every $\bm{x}\in I_{i}$ can be joined to $\bm{a}$ by a path in $I_{i}$ of at most $|I_{i}|-1$ edges. Hence, for every $t\ge0$, recalling that $K_{i}$ denotes the number of particles, 
\begin{equation}
0\le Q(\eta^{N,i}_{t})=\sum_{\bm{x}\in I_{i}}\eta^{N,i}_{t}(\bm{x})\|\bm{x}-\bm{a}\|^{2}_{2}\le K_{i}(|I_{i}|-1)^{2}\le|I_{i}|^{3}.\label{eq:coordinate-bound-revised}
\end{equation}
Fix a coordinate direction $k\in\llbracket1,d\rrbracket$ and consider a maximal occupied run of length $\ell\ge2$ along a line parallel to $\bm{e}_{k}$. Its two outward endpoint jumps each have rate one, and both stay in $I_{i}$. If the endpoint coordinates relative to $a_{k}$ are $u$ and $v$, their combined contribution to $\mathcal{L}_{N}Q$ is  
\[
\left[(u-1)^{2}-u^{2}\right]+\left[(v+1)^{2}-v^{2}\right]=2(v-u+1)=2\ell.
\]
Denote by $P^{i}_{t}$ the set of positions of the particles at time $t$, such that $|P^{i}_{t}|=K_{i}$. Summing over all runs and all coordinate directions gives  
\begin{equation}
(\mathcal{L}_{N}Q)(\eta^{N,i}_{t})=2\sum^{d}_{k=1}\sum_{\bm{x}\in P^{i}_{t}}{\bf 1}\left\{ \bm{x}-\bm{e}_{k}\in P^{i}_{t}\quad\text{or}\quad\bm{x}+\bm{e}_{k}\in P^{i}_{t}\right\} \ge4,\qquad t<\tau^{i}_{{\rm tr}}.\label{eq:quadratic-drift-revised}
\end{equation}
The inequality holds because a non-absorbing configuration contains an occupied run of length at least two. Applying Dynkin's formula at $t\wedge\tau^{i}_{{\rm tr}}$ and using \eqref{eq:coordinate-bound-revised}, we obtain  
\begin{equation}
4\mathbb{E}^{N}_{\eta}\left[t\wedge\tau^{i}_{{\rm tr}}\right]\le\mathbb{E}^{N}_{\eta}\left[Q(\eta^{N,i}_{t\wedge\tau^{i}_{{\rm tr}}})\right]-Q(\eta)\le|I_{i}|^{3}.\label{eq:island-mean-revised}
\end{equation}
Letting $t\to\infty$ gives the asserted bound on the expectation. Mind that this estimate is uniform over all configurations reachable from $\eta$.

Let $B:=|I_{i}|^{3}/2$ and let $(\mathcal{F}_{t})_{t\ge0}$ be the natural filtration. The Markov property and Markov's inequality therefore imply 
\[
\mathbb{P}^{N}_{\eta}\left(\tau^{i}_{{\rm tr}}\ge(n+1)B\mid\mathcal{F}_{nB}\right)\le\frac{1}{2}{\bf 1}\left\{ \tau^{i}_{{\rm tr}}\ge nB\right\} ,\qquad n\ge0.
\]
Taking expectations and iterating, we obtain 
\[
\mathbb{P}^{N}_{\eta}\left(\tau^{i}_{{\rm tr}}\ge nB\right)\le2^{-n}.
\]
Taking $n=\lfloor t/B\rfloor$ proves \eqref{eq:island-tail-revised}.
\end{proof}

\begin{lem}
\label{lem:thm2-UB}For $\rho_{0}$ as chosen in Theorem \ref{thm:key} and $\rho\in(0,\rho_{0})$, 
\[
\lim_{N\to\infty}\mathbb{P}^{N}_{\nu_{\rho}}\left[\tau_{{\rm tr}}<c(\log N)^{3d+1}\right]=1.
\]
\end{lem}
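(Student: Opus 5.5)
The plan is to condition on the initial configuration, use the island decomposition to reduce the global transience time to a maximum of independent island transience times, and then apply a union bound over islands with the tail estimate of Lemma \ref{l4}. Let $\mathcal{G}_N$ be the event that $\mathscr{I}(\eta)$ is $c_0(\log N)^d$-nice. By Theorem \ref{thm:key}, $\nu^N_\rho(\mathcal{G}_N)\to1$. It therefore suffices to show that
\[
\sup_{\eta\in\mathcal{G}_N}\mathbb{P}^N_\eta\bigl[\tau_{\rm tr}\ge c(\log N)^{3d+1}\bigr]\to0 .
\]
The annealed statement then follows by writing $\mathbb{P}^N_{\nu_\rho}[\cdot]\le\nu^N_\rho(\mathcal{G}_N^c)+\sup_{\eta\in\mathcal{G}_N}\mathbb{P}^N_\eta[\cdot]$.

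Fix $\eta\in\mathcal{G}_N$ with minimal islands $I_1,\dots,I_m$. By \eqref{eq:t-rec-dec}, $\tau_{\rm tr}=\max_i\tau^i_{\rm tr}$. Independence is not even needed here, since a union bound suffices:
\[
\mathbb{P}^N_\eta[\tau_{\rm tr}\ge t]\le\sum_{i=1}^m\mathbb{P}^N_\eta[\tau^i_{\rm tr}\ge t]\le 2m\exp\Bigl\{-\frac{(2\log 2)t}{\max_i|I_i|^3}\Bigr\},
\]
using \eqref{eq:island-tail-revised}. The islands partition $\mathbb{T}^d_N$, so $m\le N^d$, and on $\mathcal{G}_N$ we have $\max_i|I_i|^3\le c_0^3(\log N)^{3d}$. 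Taking $t=c(\log N)^{3d+1}$, the bound becomes
\[
2N^d\exp\{-(2\log2)\,c\,c_0^{-3}\log N\}=2N^{d-2c\log2/c_0^3},
\]
which tends to $0$ as soon as $c>d\,c_0^3/(2\log 2)$. This fixes the constant $c=c(\rho,d)$.

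The only point requiring some care is the justification that $\tau^i_{\rm tr}$ (the hitting time of the recurrent set of the island system) combines correctly into the global $\tau_{\rm tr}$, and that the uniform bound of Lemma \ref{l4} holds for every island simultaneously with a deterministic constant. Both are already supplied by \eqref{eq:t-rec-dec} and the statement of Lemma \ref{l4}. Singleton empty islands have $\tau^i_{\rm tr}=0$ and cause no issue. So the main obstacle is essentially bookkeeping: checking that the union bound over up to $N^d$ islands costs only a polynomial factor, which the exponent $3d+1$ (one extra logarithm) absorbs.
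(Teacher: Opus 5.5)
Your proposal is correct and follows the paper's proof essentially step for step. You restrict to the high-probability event that $\mathscr{I}(\eta)$ is $c_0(\log N)^d$-nice, then combine \eqref{eq:t-rec-dec} with the tail bound \eqref{eq:island-tail-revised} in a union bound over at most $N^d$ islands, and choose $c$ so that the factor $N^d$ is absorbed. The paper fixes $c=(d+1)c_0^3/(2\log 2)$ to obtain the explicit bound $2N^{-1}$, whereas you only require $c>d\,c_0^3/(2\log 2)$; this difference is cosmetic.
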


\begin{proof}
Fix $0<\rho<\rho_{0}$, with $\rho_{0}$ as in Theorem \ref{thm:key}, and write $L_{N}:=c_{0}(\log N)^{d}$. Suppose that $\mathscr{I}(\eta)$ is $L_{N}$-nice, which happens with high probability if $\eta\sim\nu^{N}_{\rho}$ by Theorem \ref{thm:key}. Since $|\mathscr{I}(\eta)|\le N^{d}$, the decomposition \eqref{eq:t-rec-dec}, Lemma \ref{l4}, and the union bound imply  
\begin{equation}
\mathbb{P}^{N}_{\eta}\left(\tau_{{\rm tr}}\ge T\right)\le\sum^{|\mathscr{I}(\eta)|}_{i=1}\mathbb{P}^{N}_{\eta}\left(\tau^{i}_{{\rm tr}}\ge T\right)\le2N^{d}\exp\left\{ -\frac{2(\log2)T}{L^{3}_{N}}\right\} \qquad\text{for any}\quad T\ge0.\label{eq:global-tail-revised}
\end{equation}
Choose a constant $c=(d+1)c^{3}_{0}/(2\log2)$ and set $T=c(\log N)^{3d+1}$, then the right-hand side of \eqref{eq:global-tail-revised} is at most $2N^{-1}$ and thus vanishes as $N\to\infty$. Therefore, Theorem \ref{thm:key} gives 
\[
\mathbb{P}^{N}_{\nu^{N}_{\rho}}\left(\tau_{{\rm tr}}<c(\log N)^{3d+1}\right)\to1\qquad\text{as}\quad N\to\infty.
\]
\end{proof}

\subsection{Lower Bound of Theorem \ref{thm2}}
Finally, we prove the lower bound part and conclude the proof of Theorem \ref{thm2}, i.e., we prove that for some constant $c>0$, 
\begin{equation}
\lim_{N\to\infty}\mathbb{P}^{N}_{\nu_{\rho}}\left[\tau_{{\rm tr}}>c\log N\right]=1.\label{eq:thm2-LB}
\end{equation}
Fix $\rho\in(0,\rho_{0})$. Notice that by a standard large deviations principle, there are at least $b_{N}\ge c_2 N^{d}$ pairwise disjoint fully occupied elementary boxes (cf. \eqref{eq:box-def}) with high probability, as long as $c_2$ is small enough. Let us sample an initial configuration $\eta \sim \nu_\rho^N$ with the desired property. Denote by $H_{1},\dots,H_{b_{N}}$ the first ringings of independent Poisson clocks on the collection of all directed edges inside each of the $b_{N}$ distinct boxes, which represent the facilitating mechanism of the FEP system. Notice that each clock has rate $d2^d$ since there are exactly $d2^d$ directed edges in each box. Moreover, before each clock rings, the elementary box remains fully occupied and thus the resulting configuration cannot be an absorbing state. Therefore, by \eqref{eq:absorbing}, 
\begin{align*}
\mathbb{P}^{N}_\eta \left[\tau_{{\rm tr}}\le c\log N\right] & \le \mathbb{P}^{N}_\eta \left[H_{1}\vee\cdots\vee H_{b_{N}}\le c\log N\right]+o(1)\\
 & =\prod^{b_{N}}_{j=1}\mathbb{P}^{N}_\eta \left[H_{j}\le c\log N\right]+o(1)\le\left(1-N^{-cd2^d}\right)^{c_2 N^{d}}+o(1).
\end{align*}
Therefore, it suffices to choose $c<2^{-d}$ to verify \eqref{eq:thm2-LB} via 
\[
\left(1-N^{-cd2^d}\right)^{c_2 N^{d}}\le e^{-N^{-cd2^d}c_2 N^{d}}=e^{-c_2 N^{d(1-c2^d)}}\to0\qquad\text{as}\quad N\to\infty.
\]

\begin{proof}
[Proof of Theorem \ref{thm2}] It remains to collect \eqref{eq:absorbing}, Lemma \ref{lem:thm2-UB}, and \eqref{eq:thm2-LB}.
\end{proof}

\section{\label{sec5}Proof of Theorem \ref{thm3}}
In this section, we prove the following quantitative version of Theorem \ref{thm3}.

\begin{thm}
\label{thm:plb-main}Fix an integer $d\ge2$ and $\rho\in(1/2,3/4)$, and set  
\begin{equation}
\gamma_{\rho}=\frac{1}{2}\min\left\{ \rho-\frac{1}{2},\frac{3}{4}-\rho\right\} ,\qquad c_{\rho,d}=\frac{\gamma^{2}_{\rho}}{(384e)d(d+2)}.\label{eq:plb-constants}
\end{equation}
Then for all sufficiently large even $N$,  
\begin{equation}
\mathbb{P}^{N}_{\nu_{\rho}}\left(\tau^{N}_{{\rm tr}}\le c_{\rho,d}\frac{N}{\log N}\right)\le N^{-4}+2N^{-3}.\label{eq:plb-main-bound}
\end{equation}
\end{thm}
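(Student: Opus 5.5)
The plan follows the outline in the introduction: a deterministic transience criterion, plus a probabilistic estimate showing that the dynamics satisfies it up to time $T:=c_{\rho,d}N/\log N$.

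\emph{Step 1 (deterministic criterion).} For even $N$, let $\mathcal{C}$ be the set of configurations $\xi$ such that every line $\ell$ parallel to $\bm e_1$ has density $\frac1N\sum_{\bm x\in\ell}\xi(\bm x)\in(\frac12,\frac34)$. I will prove that every $\xi\in\mathcal C$ is transient. The target is the closed set
\[
\mathcal D:=\{\zeta:\ \zeta(\bm x)=0 \text{ whenever } x_1,x_2 \text{ are both even}\}.
\]
Closedness follows by the argument of Claim~\ref{claim:closed}, applied to the two-dimensional sublattice of even $(x_1,x_2)$: a jump into such a site requires its facilitator at distance two, which again has even first and second coordinates, hence is vacant. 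A configuration in $\mathcal D$ has density at most $3/4$ on each pair of adjacent $\bm e_1$-lines, and the fully even lines have density at most $1/2$. The key step is to construct a legal path from $\xi$ into $\mathcal D$ slice by slice, working in each two-dimensional slice spanned by $\bm e_1,\bm e_2$. Since every $\bm e_1$-line has density above $1/2$, each line lies in the one-dimensional supercritical regime. Using the one-dimensional ergodicity of \cite{BESS20} (the line first reaches its active class with no adjacent vacancies, and that class is irreducible), particles inside each line can be rearranged freely, up to a transient prefix.

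Next I pair each even row (the receiver that must be emptied at even $x_1$) with an adjacent odd row (the donor). Using vertical jumps facilitated by the fully occupied stretches that density above $1/2$ guarantees, particles are moved from the even positions of the even row into the odd row. Density below $3/4$ is what leaves the odd row enough room for these particles. Finally, $\xi\notin\mathcal D$ by a counting argument: a line with even $x_2$ and density above $1/2$ must occupy some site with even $x_1$. So $\xi$ reaches a closed set it does not belong to, hence $\xi$ is transient.

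\emph{Step 2 (localization).} Set $T=c_{\rho,d}N/\log N$. Using the graphical (Harris) construction, $\eta_t(\bm x)$ for $t\le T$ depends only on the initial data and clocks inside a ball of radius $r\asymp \log N$ around $\bm x$, except on a bad event where a chain of clock rings of length $r$ occurs within time $T$. A path-counting estimate bounds this event by $\sum_{k\ge r}(2d\cdot 3T\cdot\text{const})^k/k!$. The choice of $c_{\rho,d}$, in particular its factor $e$, should make this at most $N^{-4}$ after a union bound over all sites and over $t$ in a fine time grid. I would also control the times between grid points by the number of clock rings.

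\emph{Step 3 (concentration).} Conditional on the localization event, the line density $\frac1N\sum_{\bm x\in\ell}\eta_t(\bm x)$ is a sum of variables that are $r$-dependent along the line. Particle conservation and translation invariance, together with the stationarity of the mean under the symmetric dynamics started from a product measure, give mean exactly $\rho$. Splitting the line into $O(r)$ residue classes of independent variables and applying Hoeffding to each gives deviation probability $\exp(-cN\gamma_\rho^2/r)$. With $r\asymp\log N$ and a union bound over $dN^{d-1}$ lines and the time grid, this yields $2N^{-3}$.

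On the intersection of the good events, $\eta_t\in\mathcal C$ for all $t\le T$, so $\tau_{\rm tr}>T$. I expect the main obstacle to be Step 1, specifically building the donor–receiver moves legally inside a slice while keeping the other lines consistent.
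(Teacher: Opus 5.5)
\textbf{Step 2 fails as written: the localization radius must be of order $T$, not $\log N$.} Your Step~1 is the paper's Lemma~\ref{lem:plb-transient} in outline: the same closed set $\Lambda_N$, and the same slice-by-slice transfers from even rows into odd rows using one-dimensional ergodicity. Your donor/receiver labels are swapped relative to the action you describe. The details you leave open are settled there in two ways. First, the pairs $(2i,2i+1)$ are processed in order, so the facilitator $(x,2i-1)$ lies in an already-filled odd row; for $i=0$ it lies in row $N-1$, which is rearranged horizontally. Second, the donor is never rearranged once it starts losing particles.

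The gap itself: a radius $r\asymp\log N$ cannot work at time $T=c_{\rho,d}N/\log N$. Dependencies in the graphical construction spread at linear speed. When $r\ll T$, your own path-counting bound $\sum_{k\ge r}(12dT)^k/k!$ is essentially $e^{12dT}\gg 1$, so no choice of the constant $c_{\rho,d}$ makes it small. The estimate $\sum_{k\ge n}(12dT)^k/k!\le\exp((12de)T-n)$ becomes useful only when $r\ge(24de)T+2(d+4)\log N$, i.e.\ $r\asymp N/\log N$. This, and not a light cone of width $\log N$, is where the factor $e$ in $c_{\rho,d}$ comes from.

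\textbf{Consequences for Step 3 and the time grid.} With the correct radius, Step~3 changes qualitatively. The colouring bound over $Q=4r+1\asymp N/\log N$ classes gives only $\exp(-\gamma_\rho^2N/(2Q))$. For the stated $c_{\rho,d}$ this is $N^{-(d+2)}$, not $\exp(-cN/\log N)$. The time scale $N/\log N$ is forced by exactly this competition: the light cone needs $r\gtrsim T$, while concentration needs $N/r\gtrsim\log N$ to survive the union bound over $N^{d-1}$ lines. Your version misses this balance.

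Because the per-time bound is only polynomial, your extra union bound over a time grid of $\asymp N/\log N$ points, plus interpolation, cannot give the stated $N^{-4}+2N^{-3}$ with the stated constant. The same goes for a union bound over the $Q$ colour classes; the paper combines them through H\"older's inequality on the moment generating function instead.

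The grid is also unnecessary. Since $\Sigma^{\rm rec}_N$ is a union of closed classes, $\{\tau^N_{\rm tr}\le T\}=\{\eta^N_T\in\Sigma^{\rm rec}_N\}$ almost surely. It therefore suffices to check the line-density criterion at the single time $T$, which gives $N^d\delta+2N^{d-1}N^{-(d+2)}=N^{-4}+2N^{-3}$.

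\textbf{Two smaller repairs.}
\begin{enumerate}
\item Do not condition on the good localization event, since that destroys the independence you need. Apply the concentration to the localized variables $\widehat Z_x$ unconditionally, note that their means are only within $\delta$ of $\rho$, and add the probability of the bad event.
\item Choose the colour classes so that points in the same class are at torus distance greater than $2r$. Plain residues mod $4r+1$ can fail across the periodic boundary.
\end{enumerate}
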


\begin{proof}
[Proof of Theorem \ref{thm3}] It follows directly from Theorem \ref{thm:plb-main}.
\end{proof}

In the remaining part of the section, we prove Theorem \ref{thm:plb-main}. Hereafter, let us assume that $N$ is sufficiently large and even. Recall that we write $\eta\rightsquigarrow\zeta$ if a finite sequence of legal jumps leads from $\eta$ to $\zeta$, allowing a sequence of length zero. Write $\bm{x}=(x_{1},x_{2},\bm{w})\in\mathbb{T}^{d}_{N}$, where $\bm{w}\in\mathbb{T}^{d-2}_{N}$, and put  
\[
\mathbb{H}=\left\{ \bm{x}\in\mathbb{T}^{d}_{N}:x_{1},x_{2}\enspace\text{are even}\right\} ,\qquad\mathbb{S}=\mathbb{T}^{d}_{N}\setminus\mathbb{H}.
\]
Then $|\mathbb{H}|=N^{d}/4$. Define  
\begin{equation}
\Lambda_{N}=\left\{ \eta\in\Sigma_{N}:\eta(\bm{z})=0\quad\text{for every}\enspace\bm{z}\in\mathbb{H}\right\} .\label{eq:plb-closed-set}
\end{equation}

\begin{lem}
\label{lem:plb-closed-set}The set $\Lambda_{N}$ is closed under the full $d$-dimensional dynamics.

\end{lem}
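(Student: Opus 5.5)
The plan is to mimic the proof of Claim \ref{claim:closed} for even $N$. Since $\Lambda_N$ is a union over particle numbers of sets defined by vacancy constraints, it suffices to show that no single legal jump leads from a configuration $\eta\in\Lambda_N$ to one outside $\Lambda_N$. Argue by contradiction: suppose there are $\eta\in\Lambda_N$, $\bm{x}\in\mathbb{T}^d_N$ and $\bm{v}\in\mathcal{V}$ with $\eta(\bm{x}-\bm{v})=\eta(\bm{x})=1$ and $\eta(\bm{x}+\bm{v})=0$, such that $\eta^{\bm{x},\bm{x}+\bm{v}}\notin\Lambda_N$. Since the jump changes only the sites $\bm{x}$ and $\bm{x}+\bm{v}$, and $\bm{x}$ becomes vacant, this forces the target $\bm{y}:=\bm{x}+\bm{v}$ to lie in $\mathbb{H}$.

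Next, split according to the direction $\bm{v}$. If $\bm{v}\in\{\pm\bm{e}_1,\pm\bm{e}_2\}$, then $\bm{y}-2\bm{v}$ has the same first two coordinates as $\bm{y}$ modulo $2$. This uses that $N$ is even, so that parity is well defined on $\mathbb{T}_N$ and is preserved under shifts by $2$, including across the periodic boundary. Hence $\bm{y}-2\bm{v}=\bm{x}-\bm{v}\in\mathbb{H}$. Because $\eta\in\Lambda_N$, this gives $\eta(\bm{x}-\bm{v})=0$, contradicting the facilitation requirement.

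If instead $\bm{v}\in\{\pm\bm{e}_3,\dots,\pm\bm{e}_d\}$, which can only occur when $d\ge3$, then the jump does not change the first two coordinates. Thus $\bm{x}$ has the same $(x_1,x_2)$ as $\bm{y}$, so $\bm{x}\in\mathbb{H}$. Then $\eta(\bm{x})=0$, contradicting the assumption that the jumping site is occupied.

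Both cases are contradictions, so every legal jump from $\Lambda_N$ stays in $\Lambda_N$, and $\Lambda_N$ is closed. No step is a real obstacle. The only point needing care is the parity argument on the torus, where evenness of $N$ is essential: for odd $N$, the step $\bm{y}-2\bm{v}$ could cross the boundary and change parity, as seen in Claim \ref{claim:closed}.
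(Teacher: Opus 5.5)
Your proof is correct and matches the paper's argument: a jump leaving $\Lambda_N$ must land in $\mathbb{H}$, and because $N$ is even the facilitating site $(\bm{x}+\bm{v})-2\bm{v}$ also lies in $\mathbb{H}$ and is therefore vacant. Your case split is correct but unnecessary. For $\bm{v}\in\{\pm\bm{e}_3,\dots,\pm\bm{e}_d\}$ the facilitator $\bm{x}-\bm{v}$ also has the same first two coordinates as $\bm{y}$, so the paper covers every direction with the single observation $\bm{x}-\bm{v}\in\mathbb{H}$.
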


\begin{proof}
Fix $\xi \in \Lambda_N$ and suppose that a legal jump $\bm x \to \bm x + \bm v$ enters $\mathbb H$. Since $N$ is even,
$\bm x - \bm v = (\bm x + \bm v) - 2 \bm v \in \mathbb H$.
Thus the facilitating site is vacant, contradicting legality. Hence no legal jump leaves $\Lambda_N$.
\end{proof}

\subsection{A Deterministic Criterion for Transience}
Let $\mathcal{E}_{N,k}$ be the set of one-dimensional configurations on $\mathbb{T}_{N}$ with $k$ particles and with no two adjacent vacancies, including across the periodic boundary. For $N/2 < k < N$, this set $\mathcal E_{N,k}$ is the unique recurrent class in the $k$-particle sector of the one-dimensional FEP; see \cite[Section 2.2.1, Lemma 2.1, and Section 3]{BESS20}. We record the properties needed below.

\begin{lem}
\label{lem:plb-one-dimensional}If $N/2<k<N$, every one-dimensional configuration on $\mathbb{T}_{N}$ with $k$ particles can reach $\mathcal{E}_{N,k}$ by a finite sequence of legal facilitated jumps, and the set $\mathcal{E}_{N,k}$ is irreducible. In particular, from any configuration in this set one can place a vacancy, or a particle, at any prescribed site using only such jumps.
\end{lem}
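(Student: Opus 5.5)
The plan is to prove the three assertions of Lemma \ref{lem:plb-one-dimensional} in turn: reachability of $\mathcal{E}_{N,k}$, irreducibility of $\mathcal{E}_{N,k}$, and the placement statement. The first two can also be quoted directly from \cite{BESS20}; I would still give a short self-contained argument.

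\textbf{Reachability.} Fix $\eta$ with $N/2<k<N$ particles. As potential I take the number $D(\eta)$ of adjacent vacancy pairs $\{x,x+1\}$. I will show that if $D(\eta)>0$, some finite sequence of legal jumps strictly decreases $D$.

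Since $k>N/2$, a pigeonhole count over the $N$ edges shows that $\eta$ contains two adjacent particles. Pick a maximal vacancy run of length $m\ge2$ and a maximal particle run of length $\ge2$, and follow the torus from the particle run toward the vacancy run. Call a maximal particle run of length at least two a \emph{doublet block}. The first doublet block preceding a given vacancy run (in one orientation) can be used as follows. Its front particle jumps forward (facilitated by the particle behind it) into the next vacancy. This shortens the block by one and pushes a particle into the gap.

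I would make this precise by induction on the distance between the nearest doublet block and a long vacancy run. Let the block be at $x-1,x$ with $x+1$ empty. If $x+2$ is occupied, the jump $x\to x+1$ creates a new pair $x+1,x+2$, which is one step closer to the gap. If $x+2$ is empty, the jump fills part of a double vacancy. A double vacancy exactly in front of a block is destroyed, so $D$ drops, and no new double vacancy is created except possibly behind the block at $x-1,x$. That happens only when $x-1$ is vacated, which is not the case since $x-1$ stays occupied. Hence $D$ is nonincreasing along the transport of the doublet and strictly decreases on arrival. Iterating gives a path into $\mathcal{E}_{N,k}$.

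\textbf{Irreducibility.} On $\mathcal{E}_{N,k}$, configurations are arrangements of $N-k$ isolated vacancies. A vacancy at $y$ with $y-1,y-2$ occupied can move to $y-1$ via the jump $y-1\to y$, and this stays in $\mathcal{E}_{N,k}$. Since $k>N/2$, there is at least one doublet, so some vacancy can move. I would show that any configuration can reach the canonical one, with vacancies at $1,3,\dots,2(N-k)-1$, by moving vacancies one at a time toward their targets. The vacancy adjacent to a doublet moves first, which frees others. This is the step I expect to be the main fiddly point: the vacancies never need to cross each other, but one must check that the cyclic order can be rotated using the slack $2k-N>0$.

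\textbf{Placement.} Moving a single vacancy across the torus puts a vacancy at any prescribed site. A particle can be placed at any site by choosing a configuration in $\mathcal{E}_{N,k}$ where that site is occupied; such a configuration exists since $k\ge1$.
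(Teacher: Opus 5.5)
The paper gives no proof of this lemma. It simply cites \cite{BESS20}, so your fallback of quoting that reference is exactly the paper's route.

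\textbf{Reachability.} Your self-contained argument is correct. At the front $x$ of a particle run, the jump $x\to x+1$ never creates a double vacancy, because the new vacancy at $x$ sits between the facilitator at $x-1$ and the new particle at $x+1$. If $x+2$ is occupied, $D$ is unchanged: the filled vacancy was isolated, and the run through $x+1,x+2$ has a strictly more advanced front. If $x+2$ is empty, $D$ drops by one. The jump never alters sites ahead of the front, so a double vacancy is reached after finitely many steps.

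\textbf{Irreducibility.} As a self-contained proof, the gap is here. You announce this step and flag its difficulty but do not carry it out, and the mechanism you sketch does not work as stated. Inside $\mathcal{E}_{N,k}$ vacancies can neither cross nor become adjacent. So a vacancy at $y$ cannot advance when $y+2$ is vacant, and a single vacancy cannot in general be brought to a prescribed site. For example, take $N=5$, $k=3$, vacancies at $0$ and $2$: neither vacancy can reach site $1$ by itself, although two successive moves of different vacancies do it. The same objection applies to your first placement sentence.

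One way to close the gap is as follows.
\begin{itemize}
\item Describe $\eta\in\mathcal{E}_{N,k}$ by the position of one marked vacancy and the cyclic gap vector $(g_1,\dots,g_h)$, with $h=N-k$. Here $g_i\ge1$ counts the particles between the $i$-th and $(i+1)$-th vacancies, and $\sum_i g_i=k$.
\item The legal jumps from $\eta$ are exactly the one-step moves of a vacancy into an adjacent gap of length at least two. Moving vacancy $i$ forward requires $g_i\ge2$ and transfers one unit from $g_i$ to $g_{i-1}$; the backward move does the reverse.
\item Every such move stays in $\mathcal{E}_{N,k}$ and can be undone by a legal jump, because the site beyond a filled isolated vacancy is occupied.
\item Pushing the excess of $g_h,g_{h-1},\dots,g_2$ in turn into the preceding gap produces the gap vector $(k-h+1,1,\dots,1)$. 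This is a translate of your canonical configuration.
\item If $g_j\ge2$, move vacancies $j,j-1,\dots,j+1$ (indices mod $h$) forward once each. This carries one of the $2k-N\ge1$ excess units around the cycle, restores the gap vector, and translates the whole configuration by one site.
\end{itemize}
Hence every configuration reaches the canonical one, and reversibility then gives irreducibility.

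\textbf{Placement.} Placement of a vacancy or a particle at $z$ then follows from irreducibility, as you already argue for particles. You only need a configuration in $\mathcal{E}_{N,k}$ with the desired occupation at $z$, which exists since $k<N$ (respectively $k\ge1$).
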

For $\eta\in\Sigma_{N}$ and $\bm{y}\in\mathbb{T}^{d-1}_{N}$, define the periodic line $L_{\bm{y}}$ and its particle count by  
\[
L_{\bm{y}}=\left\{ (x,\bm{y}):x\in\mathbb{T}_{N}\right\} ,\qquad k_{\bm{y}}(\eta)=\sum_{x\in\mathbb{T}_{N}}\eta(x,\bm{y}).
\]

\begin{lem}
\label{lem:plb-transient}Suppose that $N$ is even and $\eta$ satisfies  
\begin{equation}
\frac{1}{2}<\frac{k_{\bm{y}}(\eta)}{N}<\frac{3}{4}\qquad\text{for every}\quad\bm{y}\in\mathbb{T}^{d-1}_{N}.\label{eq:plb-row-interval}
\end{equation}
Then $\eta\notin\Sigma^{{\rm rec}}_{N}$.
\end{lem}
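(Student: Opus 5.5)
The plan is to show that from $\eta$ one can reach a configuration in the closed set $\Lambda_N$ of Lemma \ref{lem:plb-closed-set}, while $\eta\notin\Lambda_N$. This suffices: if $\eta$ were recurrent, every state reachable from $\eta$ would communicate with $\eta$. But $\Lambda_N$ is closed, so nothing in it leads back to $\eta$, and hence $\eta\notin\Sigma^{\rm rec}_N$.

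That $\eta\notin\Lambda_N$ follows from counting. A configuration in $\Lambda_N$ has, on each line $L_{\bm y}$ with $y_2$ even, the $N/2$ sites with even $x_1$ vacant, so $k_{\bm y}\le N/2$. This contradicts \eqref{eq:plb-row-interval}. The transience of $\eta$ is therefore reduced to the reachability statement.

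To construct the path, I would work slice by slice. Fix $\bm w\in\mathbb T^{d-2}_N$ and consider the two-dimensional slice $\{(x_1,x_2,\bm w)\}$. In this slice I pair the rows $x_2=2j$ (donors) with the rows $x_2=2j+1$ (receivers). For a donor row, the goal is to empty its even sites by transferring its surplus particles vertically into the adjacent odd row, and also, where needed, into the row $2j-1$.

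The counts work out as follows. The donor row must end with at most $N/2$ particles, so it must export $k_{2j}-N/2<N/4$ particles. The receiver row can hold at most $N$ particles, so it has room for $N-k_{2j+1}>N/4$. The upper bound $3/4$ is exactly what guarantees that the receiver has enough room, and the lower bound $1/2$ is what lets Lemma \ref{lem:plb-one-dimensional} apply within each row.

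A single vertical transfer is carried out as follows. Inside the donor row, which has more than $N/2$ particles, I use Lemma \ref{lem:plb-one-dimensional} to place a particle at a chosen site $x$. Inside the receiver row, I use it to place a vacancy at $x$. A vertical jump from $(x,2j)$ to $(x,2j+1)$ then needs a facilitator at $(x,2j-1)$, which lies in the row above the donor. I would arrange this facilitator by preparing row $2j-1$ first with a particle at $x$, again via Lemma \ref{lem:plb-one-dimensional}, since that row also has density above $1/2$ at the start.

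I would repeat these transfers until the donor row has exactly $N/2$ particles. Its one-dimensional dynamics then freezes only at the alternating configuration, and I must steer that configuration into the parity that vacates the even sites. This is the delicate point, because Lemma \ref{lem:plb-one-dimensional} no longer applies once a row reaches density $1/2$. To handle it, I would do the last transfer with the donor row at $N/2+1$ particles, first arranging it, inside $\mathcal E_{N,N/2+1}$, as the odd sites together with one extra particle at a chosen even site, and then exporting that extra particle vertically. Some donor rows may also have fewer particles to spare than needed; in that case the remaining even occupied sites have to be fixed using the facilitator row.

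The main obstacle is the bookkeeping of the ordering. Once row $2j+1$ has received particles, it must still have density above $1/2$, which holds, and density below $1$ where needed to act as a facilitator row for the next pair. In addition, later transfers must not reoccupy $\mathbb H$-sites that have already been vacated. The closedness argument of Lemma \ref{lem:plb-closed-set}, applied locally, suggests that a finished donor row stays clean, because any jump into an even site of it needs a particle at another $\mathbb H$ site. I would process the pairs sequentially in $j$ and then over $\bm w$, checking this invariant at each step.
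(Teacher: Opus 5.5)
Your proposal is correct and is essentially the paper's proof: the same closed set $\Lambda_N$, the same counting for $\eta\notin\Lambda_N$, and the same slice-by-slice donor $2j$/receiver $2j+1$/facilitator $2j-1$ scheme driven by Lemma \ref{lem:plb-one-dimensional}. The paper's bookkeeping differs from yours. It arranges the donor once as all odd columns plus $k_{\rm d}-N/2$ even columns and then transfers $h=N-k_{\rm r}$ particles, so the donor is never rearranged near density $1/2$ and the full receiver serves automatically as the next facilitator. Your exact-surplus export, with the final step from the ``odd sites plus one even site'' arrangement at $N/2+1$ particles, works equally well, because row $2j-1$ keeps a count in $(N/2,N)$.

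Your contingency hedges (transfers into row $2j-1$, donors with too little surplus) never arise. Finished donor rows stay clean simply because no later move touches them. The local closedness heuristic you cite would not suffice on its own, since a jump into a finished row can be facilitated by the current donor.
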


\begin{proof}
We construct a finite legal path from $\eta$ into $\Lambda_{N}$. Initially $\eta\notin\Lambda_{N}$: a line with even second coordinate contains more than $N/2$ particles but has only $N/2$ sites with odd first coordinate, so some site of $\mathbb{H}$ must be occupied. Since $\Lambda_{N}$ is a closed subset, such a path will prove the transience of $\eta$.

First apply Lemma \ref{lem:plb-one-dimensional} on every line $L_{\bm{y}}$, using only jumps in directions $\pm\bm{e}_{1}$, so that its configuration belongs to $\mathcal{E}_{N,k_{\bm{y}}(\eta)}$. These moves preserve all line counts. Next process the slices  
\[
\Pi_{\bm{w}}=\left\{ (x,y,\bm{w}):x,y\in\mathbb{T}_{N}\right\} ,\qquad\bm{w}\in\mathbb{T}^{d-2}_{N},
\]
in any fixed order. Within each slice use only directions $\pm\bm{e}_{1},\pm\bm{e}_{2}$. Every jump and its facilitator then lie in that slice, so these are legal jumps of the full process and do not change any other slice. Additional legal jumps in other directions do not prevent us from prescribing this finite path.

Fix one slice $\Pi_{\bm{w}}$ and suppress $\bm{w}$ in its coordinates and line counts for the remainder of the slice construction. Call $x$ the column coordinate and $y$ the row coordinate. Process the row pairs $(2i,2i+1)$ in the order $i=0,1,\ldots,N/2-1$. In pair $i$, row $2i$ is the \emph{donor}, row $2i+1$ is the \emph{receiver}, and row $2i-1$, with indices taken modulo $N$, supplies facilitation.

At the start of each stage, every previously processed odd row is full; every previously processed even row has particles only at odd columns and will not be changed again; and every unprocessed row has its original count and no adjacent vacancies. These properties hold before the first stage. The only horizontal rearrangements outside the current pair will occur in row $N-1$ during stage $0$; they preserve its count and its no-adjacent-vacancies property.

At a fixed stage let $k_{{\rm d}}=k_{2i}(\eta)$ and $k_{{\rm r}}=k_{2i+1}(\eta)$ denote the original donor and receiver counts in this slice, and set $h=N-k_{{\rm r}}$. The assumptions imply  
\begin{equation}
k_{{\rm d}}-\frac{N}{2}\le h<k_{{\rm d}},\qquad h\ge1.\label{eq:plb-transfer-count}
\end{equation}
The first inequality is equivalent to $k_{{\rm d}}+k_{{\rm r}}\le3N/2$; the second follows from $k_{{\rm d}}+k_{{\rm r}}>N$; and $h\ge1$ follows from $k_{{\rm r}}<N$.

Arrange the donor horizontally so that every odd-column site and exactly $b:=k_{{\rm d}}-N/2$ even-column sites are occupied. This target has the required count and no adjacent vacancies, hence is accessible by Lemma \ref{lem:plb-one-dimensional}. Choose a set $Q$ of $h$ occupied donor sites containing all $b$ occupied even-column sites. This is possible by \eqref{eq:plb-transfer-count}. Fix an ordering of $Q$, and perform no further horizontal jumps in the donor during this stage.

Suppose that $j<h$ transfers have been made, and let $x$ be the column of the next selected donor site. Its particle is still present, since the selected sites are distinct and only previously selected donor sites have been changed. The receiver has $k_{{\rm r}}+j$ particles, with $N/2<k_{{\rm r}}+j<N$, and no adjacent vacancies. The latter property is preserved both by the horizontal rearrangements and by filling a vacancy. Lemma \ref{lem:plb-one-dimensional} therefore lets us arrange the receiver so that $(x,2i+1)$ is vacant.

If $i\ge1$, row $2i-1$ is already full, so $(x,2i-1)$ is occupied. For $i=0$, row $N-1$ still has its original count, strictly between $N/2$ and $N$, and has no adjacent vacancies. Rearrange it horizontally to put a particle at $(x,N-1)$, using Lemma \ref{lem:plb-one-dimensional}. Since $N\ge4$, the facilitator row is distinct from both donor and receiver, so this rearrangement preserves the two occupations just specified. In either case the jump  
\[
(x,2i)\to(x,2i+1)
\]
is legal, facilitated by the particle at $(x,2i-1)$. It completes the next transfer and preserves the conditions needed for the following transfer.

After $h$ transfers the receiver is full. Every even-column donor site that was occupied immediately after the donor rearrangement belongs to $Q$ and has now been emptied. Hence all remaining donor particles lie at odd columns. No other row count has changed. Rearrangements in row $N-1$ during stage $0$ preserve its count and its no-adjacent-vacancies property, so that row remains an admissible receiver at the final stage. The stated stage properties therefore hold inductively.

At the end of this slice construction every odd row is full and every even row has particles only at odd columns. In particular, all sites of $\mathbb{H}\cap\Pi_{\bm{w}}$ are vacant. Perform the same finite construction on every remaining slice. The concatenated path uses finitely many legal jumps, never changes an already processed slice, and ends in $\Lambda_{N}$. Closure of $\Lambda_{N}$ under the full process prevents return to the initial state and proves transience.
\end{proof}

\subsection{Graphical Construction and Localization}
For $a,b\in\mathbb{T}_{N}$, let $d_{N}(a,b)=\min_{k\in\mathbb{Z}}|a-b+kN|$, using any integer representatives. For $\bm{x},\bm{z}\in\mathbb{T}^{d}_{N}$, set  
\[
d_{N}(\bm{x},\bm{z})=\max_{j\in\llbracket1,d\rrbracket}d_{N}(x_{j},z_{j}),\qquad B_{r}(\bm{z})=\{\bm{x}:d_{N}(\bm{x},\bm{z})\le r\}.
\]
We use integer radii $r\ge1$ with $4r<N$.

For each $(\bm{x},\bm{v})\in\mathbb{T}^{d}_{N}\times\mathcal{V}$, attach an independent rate-$1$ Poisson clock. At a ring, apply the jump $\bm{x}\to\bm{x}+\bm{v}$ if it is admissible, and otherwise leave the configuration unchanged. This update reads the three sites  
\[
S(\bm{x},\bm{v})=\{\bm{x}-\bm{v},\bm{x},\bm{x}+\bm{v}\}
\]
and can change only the two sites $E(\bm{x},\bm{v})=\{\bm{x},\bm{x}+\bm{v}\}$. There are $2dN^{d}$ clocks and almost surely finitely many rings on every bounded time interval, so these updates construct the process.

For a center $\bm{z}$ and radius $r$, construct $\eta^{\bm{z},r}_{t}$ on the $d$-dimensional box $B_{r}(\bm{z})$ from the same initial occupations in that box. Use the same clocks, but retain a clock only if its entire support $S(\bm{x},\bm{v})$ is contained in $B_{r}(\bm{z})$, and suppress all other clocks. Consequently, $\eta^{\bm{z},r}_{t}(\bm{z})$ depends only on initial occupations in the box and on clocks whose supports are contained there.

\begin{lem}
\label{lem:plb-poisson-tuples}Let $\{\Pi_{a}:a\in\mathcal{L}\}$ be a finite family of independent rate-$1$ Poisson clocks. Fix $t\ge0$, an integer $\ell\ge1$, and a sequence of labels $a_{1},\ldots,a_{\ell}\in\mathcal{L}$, with repetitions allowed. The expected number of tuples $(s_{1},\ldots,s_{\ell})$ such that $t>s_{1}>\cdots>s_{\ell}>0$ and $s_{i}$ is a ring of $\Pi_{a_{i}}$ for each $i$ is $t^{\ell}/\ell!$.
\end{lem}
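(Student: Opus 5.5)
The expected count of ordered tuples is the integral of the product of the clock intensities over the time simplex. Since each clock has rate $1$, this integral is the volume of that simplex, namely $t^{\ell}/\ell!$. I will make this precise with the multivariate Mecke (Campbell) formula, after grouping the tuple positions by label so that repeated labels are handled correctly.

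\emph{Distinct labels.} If $a_1,\dots,a_\ell$ are pairwise distinct, the processes $\Pi_{a_1},\dots,\Pi_{a_\ell}$ are independent. The count equals
\[
\int\cdots\int \mathbf 1\{t>s_1>\cdots>s_\ell>0\}\,\Pi_{a_1}(ds_1)\cdots\Pi_{a_\ell}(ds_\ell).
\]
By independence and Fubini, its expectation is the same integral taken against Lebesgue measure $ds_1\cdots ds_\ell$. That integral is the volume of the ordered simplex in $[0,t]^\ell$, which is $t^\ell/\ell!$.

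\emph{Repeated labels.} Group the indices by label. For a fixed label $a$, let $i_1<\dots<i_m$ be the positions where $a_{i}=a$. The strict ordering $s_{i_1}>\dots>s_{i_m}$ forces these to be $m$ \emph{distinct} points of $\Pi_a$. So the count over this group is a sum over ordered $m$-tuples of distinct atoms of $\Pi_a$, restricted to a set.

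For a rate-$1$ Poisson process, the factorial moment measure of order $m$, i.e.\ the intensity of ordered $m$-tuples of distinct points, is Lebesgue measure on $\mathbb R_+^m$. This is the multivariate Mecke formula. It can also be checked directly: for disjoint intervals, independent Poisson counts give products of lengths, and within one interval $\mathbb E[N(N-1)\cdots(N-m+1)]=|I|^m$.

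Independence across distinct labels then makes the joint factorial moment measure of the whole family equal to Lebesgue measure on $\mathbb R_+^\ell$. Integrating the indicator of $\{t>s_1>\cdots>s_\ell>0\}$ gives $t^\ell/\ell!$.

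The only point needing care is the repeated-label case. There one must note that strict inequalities exclude coincident atoms. This is exactly why the factorial moment measure, rather than the ordinary moment measure (which carries diagonal mass), is the right object.
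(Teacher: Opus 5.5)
Your argument is correct, but it is organized differently from the paper's. The paper proves the identity by hand. It conditions on the ring counts $C_a$ in $(0,t)$ and represents the ring times as independent uniforms on $(0,t)$. It then counts $\prod_a (C_a)_{m_a}$ ways of assigning distinct rings to the occurrences of each label, and notes that each assignment falls in the prescribed strict order with probability $1/\ell!$. It finishes with $\mathbb{E}[(C_a)_m]=t^m$ and independence across labels.

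You instead cite the multivariate Mecke formula (the factorial moment measures of a rate-$1$ Poisson process are Lebesgue measure), take the product over labels by independence, and integrate the simplex indicator. Both arguments rest on the same Poisson falling-factorial identity: your check $\mathbb{E}[N(N-1)\cdots(N-m+1)]=|I|^m$ is exactly the paper's last step. The paper's conditioning argument is in effect a self-contained proof of the Mecke identity for this one integrand. It needs only elementary facts, plus a separate remark for $t=0$.

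Your route is shorter and more general. It gives $\mathbb{E}\sum f=\int f\,ds$ for any nonnegative measurable $f$ and covers $t=0$ automatically. However, it relies on a cited theorem, or on the $\pi$--$\lambda$ extension that you only sketch.

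A minor point: the strict-simplex indicator vanishes on all diagonals, so integrating against the ordinary moment measure would give the same value. The factorial moment measure is therefore convenient rather than essential; this does not affect the correctness of your proof.
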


\begin{proof}
The assertion is immediate when $t=0$, so assume $t>0$. For each label $a$ occurring in the sequence, let $m_{a}$ be its multiplicity and let $C_{a}$ be its number of rings in $(0,t)$. The variables $C_{a}$ are independent Poisson variables of mean $t$. Conditional on $C_{a}=n$, its $n$ ring locations, without their time ordering, can be represented by independent uniform variables on $(0,t)$. These conditional representations are independent for distinct labels.

Write $(n)_{m}=n(n-1)\cdots(n-m+1)$ for $n\ge m$, and $(n)_{m}=0$ for $n<m$. Conditional on all the counts, there are $\prod_{a}(C_{a})_{m_{a}}$ ways to assign distinct rings of each label to its occurrences in the sequence. For every fixed assignment, the $\ell$ assigned times are independent uniform variables, so the probability that they occur in the specified strict order is $1/\ell!$. Hence the conditional expected number of tuples is $(\ell!)^{-1}\prod_{a}(C_{a})_{m_{a}}$. For each $m\ge1$,  
\[
\mathbb{E}[(C_{a})_{m}]=e^{-t}\sum^{\infty}_{n=m}\frac{n!}{(n-m)!}\frac{t^{n}}{n!}=t^{m}e^{-t}\sum^{\infty}_{j=0}\frac{t^{j}}{j!}=t^{m}.
\]
Taking expectations, using independence of the counts and $\sum_{a}m_{a}=\ell$, proves the claim.
\end{proof}

\begin{lem}
\label{lem:plb-localization}
Fix an initial configuration $\eta$, and let $\mathbb P_\eta^N$ also denote the law of the graphical coupling above. For every $t \ge 0$, $\bm z \in \mathbb T_N^d$, and integer $r \ge 1$ with $4r < N$, 
\begin{equation}
\mathbb{P}_\eta^N \left(\eta^{N}_{t}(\bm{z})\ne\eta^{\bm{z},r}_{t}(\bm{z})\right)\le\exp\left((12de)t-\frac{r}{2}\right).\label{eq:plb-localization}
\end{equation}
The bound is uniform over the initial configuration and therefore also holds for any random initial configuration independent of the clocks. 
\end{lem}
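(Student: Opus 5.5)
We will use the standard backward disagreement-path (light-cone) argument in the graphical construction. Couple $\eta^N_t$ and $\eta^{\bm z,r}_t$ through the same clocks, with identical initial occupations on $B_r(\bm z)$. If $\eta^N_t(\bm z)\ne\eta^{\bm z,r}_t(\bm z)$, trace the disagreement backwards in time. Site $\bm z$ was last written at or before time $t$ by some clock $(\bm x,\bm v)$ with $\bm z\in E(\bm x,\bm v)$. If no such ring occurred, both processes still hold the common initial value at $\bm z$, a contradiction.

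At that ring there are two possibilities. Either the clock is suppressed in the box process, which forces $S(\bm x,\bm v)\not\subset B_r(\bm z)$. Or the two processes applied the same update rule to inputs that differed at some site of $S(\bm x,\bm v)$ just before the ring. In the second case we recurse to that earlier site and time. Each step moves the tracked site by at most $2$ in $d_N$, since $S(\bm x,\bm v)$ has diameter $2$ and contains the written site. Every step uses a strictly earlier ring, so the recursion terminates. It cannot end at time $0$ inside the box, because the initial data agree there. It must therefore end at a suppressed clock whose support leaves $B_r(\bm z)$.

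Reaching $B_r(\bm z)^c$ from $\bm z$ in steps of size at most $2$ requires at least $\ell\ge r/2$ steps. (We must check the torus geometry here: $4r<N$ ensures the box does not wrap, so the $d_N$-distance bound is honest.) Thus the event implies the existence of a chain of clocks $(a_1,\dots,a_\ell)$ with rings $t>s_1>\dots>s_\ell>0$. Each $a_{i+1}$ must have $E(a_{i+1})$ meeting $S(a_i)$, which reads: the site read at step $i$ was written by $a_{i+1}$.

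Next we count labels. Given $a_i$, the set $S(a_i)$ has three sites. Each site $\bm y$ lies in $E(\bm x,\bm v)$ for at most $2\cdot 2d=4d$ clocks (either $\bm x=\bm y$, or $\bm x+\bm v=\bm y$). This gives at most $12d$ choices per step, and the starting clock $a_1$ has at most $4d\le 12d$ choices. Applying Lemma \ref{lem:plb-poisson-tuples} to each label sequence and a union bound over sequences, the probability is at most
\[
\sum_{\ell\ge \lceil r/2\rceil}\frac{(12dt)^\ell}{\ell!}.
\]

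Finally we bound this sum. Write $\ell!\ge(\ell/e)^\ell$ and use the Chernoff-type inequality $\sum_{\ell\ge m}x^\ell/\ell!\le e^{\lambda x}\lambda^{-m}$ with $\lambda=e$. This gives $e^{12dte}\,e^{-r/2}$, which is exactly \eqref{eq:plb-localization}. The bound never used $\eta$, so it is uniform over initial configurations and extends to random initial data independent of the clocks by conditioning.

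The main obstacle is making the backward path rigorous. It must be truly monotone in time, and the disagreement must propagate only through sites in $S$ of the last writing clock. The subtle case is the starting clock $a_1$, whose support may already touch the boundary. We will handle this by letting the chain end at the first suppressed clock and counting step displacements carefully, so that $\ell\ge r/2$ holds for the $d_N$ metric.
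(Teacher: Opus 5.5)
Your proposal is correct and follows essentially the same route as the paper. It traces the discrepancy backward to a suppressed clock, which forces more than $r/2$ steps of $d_N$-length at most $2$. You count $(12d)^{\ell}$ linked label chains through the three sites of $S(a_i)$, where the paper counts label–site pairs, with the same bound. You then apply Lemma \ref{lem:plb-poisson-tuples} and the tail bound $\sum_{\ell\ge m}x^{\ell}/\ell!\le e^{ex-m}$. One cosmetic point: the hypothesis $4r<N$ plays no role in the step count, because the triangle inequality for $d_N$ already gives $r<d_N(\bm z_\ell,\bm z)\le 2\ell$ directly.
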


\begin{proof}
Clearly, we may assume that $t > 0$.
We work on the probability-one event that the clock rings in $[0,t]$ are finite in number, have distinct times, and do not occur at $0$ or $t$. Suppose that the two processes disagree at $(\bm{z},t)$. At a current site $\bm{u}$ and current time, inspect the latest earlier global clock ring whose endpoint set contains $\bm{u}$. There must be such a ring: otherwise both values would still equal their common initial value.

If its support leaves $B_{r}(\bm{z})$, append any site of that support outside the box and terminate the trace. If the support is contained in the box, both processes apply the same deterministic update map, including the admissibility test. Different output values at $\bm{u}$ imply that at least one of the three input values was different immediately before the ring. Choose such a site and continue the trace from the left limit at that earlier ring time. In particular, the next ring inspected is strictly earlier.

The trace must terminate at a suppressed clock: it cannot use infinitely many rings, and it cannot reach a discrepancy at time zero inside the box. Thus there are sites $\bm{z}=\bm{z}_{0},\bm{z}_{1},\ldots,\bm{z}_{\ell}$ and times $t>s_{1}>\cdots>s_{\ell}>0$ such that $\bm{z}_{\ell}\notin B_{r}(\bm{z})$ and the clock at step $i$ has $\bm{z}_{i-1}$ in its endpoint set and $\bm{z}_{i}$ in its support. Each spatial step has $d_{N}$-length at most $2$, including the last step. Consequently,  
\[
r<d_{N}(\bm{z}_{\ell},\bm{z})\le2\ell,\qquad\ell\ge n_{r}:=\lfloor r/2\rfloor+1.
\]

For a fixed current site, precisely $2d$ clock labels have that site as their source and $2d$ have it as their target. Each of these $4d$ labels permits at most three choices of the next site. Hence the number of possible sequences of labels and sites of length $\ell$, starting at $\bm{z}$, is at most $(12d)^{\ell}$. For a fixed such sequence, Lemma \ref{lem:plb-poisson-tuples} gives expected number $t^{\ell}/\ell!$ of compatible decreasing tuples of ring times, including when a label is repeated. The probability that a tuple exists is at most this expectation. Counting all tuples, without imposing the further conditions of the actual discrepancy trace, only enlarges the event. Therefore a union bound gives  
\begin{align*}
\mathbb{P}\left(\eta^{N}_{t}(\bm{z})\ne\eta^{\bm{z},r}_{t}(\bm{z})\right)\le\sum_{\ell\ge n_{r}}\frac{(12dt)^{\ell}}{\ell!} & \le e^{-n_{r}}\sum_{\ell\ge n_{r}}\frac{((12de)t)^{\ell}}{\ell!}\\
 & \le\exp((12de)t-n_{r})\le\exp\left((12de)t-\frac{r}{2}\right),
\end{align*}
as desired.
\end{proof}

\subsection{Concentration of the Line Densities}

\begin{lem}
\label{lem:plb-row-concentration}Let $\rho\in(0,1)$, $t\ge0$, and let $r\ge1$ be an integer with $4r<N$. Set  
\[
Q=4r+1,\qquad\delta=\exp\left((12de)t-\frac{r}{2}\right).
\]
For every $\gamma>0$ with $\delta\le\gamma/2$ and every line $L_{\bm{y}}$, $\bm{y}\in\mathbb{T}^{d-1}_{N}$,  
\begin{equation}
\mathbb{P}^{N}_{\nu_{\rho}}\left(\left|\frac{k_{\bm{y}}(\eta^{N}_{t})}{N}-\rho\right|\ge\gamma\right)\le N\delta+2\exp\left(-\frac{\gamma^{2}N}{2Q}\right).\label{eq:plb-row-concentration}
\end{equation}
\end{lem}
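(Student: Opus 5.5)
We compare the true line count with the sum of the localized values. Set
\[
\widetilde k_{\bm y}:=\sum_{x\in\mathbb T_N}\eta^{(x,\bm y),r}_t\bigl((x,\bm y)\bigr).
\]
On the event that $\eta^N_t(x,\bm y)=\eta^{(x,\bm y),r}_t(x,\bm y)$ for every $x\in\mathbb T_N$, we have $k_{\bm y}(\eta^N_t)=\widetilde k_{\bm y}$. Lemma \ref{lem:plb-localization}, which applies because the Bernoulli initial data are independent of the clocks, together with a union bound over the $N$ sites of $L_{\bm y}$, shows that this event fails with probability at most $N\delta$. It therefore suffices to prove
\[
\mathbb P\bigl(|\widetilde k_{\bm y}/N-\rho|\ge\gamma\bigr)\le 2\exp\bigl(-\gamma^2N/(2Q)\bigr).
\]

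\textbf{Mean.} By translation invariance of $\nu_\rho$ and of the clock family, $\mathbb E[\widetilde k_{\bm y}]=\sum_x\mathbb E[\eta^N_t(x,\bm y)]+\mathrm{err}$, where $|\mathrm{err}|\le N\delta$ by Lemma \ref{lem:plb-localization} again. The FEP conserves particles and $\nu_\rho$ is translation invariant, so $\sum_{\bm x}\mathbb E[\eta^N_t(\bm x)]=\rho N^d$. Translation invariance of the coupled dynamics then gives $\mathbb E[\eta^N_t(\bm x)]=\rho$ for every $\bm x$. Hence $|\mathbb E\widetilde k_{\bm y}/N-\rho|\le\delta\le\gamma/2$, and it suffices to bound the probability that $\widetilde k_{\bm y}$ deviates from its mean by at least $\gamma N/2$.

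\textbf{Deviation.} The variable $X_x:=\eta^{(x,\bm y),r}_t(x,\bm y)\in\{0,1\}$ depends only on the initial occupations and clocks inside $B_r((x,\bm y))$. Since the radius is $r$, two boxes along the line are disjoint whenever $d_N(x,x')>2r$. Split $\mathbb T_N$ into $Q=4r+1$ residue classes modulo $Q$, with a small adjustment for wraparound when $Q\nmid N$, e.g. by grouping the leftover sites into one more class. Within each class the variables are independent, since distinct members are at distance at least $Q>2r$ both directly and across the periodic boundary. This is where $4r<N$ is used, and it is the one point to check carefully. Each class $c$ has about $N/Q$ terms. Hoeffding's inequality bounds the probability that class $c$ deviates by $\gamma n_c/2$ by $2\exp(-\gamma^2n_c/2)$. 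A deviation of the total by $\gamma N/2$ forces some class to deviate proportionally. Taking a union over classes loses a factor $Q$.

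\textbf{Main obstacle.} That factor $Q$ is the main obstacle, since the stated bound has constant $2$ in front of the exponential. To avoid it, I would instead apply Hoeffding's exponential moment bound to each class sum and combine the classes with Hölder's inequality over the $Q$ classes, $\mathbb E e^{\lambda\sum_c S_c}\le\prod_c(\mathbb E e^{Q\lambda S_c})^{1/Q}$. This gives $\mathbb E e^{\lambda(\widetilde k-\mathbb E\widetilde k)}\le\exp(Q\lambda^2N/8)$. The Chernoff bound then yields
\[
\mathbb P\bigl(|\widetilde k-\mathbb E\widetilde k|\ge\gamma N/2\bigr)\le2\exp\bigl(-\gamma^2N/(2Q)\bigr),
\]
which matches the constant in the statement. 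Combining this with the $N\delta$ localization error completes the proof.
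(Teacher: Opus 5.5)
Your route is the paper's route. You work on the event that all $N$ localized values on the line agree with the true ones (cost $N\delta$), and you use conservation plus translation invariance to get $\mathbb{E}[\eta^N_t(\bm x)]=\rho$, hence a mean error of at most $\delta\le\gamma/2$. You then split the line into classes whose boxes are pairwise disjoint and combine Hoeffding's lemma inside each class with H\"older's inequality (all exponents equal to $Q$) and a two-sided Chernoff bound. You also correctly reject the union bound over classes, and your constant computation is right provided there are at most $Q$ classes.

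The one step that fails as written is the wraparound adjustment when $Q\nmid N$. Write $N=qQ+s$ with $0<s<Q$.
\begin{itemize}
\item Plain residue classes mod $Q$ break only through the pairs $c,\ c+qQ$ with $c<s$, which are at torus distance $s$. This is a real conflict only if $s\le 2r$.
\item Removing the offending sites leaves exactly the $s$ consecutive sites $qQ,\dots,N-1$ as leftovers.
\item Grouping these into one extra class fails for $s\ge 2$: neighbouring leftover sites are at distance $1\le 2r$, so their boxes overlap and independence within the class is lost.
\item Even setting independence aside, this gives $Q+1$ classes. Your H\"older step would then yield only $2\exp\bigl(-\gamma^2N/(2(Q+1))\bigr)$, not the stated bound.
\end{itemize}

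The fix, which is what the paper does, is a greedy colouring. Take the graph on $\mathbb{T}_N$ that joins $x\neq x'$ whenever $d_N(x,x')\le 2r$. Every vertex has at most the $4r$ neighbours $x\pm1,\dots,x\pm2r$, so greedy colouring uses at most $4r+1=Q$ colours. Each colour class is then $2r$-separated, so the corresponding boxes are disjoint and the localized variables in that class are independent. With this replacement, your H\"older and Chernoff computation gives exactly $2\exp\bigl(-\gamma^2N/(2Q)\bigr)$, and the proof is complete.
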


\begin{proof}
Translating the initial configuration and every clock label by a fixed torus vector translates the graphical trajectory by that vector. The product initial law and the joint clock law are invariant under this relabeling, so the time-$t$ law is translation invariant. By conservation of the total particle number and translation invariance,  
\begin{equation}
\mathbb{E}^{N}_{\nu_{\rho}}\left[\eta^{N}_{t}(\bm{x})\right]=\frac{1}{N^{d}}\mathbb{E}^{N}_{\nu_{\rho}}\left[|\eta^{N}_{t}|\right]=\rho\qquad\text{for every}\quad\bm{x}\in\mathbb{T}^{d}_{N}.\label{eq:plb-marginal}
\end{equation}
This identity does not require the time-$t$ law to be a product measure.

Fix the line $L_{\bm{y}}$. For $x\in\mathbb{T}_{N}$, put $\bm{z}_{x}=(x,\bm{y})$ and define  
\[
Z_{x}=\eta^{N}_{t}(\bm{z}_{x}),\qquad\widehat{Z}_{x}=\eta^{\bm{z}_{x},r}_{t}(\bm{z}_{x}),\qquad\mu_{x}=\mathbb{E}^{N}_{\nu_{\rho}}\left[\widehat{Z}_{x}\right].
\]
Every local occupation is $\{0,1\}$-valued at every time, so $\widehat{Z}_{x}$ has the Bernoulli distribution with parameter $\mu_{x}=\mathbb{P}^{N}_{\nu_{\rho}}(\widehat{Z}_{x}=1)$. Its parameter need not equal $\rho$. Lemma \ref{lem:plb-localization} and \eqref{eq:plb-marginal} imply  
\begin{equation}
\mathbb{P}^{N}_{\nu_{\rho}}\left(Z_{x}\ne\widehat{Z}_{x}\right)\le\delta,\qquad|\mu_{x}-\rho|\le\mathbb{E}^{N}_{\nu_{\rho}}\left|\widehat{Z}_{x}-Z_{x}\right|=\mathbb{P}^{N}_{\nu_{\rho}}\left(\widehat{Z}_{x}\ne Z_{x}\right)\le\delta.\label{eq:plb-local-means}
\end{equation}

Join distinct vertices $x,x'\in\mathbb{T}_{N}$ when $d_{N}(x,x')\le2r$. Since $4r<N$, each vertex has the $4r$ distinct neighbors $x\pm1,\ldots,x\pm2r$. Greedy coloring gives $Q=4r+1$ color classes $I_{1},\ldots,I_{Q}$, some possibly empty, such that $d_{N}(x,x')>2r$ for distinct $x,x'$ in the same class. If $B_{r}(\bm{z}_{x})$ and $B_{r}(\bm{z}_{x'})$ intersected, the first coordinate of an intersection point and the triangle inequality would give $d_{N}(x,x')\le2r$. Thus the $d$-dimensional boxes in each color class are pairwise disjoint. The number of colors remains $4r+1$: only centers on a single coordinate line are being colored, regardless of the box dimension.

For $x\in I_{j}$, $\widehat{Z}_{x}$ uses only initial occupations in its box and clocks whose entire supports are in that box. The initial-site collections for distinct boxes in the class are disjoint, as are their clock-label collections. Product initial data, independent clocks, and independence of the initial configuration from the clocks therefore give joint independence of $\{\widehat{Z}_{x}:x\in I_{j}\}$. No independence between different classes is used.

For any Bernoulli variable $Y$, one has 
\begin{equation}
\log\mathbb{E}\exp\{\theta(Y-\mathbb{E}Y)\}\le\frac{\theta^{2}}{8}\qquad\text{for every}\quad\theta\in\mathbb{R}.\label{eq:plb-bernoulli-mgf}
\end{equation}
Indeed, if $p=\mathbb{E}Y$, the left side is $h(\theta)=\log(1-p+pe^{\theta})-p\theta$. It satisfies $h(0)=h'(0)=0$ and  
\[
h''(\theta)=p_{\theta}(1-p_{\theta})\le\frac{1}{4},\qquad p_{\theta}=\frac{pe^{\theta}}{1-p+pe^{\theta}}.
\]
The integral Taylor formula $h(\theta)=\theta^{2}\int^{1}_{0}(1-s)h''(s\theta)\,{\rm d}s$ proves \eqref{eq:plb-bernoulli-mgf} for both signs of $\theta$, including the degenerate cases $p=0,1$.

Set  
\[
S_{j}=\sum_{x\in I_{j}}(\widehat{Z}_{x}-\mu_{x}),\qquad S=\sum^{Q}_{j=1}S_{j}.
\]
For every real $\theta$, generalized Hölder's inequality with all exponents equal to $Q$, followed by the independence within each color class and \eqref{eq:plb-bernoulli-mgf}, gives  
\begin{align*}
\mathbb{E}^{N}_{\nu_{\rho}}[e^{\theta S}]\le\prod^{Q}_{j=1}\left(\mathbb{E}^{N}_{\nu_{\rho}}[e^{Q\theta S_{j}}]\right)^{1/Q} & =\prod^{Q}_{j=1}\left[\prod_{x\in I_{j}}\mathbb{E}^{N}_{\nu_{\rho}}e^{Q\theta(\widehat{Z}_{x}-\mu_{x})}\right]^{1/Q}\\
 & \le\prod^{Q}_{j=1}\exp\left(\frac{Q\theta^{2}|I_{j}|}{8}\right)=\exp\left(\frac{QN\theta^{2}}{8}\right).
\end{align*}
For $a>0$ and $\theta>0$, exponential Markov inequality yields  
\[
\mathbb{P}^{N}_{\nu_{\rho}}\left(S\ge aN\right)\le\exp\left(-\theta aN+\frac{QN\theta^{2}}{8}\right).
\]
The choice $\theta=4a/Q$ makes this $\exp(-2a^{2}N/Q)$. The same moment bound at the negative parameter $-\theta$ gives  
\[
\mathbb{P}^{N}_{\nu_{\rho}}\left(S\le-aN\right)\le e^{-\theta aN}\mathbb{E}^{N}_{\nu_{\rho}}e^{-\theta S}\le\exp\left(-\theta aN+\frac{QN\theta^{2}}{8}\right).
\]
Using the same positive $\theta=4a/Q$ and adding the two tails gives 
\begin{equation}
\mathbb{P}^{N}_{\nu_{\rho}}\left(|S|\ge aN\right)\le2\exp\left(-\frac{2a^{2}N}{Q}\right).\label{eq:plb-colored-tail}
\end{equation}

Let $G=\{Z_{x}=\widehat{Z}_{x}\quad\text{for every}\enspace x\}$. A union bound gives $\mathbb{P}^{N}_{\nu_{\rho}}(G^{c})\le N\delta$. On $G$,  
\[
\left|\frac{k_{\bm{y}}(\eta^{N}_{t})}{N}-\rho\right|\le\frac{|S|}{N}+\left|\frac{1}{N}\sum_{x}(\mu_{x}-\rho)\right|\le\frac{|S|}{N}+\delta.
\]
Since $\delta\le\gamma/2$, a deviation by at least $\gamma$ on $G$ implies $|S|\ge\gamma N/2$. Apply \eqref{eq:plb-colored-tail} with $a=\gamma/2$ and add $\mathbb{P}^{N}_{\nu_{\rho}}(G^{c})$ to obtain \eqref{eq:plb-row-concentration}.
\end{proof}

\subsection{Proof of Theorem \ref{thm:plb-main}}

\begin{proof}
[Proof of Theorem \ref{thm:plb-main}] Fix $d\ge2$, even $N$, and $\rho\in(1/2,3/4)$, and write $\gamma=\gamma_{\rho}$ and $c=c_{\rho,d}$ as in \eqref{eq:plb-constants}. For every deterministic $t\ge0$, almost surely,  
\begin{equation}
\left\{ \tau^{N}_{{\rm tr}}\le t\right\} =\left\{ \eta^{N}_{t}\in\Sigma^{{\rm rec}}_{N}\right\} .\label{eq:plb-hitting-identity}
\end{equation}
If $|k_{\bm{y}}(\eta^{N}_{t})/N-\rho|<\gamma$ on every line, all these line densities lie strictly between $1/2$ and $3/4$. Lemma \ref{lem:plb-transient} therefore shows that $\eta^{N}_{t}$ is transient. Consequently,  
\begin{equation}
\left\{ \tau^{N}_{{\rm tr}}\le t\right\} \subseteq\bigcup_{\bm{y}\in\mathbb{T}^{d-1}_{N}}\left\{ \left|\frac{k_{\bm{y}}(\eta^{N}_{t})}{N}-\rho\right|\ge\gamma\right\} .\label{eq:plb-transience-reduction}
\end{equation}
There are exactly $N^{d-1}$ lines in this union. For any admissible radius $r$ with $\delta=\exp((12de)t-r/2)\le\gamma/2$, Lemma \ref{lem:plb-row-concentration} thus gives  
\[
\mathbb{P}^{N}_{\nu_{\rho}}\left(\tau^{N}_{{\rm tr}}\le t\right)\le N^{d}\delta+2N^{d-1}\exp\left(-\frac{\gamma^{2}N}{2(4r+1)}\right).
\]
Choose 
\begin{equation}
r_{N}(t)=\left\lceil (24de)t+2(d+4)\log N\right\rceil .\label{eq:plb-radius}
\end{equation}
Whenever $4r_{N}(t)<N$, its localization error satisfies  
\[
\delta_{N}(t):=\exp\left((12de)t-\frac{r_{N}(t)}{2}\right)\le N^{-(d+4)}.
\]
For $N$ large enough that $N^{-(d+4)}\le\gamma/2$, this yields  
\begin{equation}
\mathbb{P}^{N}_{\nu_{\rho}}\left(\tau^{N}_{{\rm tr}}\le t\right)\le N^{-4}+2N^{d-1}\exp\left(-\frac{\gamma^{2}N}{2(4r_{N}(t)+1)}\right).\label{eq:plb-quantitative}
\end{equation}
Set $t=t_{N}=cN/\log N$. Since $d$ and $c>0$ are fixed, $r_{N}(t_{N})/N\to0$, so $4r_{N}(t_{N})<N$ for all sufficiently large $N$. Moreover,  
\[
4r_{N}(t_{N})+1\le(96de)c\frac{N}{\log N}+8(d+4)\log N+5\le(192de)c\frac{N}{\log N}
\]
eventually, because $(\log N)^{2}/N\to0$. The definition of $c$ gives  
\[
\frac{\gamma^{2}N}{2(4r_{N}(t_{N})+1)}\ge\frac{\gamma^{2}}{(384de)c}\log N=(d+2)\log N.
\]
Thus the second term in \eqref{eq:plb-quantitative} is at most $2N^{d-1}N^{-(d+2)}=2N^{-3}$, proving \eqref{eq:plb-main-bound}.
\end{proof}

\begin{acknowledgement}
The authors declare that generative AI (GPT-6 Astra) tools were used solely for language polishing and editorial revision of the manuscript.
S. Kim has been supported by the Basic Science Research Program through the National Research Foundation of Korea funded by the Ministry of Science and ICT (RS-2025-00518980, RS-2026-25518141), the Yonsei University Research Fund of 2026 (2026-22-0181), and the POSCO Science Fellowship of POSCO TJ Park Foundation. S. Lee and I. Seo have been supported by National Research Foundation of Korea funded by the Ministry of Science and ICT (RS-2025-23525546, RS-2023-NR076621, and RS-2026-25518141).
\end{acknowledgement}


\begin{thebibliography}{99}
\bibitem{BG90}C. Bezuidenhout and G. Grimmett. \emph{The Critical Contact Process Dies Out}. Ann. Probab., 18:1462--1482, 1990.

\bibitem{BEKL26}O. Blondel, C. Erignoux, S. Kim and S. Lee. \emph{Sharp freezing time estimates for the subcritical Facilitated Exclusion Process}. arXiv:2606.15233, 2026.

\bibitem{BESS20}O. Blondel, C. Erignoux, M. Sasada and M. Simon. \emph{Hydrodynamic limit for a facilitated exclusion process}. Ann. Inst. Henri Poincar\'e Probab. Stat., 56:667--714, 2020.

\bibitem{BES21}O. Blondel, C. Erignoux and M. Simon. \emph{Stefan problem for a nonergodic facilitated exclusion process}. Probab. Math. Phys., 2:127--178, 2021.

\bibitem{Bol06}B. Bollob\'as. \emph{The Art of Mathematics: Coffee Time in Memphis}. Cambridge University Press, 2006.

\bibitem{DCES26}H. Da Cunha, C. Erignoux and M. Simon. \emph{Hydrodynamic limit for an open facilitated exclusion process with slow and fast boundaries}. Comm. Math. Phys., 407:50, 2026.


\bibitem{ERSS24}C. Erignoux, A. Roget, A. Shapira and M. Simon. \emph{Hydrodynamic behavior near dynamical criticality of a facilitated conservative lattice gas}. Phys. Rev. E, 110:L032101, 2024.

\bibitem{Har74}T. E. Harris. \emph{Contact Interactions on a Lattice}. Ann. Probab., 2:969--988, 1974.

\bibitem{Rol20}L. T. Rolla. \emph{Activated Random Walks on $\mathbb Z^d$}. Probab. Surv., 17:478--544, 2020.

\bibitem{RS12}L. T. Rolla and V. Sidoravicius. \emph{Absorbing-state phase transition for driven-dissipative stochastic dynamics on $\mathbb Z$}. Invent. Math., 188:127--150, 2012.

\bibitem{ST17}V. Sidoravicius and A. Teixeira. \emph{Absorbing-state transition for Stochastic Sandpiles and Activated Random Walks}. Electron. J. Probab., 22, no.~33, 1--35, 2017.

\bibitem{VDMZ00}A. Vespignani, R. Dickman, M. A. Mu\~noz and S. Zapperi. \emph{Absorbing-state phase transitions in fixed-energy sandpiles}. Phys. Rev. E, 62:4564--4582, 2000.



\end{thebibliography}
\end{document}